\newtheorem{thm}{Theorem}[section]
\newtheorem{lem}[thm]{Lemma}
\newtheorem{cor}[thm]{Corollary}
\newtheorem{prop}[thm]{Proposition}
\theoremstyle{definition}
\newtheorem{defn}[thm]{Definition}
\newtheorem{defns}[thm]{Definitions}
\newtheorem{remark}{Remark}
\newtheorem{remarks}{Remarks}
\newcounter{picture}
\newcommand{\FF}{{\Bbb F}}
\newcommand{\KK}{{\Bbb K}}
\newcommand{\NN}{{\Bbb N}}
\newcommand{\QQ}{{\Bbb Q}}
\newcommand{\RR}{{\Bbb R}}
\newcommand{\ZZ}{{\Bbb Z}}
\newcommand{\cA}{{\mathcal A}}
\newcommand{\cC}{{\mathcal C}}
\newcommand{\cE}{{\mathcal E}}
\newcommand{\cF}{{\mathcal F}}
\newcommand{\cG}{{\mathcal G}}
\newcommand{\cK}{{\mathcal K}}
\newcommand{\cL}{{\mathcal L}}
\newcommand{\cO}{{\mathcal O}}
\newcommand{\cP}{{\mathcal P}}
\newcommand{\cR}{{\mathcal R}}
\newcommand{\cS}{{\mathcal S}}
\newcommand{\cT}{{\mathcal T}}
\newcommand{\cW}{{\mathcal W}}
\newcommand{\D}{{\Delta}}
\newcommand{\G}{{\Gamma}}
\newcommand{\La}{{\Lambda}}
\newcommand{\Om}{{\Omega}}
\newcommand{\Si}{{\Sigma}}
\newcommand{\s}{{\sigma}}
\newcommand{\w}{{\omega}}
\newcommand{\wa}{{\varpi}}
\newcommand{\aut}{{\text{\rm Aut}}}
\newcommand{\traut}{{\text{\rm Aut}_{\text{\rm tr}}(\D)}}
\newcommand{\tps}{{\text{\rm{III}}}_{1/p^2}}
\newcommand{\tqs}{{\text{\rm{III}}}_{1/q^2}}
\newcommand{\btqs}{%
{\text{\rm{\bf{III}}}}_{{\text{\bf{1}}}/{\text{\bf{q}}}^{\text{\bf{2}}}}}
\newcommand{\pgl}{{\text{\rm{PGL}}}}
\newcommand{\gl}{{\text{\rm{GL}}}}
\begin{document}

\title{Triangle Buildings and  Actions of Type $\btqs$}
\author{Jacqui Ramagge and Guyan Robertson}
\address{Mathematics Department, University of Newcastle, Callaghan, NSW 2308,
Australia}
\email{guyan@maths.newcastle.edu.au}
\thanks{This research was funded by the Australian Research Council.}
\maketitle

\begin{abstract}
We study certain group actions on triangle buildings and their boundaries and
some von Neumann algebras which can be constructed from them.  In
particular, for
buildings of order $q\geq 3$ certain natural actions on the boundary are
hyperfinite of type $\tqs$.
\end{abstract}

\section{Introduction}

We begin with a triangle building $\D$ whose one-skeleton is the Cayley
graph of a
group $\G$, so that $\G$ acts simply-transitively on the vertices of $\D$
by left
multiplication.  The group $\G$ (often referred to as an $\tilde{A}_2$
group) has a
relatively simple combinatorial structure, yet also has Kazhdan's
property~(T)~\cite{CMS}. It is interesting to note that not all such groups
$\G$ can
be embedded naturally as lattices in $\pgl\left(3,\KK\right)$ where $\KK$ is a
local field~\cite[II \S8]{CMSZ}. As a tool in our studies we introduce the
notion of a
periodic apartment in $\D$ . We prove several useful facts about periodic
apartments and the periodic limit points they define on the boundary,
$\Om$, of $\D$. The boundary $\Om$ is a totally disconnected compact Hausdorff
space and there is a family of mutually absolutely continuous Borel probability
measures $\{\nu_v\}$ on $\Om$ indexed by the vertices of $\D$. We show that
periodic boundary points form a dense subset of the boundary of trivial measure
with respect to this class. We also prove that the only boundary points
stabilized
by the action of $\G$ are the periodic limit points. This enables us to
deduce that
the action of $\G$ on $\Om$ is free. By analogy with the tree case we show
that the
action of $\G$ on $\Om$ is also ergodic, thus establishing that
$L^\infty(\Om)\rtimes\G$ is a factor.  Since the action of $\G$ is amenable,
$L^\infty(\Om)\rtimes\G$ is a hyperfinite factor. In Theorem~\ref{main theorem
for Gamma} we prove that, if $q\geq 3$, $L^\infty(\Om)\rtimes\G$ is in fact the
hyperfinite factor of type~$\tqs$. The corresponding result for homogeneous
trees was proved  in~\cite{RR}.

In \S\ref{section 5}, we consider the action of some other groups $G$ acting on
triangle buildings and we show that, under certain conditions, the action is of
type~$\tqs$. In particular, Theorem~\ref{PGLFq factor} asserts that if
$\Om$ is the
boundary of the building associated to $\pgl\left(3,\FF_q((X))\right)$, then the
action of $\pgl\left(3,\FF_q(X)\right)$ is of type~$\tqs$.

The work on periodic apartments was initially motivated by some of the material
in~\cite{MSZ}, where the authors first met the notion of  doubly periodic
apartments. In~\cite{Moz}, S.~Mozes had previously proved the abundance of
doubly periodic apartments in a different context. He used them to obtain deep
results on dynamical systems. We generalize the notion of doubly periodic
apartments introduced in that paper to that of periodic apartments and prove
some surprising properties these generalizations possess. The proof of
Theorem~\ref{existence} appeared in~\cite{MSZ}, but is included for the
insight it
provides, and  Corollary~\ref{cor 2} appeared as a lemma, without reference to a
result analogous to Lemma~\ref{rigidity}.

\subsection{Triangle Buildings}

We refer the reader to~\cite{Brown} and~\cite{Ronan} for more details of the
following facts on buildings.

A {\bf triangle building} is a thick affine building $\D$ of
type~$\tilde{A}_2$. Thus
$\D$ is a simplicial complex of rank 2 and consists of vertices, edges and
triangles. We call the maximal simplices, i.e.\ the triangles, {\bf
chambers}. The
vertices in $\D$ have one of three types and every chamber contains
precisely one
vertex of each type. Thus there is a type map $\tau$ defined on the vertices of
$\D$ such that $\tau(v)\in\ZZ/3\ZZ$ for each vertex $v\in\D$. An automorphism
$g$ of $\D$ is said to be {\bf type-rotating} if there exists $i\in\{
0,1,2\}$ such that
$\tau(gv)=\tau(v)+i$ for all vertices $v\in\D$, and is said to be {\bf
type-preserving} if $i=0$. We denote by $\traut$ the group of all
type-rotating automorphisms of $\D$. We shall assume that each edge lies on a
finite number of chambers, in which case each edge lies on precisely
$q+1$ chambers for some integer $q\geq 2$ and we call $q$ the {\bf order}
of $\D$.
There is a well-defined
$W$-valued distance function on the simplices of $\D$ where $W$ is the affine
Coxeter group of type~$\tilde{A}_2$. The length function $|\cdot |$ on $W$
enables
us to define an integer-valued distance function $d$ on~$\D$. Given a simplex
$\sigma\in\D$, the {\bf residue}, or {\bf star} determined by $\sigma$ is
the set of
all simplices $\rho$ satisfying $d(\sigma,\rho)=1$.

An {\bf apartment} $\cA$ in $\D$ is a subcomplex isomorphic to the Coxeter
complex of type~$\tilde{A}_2$. Hence each apartment $\cA$ can be realized as a
Euclidean plane tesselated by equilateral triangles. A {\bf root} in $\D$
is a root
in any apartment $\cA$ of $\D$. That is to say, a root is a half plane in a
geometric
realization of $\cA$. A {\bf wall} in $\D$ is a wall in any apartment $\cA$
and a {\bf
sector} is a simplicial cone in some apartment. Two sectors are said to be {\bf
equivalent}, or {\bf parallel}, if they contain a common subsector.

Given any vertex $v$ and any chamber $C$ in a common apartment $\cA$, their
convex hull in $\cA$ must in fact be contained in every apartment which contains
both $v$ and $C$. We note that given a vertex $v\in\cA$ we can decompose $\cA$
into six sectors emanating from, or {\bf based at}, $v$. Consider two such
sectors
$\cS_1, \cS_2$ which are opposite each other, as in Figure~\ref{opposite
sectors}.
The convex hull of $\cS_1$ and $\cS_2$ is all of $\cA$, so that $\cS_1$ and
$\cS_2$
uniquely determine~$\cA$.

\refstepcounter{picture}
\begin{figure}[htbp]
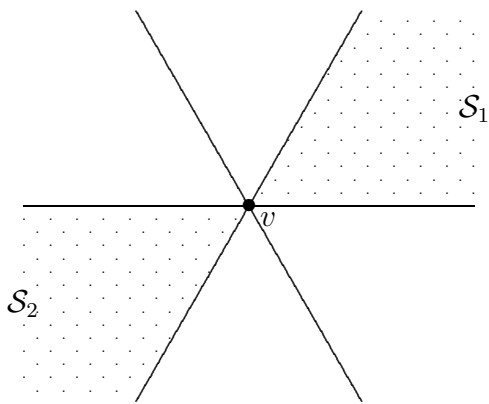
\label{opposite sectors}
{}\hfill
\beginpicture
\setcoordinatesystem units <0.5cm,0.866cm>    
\setplotarea x from -6 to 6, y from -4 to 4         
\put {$\bullet$} at 0 0
\put {$v$}             [t l]  at    0.3   -0.1
\put {$\cS_1$}   at    6 1.5
\put {$\cS_2$}  at    -6  -1.5
\putrule from -6    0     to  6    0
\setlinear
\plot -3  3   3 -3 /
\plot -3 -3   3  3 /
\vshade -6 -3 0 <z,z,z,z> -3 -3 0 <z,z,z,z> 0 0 0 <z,z,z,z> 3 0 3
<z,z,z,z> 6 0 3 /
\endpicture
\hfill{}
\caption{Opposite sectors in an apartment.}
\end{figure}

\begin{remark}\label{uncountable apartments}
There are an uncountable number of apartments containing any given vertex
$v\in\D$. To see this, suppose that $\cR$ is any root containing~$v$.
Without loss
of generality, suppose $v$ is on the outside wall $\cW$ of $\cR$ as in
Figure~\ref{outside wall}.

\refstepcounter{picture}
\begin{figure}[htbp]
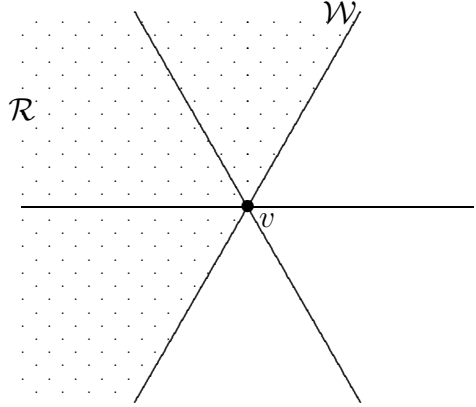
\label{outside wall}
{}\hfill
\beginpicture
\setcoordinatesystem units <0.5cm,0.866cm>    
\setplotarea x from -6 to 6, y from -4 to 4         
\put {$\bullet$} at 0 0
\put {$\cW$}  [r]         at 2.9  3
\put {$\cR$} at -6  1.5
\put {$v$}             [t l]  at    0.3   -0.1
\putrule from -6    0     to  6    0
\setlinear
\plot -3  3   3 -3 /
\plot -3 -3   3  3 /
\vshade -6 -3 3 <z,z,z,z> -3 -3 3 <z,z,z,z> 0 0 3 <z,z,z,z> 3 3 3   /
\endpicture
\hfill{}
\caption{A root $\cR$ containing the vertex $v$.}
\end{figure}

Consider any edge on $\cW$, say $E_1$. By thickness there exist at least two
distinct chambers, $C_1$ and $C_2$, incident on $E_1$ which are not in $\cR$.
Adjoining either of these chambers will uniquely determine new roots $\cR_i$
which are the convex hulls of $C_i$ and $\cR$. Note that $\cR_1\neq\cR_2$ since
they differ in at least $C_i$, and hence everywhere on  $\cR_i\setminus\cR$. Let
$\cW_i$ be the new outside wall of $\cR_i$. See Figure~\ref{apartment
construction}.

\refstepcounter{picture}
\begin{figure}[htbp]\label{apartment construction}
{}\hfill
\beginpicture
\setcoordinatesystem units <0.5cm,0.866cm>    
\setplotarea x from -6 to 6, y from -4 to 4         
\put {$\bullet$}     at 0 0
\put {$\bullet$}     at 1.5 1.5
\put {$\cW$}  [r]         at 2.9  3
\put {$\cW_i$}  [l]         at 6.1  3
\put {$\cR$}           at -6  1.5
\put {$\cR_i\setminus\cR$}    [b]       at -1.2  -3
\put {$v$}     [b r]   at    -0.4   0.1
\put {$E_1$} [b r]  at    0.7   0.8
\put {$C_i$}           at    1.5   0.5
\putrule from -6    0     to  6    0
\arrow <0.2cm> [.2,.67] from 0.6 0.6 to 0.86 0.86
\setlinear
\plot -3  3   3 -3 /
\plot -3 -3   3  3 /
\plot  0 -3   6  3 /
\plot 3 0   1.5  1.5 /
\vshade -3 -3 -3 <z,z,z,z> 0 -3 0 <z,z,z,z> 3 0 3 <z,z,z,z> 6 3 3   /
\endpicture
\hfill{}
\caption{Extending the root $\cR$ to a root $\cR_i$.}
\end{figure}

Now repeat the process for each root $\cR_i$. Continuing in this manner,
we obtain apartments containing the vertex $v\in\D$. The contractability of
$\D$ implies that these apartments intersect only in $\cR$. Using this method we
have constructed at least as many apartments containing $v$ as there are binary
expansions.  We therefore conclude that the number of apartments containing $v$
is uncountable.
\end{remark}

\subsection{The Boundary and its Topology}\label{boundary and topology}

Let $\Om$ be the set of equivalence classes of sectors in $\D$. Given any
$\w\in\Om$ and any fixed vertex $v$ there is a unique sector $\cS_v(\w)$ in
the equivalence class $\w$ based at $v$. The unique chamber $C\in\cS_v(\w)$
containing $v$ is called the {\bf base chamber} of $\cS_v(\w)$. Suppose that
$\cS_v(\w)$ has base chamber $C$. For each vertex $u\in\cS$, we denote the
convex hull of $u$ and $C$ by $I_{v}^u(\w)$. We illustrate such a
convex hull $I_{v}^u(\w)$ as a shaded region in Figure~\ref{initial segment}.

\refstepcounter{picture}
\begin{figure}[htbp]\label{initial segment}
{}\hfill
\beginpicture
\setcoordinatesystem units <0.5cm,0.866cm>    
\setplotarea x from -6 to 6, y from -1 to 6         
\put {$\bullet$} at 0 0
\put {$\bullet$} at 1 5
\put {$v$}                 [t]  at    0  -0.2
\put {$u$}                 [b]  at    1  5.2
\put {$\cS_v(\w)$} [b]  at   -1  5.5
\put {$I_{v}^u(\w)$}      at    0  2
\put {$C$}           at    0  0.6
\putrule from -1    1     to  1    1
\setlinear
\plot -6  6   0 0   6 6 /
\plot -2  2   1 5   3 3 /
\vshade -2 2 2 <z,z,z,z> 0 0 4 <z,z,z,z> 1 1 5 <z,z,z,z> 3 3 3 /
\endpicture
\hfill{}
\caption{Region $I_{v}^u(\w)$ of $\cS_v(\w)$.}
\end{figure}

$\Om$ is in fact the set of chambers of the spherical building $\D^{\infty}$ at
infinity associated to $\D$. We shall refer to $\Om$ as the {\bf boundary}
of $\D$.
This boundary is a totally disconnected compact Hausdorff space with a base for
the topology consisting of sets of the form
\[
U_{v}^u(\w) = \{ \w'\in\Om : I_{v}^u(\w)\subseteq\cS_v(\w') \}
\]
where $v$ is an arbitrary vertex, $\w\in\Om$ and $u$ isany vertex in
$\cS_v(\w)$~\cite{CMS}. Note that
$U_{v}^u(\w' ) = U_{v}^u(\w)$ whenever $\w'\in U_{v}^u(\w)$. The sets
$U_{v}^u(\w)$
form a basis of open and closed sets for the topology on $\Om$. This topology is
independent of the vertex $v$~\cite[Lemma~2.5]{CMS}.

\subsection{Triangle Groups}\label{intro to triangle groups}

Let $(P,L)$ be a finite projective plane of order $q$. Thus there are
$|P|=q^2+q+1$
points and $|L|=q^2+q+1$ lines with each point lying on $q+1$ lines and
each line
having $q+1$ points lying on it. Let $\lambda : P\rightarrow L$ be a point-line
correspondence, i.e.\ a bijection between the elements of $P$ and those of $L$.

Let $\cT$ be a set of triples $\left\{ (x,y,z) : x,y,z\in P\right\}$
satisfying the
following properties:
\begin{enumerate}
\item For all $x,y\in P$, $(x,y,z)\in\cT$ for some $z\in P \Leftrightarrow
y$ and
$\lambda(x)$ are incident.
\item $(x,y,z)\in\cT \Rightarrow (y,z,x)\in\cT$.
\item For all $x,y\in P$, $(x,y,z)\in\cT$ for at most one $z\in P$.
\end{enumerate}

Such a set $\cT$ is called a {\bf triangle presentation}. The notion of triangle
presentations was introduced and developed in~\cite{CMSZ} and the reader is
referred to these papers for proofs of statements regarding triangle
presentations quoted below without reference.

Let $\{ a_x : x\in P\}$ be a set of $q^2+q+1$ distinct letters and define a
multiplicative group $\G$, whose identity shall be denoted by $e$, with the
following presentation:
\[
\G= \left< a_x : a_x a_y a_z =e \text{ for all } (x,y,z)\in\cT\right> .
\]
The Cayley graph of $\G$ constructed via right multiplication with respect
to the
generators $a_x, x\in P$ and their inverses is in fact the skeleton of a
triangle
building $\D_{\cT}$ whose chambers can be identified with the sets $\{ g,
ga_x^{-1},
ga_y\}$ for $g\in\G$ and where $(x,y,z)\in\cT$ for some $z\in P$.  Furthermore,
$\G$ acts simply transitively by left multiplication on the vertices of
$\D_{\cT}$ in
a type-rotating manner. Henceforth we assume that $\D=\D_{\cT}$ for some
triangle presentation $\cT$.

Given a sector $\cS(\w)\subset\D$, left multiplication of every vertex in
$\cS(\w)$ by an element $g\in\G$ defines an action of $\G$ on $\Om$. We
refer the
reader to~\cite{CMS} for a proof that this action is in fact well-defined.

The boundary $\Om$ can be expressed as a union of $(q^2+q+1)(q+1)$ disjoint sets
since, for any fixed vertex $v$,
\[
\Om = \bigcup_{ \text{chambers } C  \text{with } v\in C}\Om_v^C
\]
where $\Om_v^C$ denotes the set of $\w\in\Om$ whose representative sector
based at $v$ has base chamber $C$.

For brevity we will denote the region $I_{e}^u(\w)$ simply by $I^u(\w)$.

\subsection{Labelling Apartments with Elements of $\G$}\label{labelling
apartments}

Suppose we are given a triangle group $\G$ and its corresponding triangle
building $\D$. Each of the edges in $\D$ can be labelled by a generator or
an inverse
generator of $\G$. By providing each edge with an orientation, it suffices
to label
the edges with generators of $\G$. For each $g\in\G$ and each $x\in P$, we
label the
edge $g\rightarrow ga_x$ in $\D$ by $a_x$ or more briefly by $x$ if there is no
likelihood of confusion.

Suppose we are given an apartment $\cA$ and a sector $\cS\subset\cA$ with base
vertex $v$. Each sector wall of $\cS$ is half of a wall in $\cA$.  Each
such wall
consists entirely of edges with the same orientation. Let $\cW_+$ be the wall
containing the sector panel whose orientation emanates from $v$, and denote the
other sector wall of $\cS$ by $\cW_-$. The walls $\cW_+$ and $\cW_-$ and their
translates in $\cA$ form a lattice in $\cA$. Thus each vertex in $\cA$ can
be given a
coordinate with respect to this lattice where the sector panels of $\cS$
are taken
to be in the positive direction and the coordinate in the direction of
$\cW_+$ is
given first. Hence $\cA$ can be determined via its vertices as
$\cA=\left(a_{i,j}\right)_{i,j\in\ZZ}$ where each $a_{i,j}\in\G$. See
Figure~\ref{labelling} for an example.

\refstepcounter{picture}
\begin{figure}[htbp]
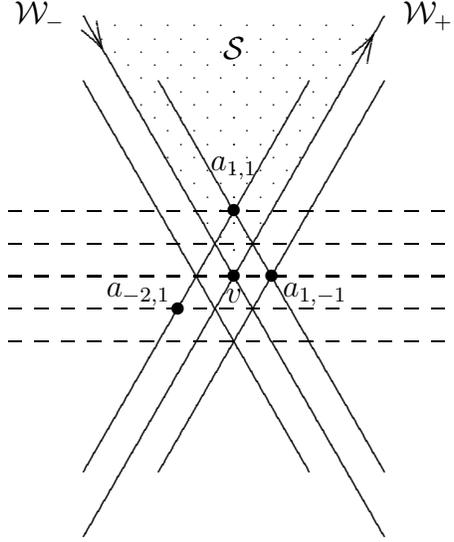
\label{labelling}
{}\hfill
\beginpicture
\setcoordinatesystem units <0.5cm,0.866cm>    
\setplotarea x from -6 to 6, y from -4 to 4         
\put {$\bullet$} at 0 0
\put {$\bullet$} at 0 1
\put {$\bullet$} at 1 0
\put {$\bullet$} at -1.5 -0.5
\put {$v$}             [t]  at    0   -0.2
\put {$a_{1,1}$}   [b]  at    0  1.5
\put {$a_{1,-1}$} [t l]  at    1.3   -0.1
\put {$a_{-2,1}$} [b r]  at    -1.7   -0.45
\put {$\cS$}   at    0 3.5
\put {$\cW_+$}  [l] at    4.5  4
\put {$\cW_-$}  [r] at   -4.5  4
\setlinear
\plot -4  3   2 -3 /
\plot -4  4   4 -4 /
\plot -2  3   4 -3 /
\plot -4 -3   2  3 /
\plot -4 -4   4  4 /
\plot -2 -3   4  3 /
\arrow <10pt> [.3,.67] from 3.5 3.5 to 3.7 3.7
\arrow <10pt> [.3,.67] from -3.7 3.7 to -3.5 3.5
\setdashes
\putrule from -6    0     to  6    0
\putrule from -6    0.5     to  6    0.5
\putrule from -6    1     to  6    1
\putrule from -6   -1     to  6   -1
\putrule from -6   -0.5     to  6   -0.5
\vshade -4 4 4 <z,z,z,z> 0 0 4 <z,z,z,z> 4 4 4 /
\endpicture
\hfill{}
\caption{labelling of an apartment with respect to a sector $\cS$.}
\end{figure}

We note that this induces a labelling of $\cS$ which is independent of the
particular apartment $\cA$ provided that $\cS\subset\cA$.

\subsection{Borel Probability Measures on $\Om$}\label{measure space omega}

We refer the reader to~\cite{CM} and~\cite{CMS} for details of the results
quoted
in this section. For each vertex $v\in\D$, we denote by $V^{m,n}_v$ the set of
vertices $u\in\D$ for which there exists a sector $\cS_v$ based at $v$ such that
$u=a_{m,n}$ with respect to the labelling of $\cS_v$ described
in~\S\ref{labelling
apartments}. Thus $d(v,u)=m+n$ for all $u\in V^{m,n}_v$. The cardinalities
$N_{m,n}=\left| V^{m,n}_v \right|$ are independent of $v$ and are given by
\[
N_{m,n} =
\begin{cases}
(q^2 +q+1)(q^2+q) q^{2(m+n-2)} & \text{if }m,n\geq 1 , \\
(q^2 +q+1)q^{2(m-1)} & \text{if }m\geq 1 , n=0 , \\
(q^2 +q+1)q^{2(n-1)} & \text{if }n\geq 1 , m=0 , \\
1 & \text{if }m=0=n.
\end{cases}
\]

For every pair of vertices $v,u\in\D$ we define
\[
\Om_v^u = \left\{ \w\in\Om : u\in\cS_v(\w)\right\}.
\]
Thus, in terms of the open sets described in~\S\ref{boundary and topology},
\[
\Om_v^u = \bigcup_{\w\in\Om, u\in\cS_v(\w) }U_v^u(\w)
\]
although there is only ever one distinct set $U_v^u(\w)$ contributing to
this union
unless $v$ and $u$ lie on a common wall in $\D$. If $v$ and $u$ lie on a
common wall
in $\D$, there are precisely $q+1$ distinct and disjoint sets of the form
$U_v^u(\w)$ in the above union.

It was noted in~\cite{CMS} that, for each vertex $v\in\D$, there exists a
natural
Borel probability measure $\nu_v$ on $\Om$ which assigns equal measure to each
of the $N_{m,n}$ disjoint sets $\Om_v^u$ for $u\in V^{m,n}_v$. Furthermore,
given
any two vertices $u,v\in\D$, the measures $\nu_u$ and $\nu_v$ are mutually
absolutely continuous~\cite[Lemma~2.5]{CMS}. We define $\nu=\nu_e$ for
brevity and denote $L^\infty(\Om,\nu)$ by $L^\infty(\Om)$.

For any $g\in\G$ and any vertex $v\in\D$, $\nu_{gv}=g\nu_v$ in the sense that
$\nu_{gv}(E)=\nu_v(g^{-1}E)$ for any Borel set $E\subseteq\Om$. Since $\nu$ is
therefore quasi-invariant under the action of $\G$, the von Neumann algebra
$L^\infty(\Om)\rtimes\G$ is well-defined.

\section{Periodicity in Buildings}

\subsection{Periodic Apartments}
Suppose that we have an apartment $\cA$ labelled with respect to some sector
$\cS\subset\cA$ as described in \S\ref{labelling apartments}.

\begin{defns}
We define the {\bf lattice of periodicity points}, $\cL$, of $\cA$ by
\[
\cL=\{ (r,s)\in\ZZ^2 : a_{i,j}^{-1}a_{k,l}=a_{i+r,j+s}^{-1}a_{k+r,l+s} \quad
\text{ for all }  i,j,k,l\in\ZZ \}.
\]
To appreciate the meaning of $\cL$, note that there is a natural action by
translation of $\ZZ^2$ on the apartment $\cA$ given by
$(r,s)a_{i,j}=a_{i+r,j+s}$.
Suppose $a_{i,j}$ and $a_{k,l}$ are adjacent vertices. The element
$a_{i,j}^{-1}a_{k,l}$ defines a labelled edge from $a_{i,j}$ to $a_{k,l}$. If
$(r,s)\in\cL$ then the element $a_{i+r,j+s}^{-1}a_{k+r,l+s}$ defines exactly the
same labelling on the edge from $a_{i+r,j+s}$ to $a_{k+r,l+s}$. Thus a translation by
$(r,s)\in\cL$ leaves the edge labelling of the apartment invariant.

Note that $\cL$ is a subgroup of $\ZZ^2$, so that we must have either $\cL=\{
(0,0)\}$, $\cL\cong \ZZ$ or $\cL\cong\ZZ^2$. We call the elements
$(r,s)\in\cL$ the
{\bf periodicity points} of $\cA$.

We say that $\cA$ is {\bf periodic} if $\cL$ is non-trivial. We distinguish
between
the cases $\cL\cong \ZZ$ and $\cL\cong\ZZ^2$ by saying that $\cA$ is {\bf singly
periodic} or {\bf doubly periodic} respectively. We say that $\cA$ is
{\bf $(r,s)$-periodic} if $(r,s)\in\cL$ regardless of whether $\cA$ is
singly or doubly
periodic. In geometric terms this means that the labelling of the directed edges
of $\cA$ by generators of $\G$ has a translational symmetry in the $(r,s)$
direction. For brevity we say $\cA$ is {\bf $\left\{
(r,s),(t,u)\right\}$-periodic} if
$\cA$ is both $(r,s)$-periodic and $(t,u)$-periodic.

We note that the set of periodic apartments in $\D$ is $\G$-invariant since if
$\cA$ is periodic then $g\cA$ is necessarily periodic and has the same lattice
of periodicity points for all $g\in\G$.

Given a periodic apartment $\cA$ with periodicity lattice $\cL$, define
$m\in\ZZ$
by
\[
m =\min_{r,s\in\cL, (r,s)\neq (0,0)}
\left| a_{i,j}^{-1} a_{i+r,j+s}  \right|
\]
for any $i,j\in\ZZ$. Such an integer exists and is positive since the
length of a word
in
$\G$ is a positive integer and $\cL$ is non-trivial since we are assuming
periodicity. We say that $\cA$ has {\bf minimal period} $m$.
\end{defns}

\subsection{Rigidly Periodic Apartments}
We introduce some periodic apartments whose behaviour is somewhat special.
\begin{defn} Call an apartment $\cA$ {\bf rigidly periodic} if
either
\begin{itemize}
\item $\cA$ is doubly periodic, or
\item $\cA$ is $(r,s)$-periodic with $r,s\neq 0$ and $s \neq -r$.
\end{itemize}
\end{defn}
Thus singly periodic apartments whose vertices $a_{r,s}, (r,s)\in\cL$
did not lie entirely along a single wall in the apartment would be rigidly
periodic.
In fact, it turns out that rigid and double periodicity are equivalent
notions,  see Lemma~\ref{rigid implies doubly periodic}.

Note that if $\cA$ is rigidly periodic then so is $g\cA$ for all $g\in\G$
so that the
set of rigidly periodic apartments is $\G$-invariant. Since $\G$ is
countable, there
can only be a countable number of rigidly periodic apartments containing a fixed
vertex $v\in\D$. Recall that there are an uncountable number of apartments
containing $v$ by Remark~\ref{uncountable apartments}.  We deduce that, for
any vertex $v\in\D$, there exist apartments containing $v$ which are not rigidly
periodic.

The existence of rigidly periodic apartments with arbitrarily large minimal
periodicity was established initially by S.\ Mozes~\cite[Theorem~2.2']{Moz} who
showed that periodic apartments are dense in the case where $\G$ embeds as a
cocompact lattice in a strongly transitive group of automorphisms of $\D$. The
existence of rigidly periodic apartments in the context of $\tilde{A}_2$
groups was  established by A.\ M.\ Mantero, T.\ Steger, and A.\ Zappa
in~\cite[Lemma 2.4 and Proposition 2.5]{MSZ}. We outline their proof since it is
constructive and therefore more useful than the mere existence result and we
refer the reader to~\cite{MSZ} for the full details.

\begin{thm}\label{existence}
There exist rigidly periodic apartments with arbitrarily large minimal period.
\end{thm}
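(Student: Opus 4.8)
The plan is to prove the apparently stronger statement that for every $M$ there is a \emph{doubly} periodic apartment whose minimal period exceeds $M$; since a doubly periodic apartment is rigidly periodic directly from the first clause of the definition, this yields the theorem with no appeal to Lemma~\ref{rigid implies doubly periodic} (and in particular without running that implication backwards). First I would reformulate periodicity group-theoretically. Choosing the sector based at $e$ so that $a_{0,0}=e$, the condition $(r,s)\in\cL$ reads $a_{r,s}\,a_{i,j}=a_{i+r,j+s}$ for all $i,j$; that is, the single vertex $a_{r,s}\in\G$ stabilises $\cA$ and acts on the Euclidean flat $\cA$ as the translation by the vector $(r,s)$. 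Moreover $|a_{i,j}^{-1}a_{i+r,j+s}|$ is just the distance in $\cA$ between the two vertices, which is translation invariant and hence depends only on $(r,s)$; so the minimal period is the length, in the building metric, of the shortest nonzero vector of $\cL$ — the systole of $\cL$. Thus the target is, for each $M$, an apartment whose periodicity lattice is of rank two with systole $\ge M$.

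The construction rests on two structural inputs. First, every directed edge of $\D$ carries a label from the finite set $\{a_x:x\in P\}$ together with its inverses, and admissibility of a labelling is a purely local condition: each chamber must carry a triple coming from $\cT$. Consequently, for fixed dimensions there are only finitely many admissibly labelled parallelogram patches. The idea, following Mantero--Steger--Zappa, is to produce a large fundamental patch $P$ — an admissibly labelled parallelogram — whose two pairs of opposite sides carry matching labels, so that $P$ descends to an admissibly labelled torus. The second input, simple transitivity of $\G$ on vertices, then realises the two side-identifications as commuting elements $g_1,g_2\in\G$, each carrying $P$ onto the neighbouring copy; tessellating the plane by the $\langle g_1,g_2\rangle$-orbit of $P$ produces an apartment $\cA$ whose lattice $\cL$ contains the rank-two lattice spanned by the two translation vectors. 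Hence $\cA$ is doubly periodic, and so rigidly periodic.

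To force the systole above $M$ one requires in addition that the fundamental patch have diameter exceeding $M$ and be \emph{internally aperiodic}, meaning that no translation by a vector of length less than $M$ preserves the labelling of its central $M$-ball. Granting this, any periodicity vector $(r,s)\in\cL$ with $|a_{r,s}|<M$ would restrict to precisely such a forbidden self-translation of the core, a contradiction; so the minimal period is at least $M$. That generic apartments carry no short periodicity is consistent with Remark~\ref{uncountable apartments}, which shows only countably many apartments through a given vertex can be rigidly periodic, so short-period behaviour is exceptional and an aperiodic core of any prescribed size is available; the size of the eventual period is thereby put in by hand.

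The main obstacle is the closing-up step: producing the matching side-identifications, equivalently the commuting translations $g_1,g_2$ with a common invariant flat and a large, internally aperiodic fundamental domain. This cannot be extracted from compactness alone, since the admissibility constraints form a two-dimensional condition of finite type and such conditions may force aperiodicity (as in the theory of aperiodic tilings); the mere existence of apartments therefore does not deliver periodic ones. It is exactly here that the algebraic structure of $\G$ — the triangle presentation $\cT$ and the simple transitivity of the action — must be exploited to guarantee that a large admissibly labelled patch admits a genuinely periodic completion, and I would follow the explicit combinatorial construction of~\cite{MSZ} for this step. The remaining care is to ensure that the minimal period, taken over all nonzero vectors of $\cL$ rather than merely the constructed generators $g_1,g_2$, is large; the internal aperiodicity of the fundamental patch is precisely the device that guarantees it.
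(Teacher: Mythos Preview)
Your plan aims higher than needed and, in doing so, misidentifies what \cite{MSZ} actually supplies. You set out to build a \emph{doubly} periodic apartment by closing up a labelled parallelogram into a torus, which would require producing two independent commuting translations. The paper's proof is simpler: it invokes only the second clause of the definition of rigid periodicity, constructing an apartment that is $(m+2,m+2)$-periodic for a single direction. The key geometric point you are missing is that a single translation along the diagonal of a sector already determines the whole apartment, because the convex hull of the tessellated strip of diamonds $\{g^n R': n\in\ZZ\}$ is all of $\cA'$. Thus only a one-dimensional closing-up is required.

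Concretely, the \cite{MSZ} input is this: given a diamond region $R$ cut from a non-periodic apartment (with base chamber $C$ at $e$ and opposite chamber $C'$ at $v=a_{m,m}$), one can find chambers $D,D'$ opposite $C,C'$ in the residues of $e,v$ respectively, whose edges opposite those vertices carry the \emph{same} label $b$, and whose remaining labels differ so that no cancellation occurs. Gluing $D,D'$ to $R$ yields a region $R'$ whose top and bottom edges match, and iterating $R'$ by a single element $u=a_{m+2,m+2}$ gives the periodic strip. That is a one-directional matching; it does not produce the second independent direction your torus picture demands, so your sentence ``I would follow the explicit combinatorial construction of \cite{MSZ} for this step'' does not close the gap you have set yourself.

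Your ``internal aperiodicity'' device is also left unanchored. In the paper this is handled directly: one starts from a non-periodic apartment, and if necessary perturbs the choice so that the seed diamond $R$ is not contained in any rigidly periodic apartment of minimal period less than $m$. Since $R$ sits inside the eventual $\cA'$, this immediately forces the minimal period of $\cA'$ to be at least $m$; no separate systole argument over all of $\cL$ is required.
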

\begin{proof}
Suppose we wish to construct a rigidly periodic apartment with minimal period
greater than a fixed positive integer $m$.  We begin by fixing a non-periodic
apartment $\cA$ containing $e$ which is labelled with respect to some sector
$\cS$ based at $e$. Thus $\cA=(a_{i,j})$ with $a_{0,0} =e$. Denote by $C$
the base
chamber of $\cS$. Let $v=a_{m,m}\in\cA$ and denote by $R$ the convex hull of
$C$ and $v$ in $\cA$. $R$ is a diamond with the vertices $e$ and $v$ at opposite
corners. Let $C'$ be the unique chamber in $R$ containing $v$.

Suppose that the edge on $C$ opposite $e$ is labelled $a$, andthat the edge on
$C'$ opposite $v$ is labelled $a'$, see Figure~\ref{convex hull of e and v}.

\refstepcounter{picture}
\begin{figure}[htbp]
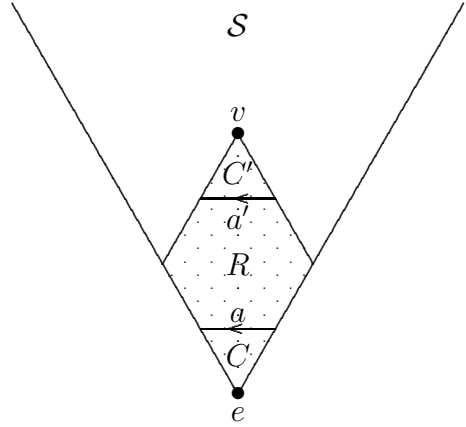
\label{convex hull of e and v}
{}\hfill
\beginpicture
\setcoordinatesystem units <0.5cm,0.866cm>    
\setplotarea x from -6 to 6, y from -1 to 6         
\put {$\bullet$} at 0 0
\put {$\bullet$} at 0 4
\put {$e$}     [t]  at    0  -0.2
\put {$v$}     [b] at    0  4.2
\put {$\cS$} [b]  at    0  5.5
\put {$R$}           at    0  2
\put {$C$}           at    0  0.6
\put {$C'$}          at    0  3.4
\put {$a$}    [b]  at    0  1.1
\put {$a'$}   [t]   at    0  2.9
\putrule from -1    1     to  1    1
\putrule from -1    3     to  1    3
\setlinear
\plot -6  6   0 0   6 6 /
\plot -2  2   0 4   2 2 /
\vshade -2 2 2 <z,z,z,z> 0 0 4 <z,z,z,z>  2 2 2 /
\arrow <5pt> [.3,.67] from 0.1 1 to -0.25 1
\arrow <5pt> [.3,.67] from 0.25 3 to -0.1 3
\endpicture
\hfill{}
\caption{Convex hull $R$ of $C$ and $v$.}
\end{figure}

By altering our choice of non-periodic apartment $\cA$ if necessary, we may
assume that $a\neq a'$ and that $R$ is not contained in a rigidly periodic
apartment with minimal period less than $m$.

Results in~\cite{MSZ} prove that there exist chambers $D$ and $D'$ satisfying
the following conditions:
\begin{enumerate}
\item  $D$ is opposite $C$ in the residue determined by~$e$.
\item $D'$ is opposite $C'$ in the residue determined by~$v$.
\item $D$ corresponds to a relation $bcd=e$ and the edge labelled $b$ is
opposite
the vertex~$e$.
\item $D'$ corresponds to a relation $bc'd'=e$ and the edge labelled $b$ is
opposite
the vertex~$v$.
\item $c\neq c'$ and $d\neq d'$.
\end{enumerate}
Thus we can extend $R$ non-trivially at both ends by $D$ and $D'$ to construct a
region $R'$, pictured in Figure~\ref{region R'},
\refstepcounter{picture}
\begin{figure}[htbp]
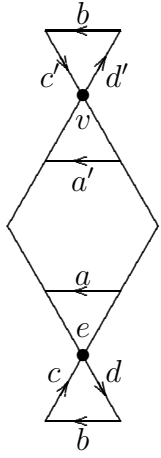
\label{region R'}
{}\hfill
\beginpicture
\setcoordinatesystem units <0.5cm,0.866cm>    
\setplotarea x from -6 to 6, y from -2 to 6         
\put {$\bullet$} at 0 4
\put {$\bullet$} at 0 0
\put {$e$}     [b]  at    0  0.3
\put {$v$}     [t] at    0  3.7
\put {$a$}    [b]  at    0  1.1
\put {$b$}    [b]  at    0  5.1
\put {$b$}    [t]  at    0  -1.1
\put {$a'$}   [t]   at    0  2.9
\put {$c$}    [br]  at    -0.6  -0.4
\put {$c'$}    [tr]  at    -0.6  4.5
\put {$d$}    [bl]  at    0.6  -0.4
\put {$d'$}    [tl]  at    0.6  4.5
\putrule from -1    1     to  1    1
\putrule from -1    5     to  1    5
\putrule from -1    3     to  1    3
\putrule from -1    -1     to  1    -1
\setlinear
\plot 1 5   -2 2   1 -1  /
\plot -1 5   2 2   -1 -1   /
\arrow <5pt> [.3,.67] from 0.1 1 to -0.25 1
\arrow <5pt> [.3,.67] from 0.1 3 to -0.25 3
\arrow <5pt> [.3,.67] from 0.1 5 to -0.25 5
\arrow <5pt> [.3,.67] from 0.1 -1 to -0.25 -1
\arrow <5pt> [.3,.67] from -0.6 4.6 to -0.4 4.4
\arrow <5pt> [.3,.67] from 0.4 4.4 to 0.6 4.6
\arrow <5pt> [.3,.67] from -0.6 -0.6 to -0.4 -0.4
\arrow <5pt> [.3,.67] from 0.4 -0.4 to 0.6 -0.6
\endpicture
\hfill{}
\caption{The region $R'$.}
\end{figure}
which undergoes no cancellation upon iteration. This enables us to construct an
apartment $\cA'$ containing the region obtained by tesselating $R'$
infinitely in
the directions of $\cS$ and its opposite as in Figure~\ref{tesselation}.

\refstepcounter{picture}
\begin{figure}[htbp]
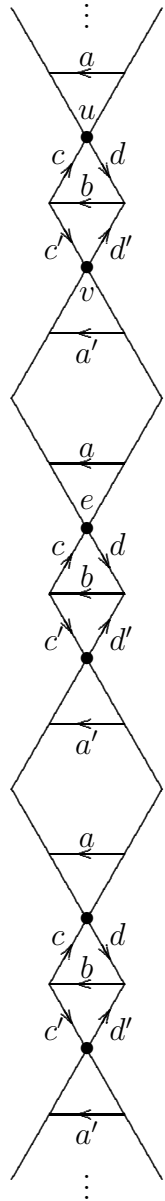
\label{tesselation}
{}\hfill
\beginpicture
\setcoordinatesystem units <0.5cm,0.866cm>    
\setplotarea x from -6 to 6, y from -11 to 10         
\multiput {$\bullet$} at 0 6 *2 0 -6 /
\multiput {$\bullet$} at 0 4 *2 0 -6 /
\put {$\vdots$}   at    0  8
\put {$\vdots$}   at    0  -10
\put {$e$}     [b]  at    0  0.3
\put {$v$}     [t] at    0  3.7
\put {$u$}     [b] at    0  6.3
\multiput {$a$}    [b]  at    0  7.1 *2 0 -6 /
\multiput {$b$}    [b]  at    0  5.1 *2 0 -6 /
\multiput {$a'$}   [t]   at    0  2.9 *2 0 -6 /
\multiput {$c$}    [br]  at    -0.6  5.6 *2 0 -6 /
\multiput {$c'$}    [tr]  at    -0.6  4.5 *2 0 -6 /
\multiput {$d$}    [bl]  at    0.6  5.6 *2 0 -6 /
\multiput {$d'$}    [tl]  at    0.6  4.5 *2 0 -6 /
\putrule from -1    1     to  1    1
\putrule from -1    5     to  1    5
\putrule from -1    -5     to  1    -5
\putrule from -1    3     to  1    3
\putrule from -1    -3     to  1    -3
\putrule from -1    7     to  1    7
\putrule from -1    -1     to  1    -1
\putrule from -1    -7     to  1    -7
\putrule from -1    -9     to  1    -9
\setlinear
\plot -2  8   1 5   -2 2   1 -1  -2 -4   1 -7   -2 -10 /
\plot 2  8   -1 5   2 2   -1 -1  2 -4   -1 -7   2 -10 /
\arrow <5pt> [.3,.67] from 0.1 1 to -0.25 1
\arrow <5pt> [.3,.67] from 0.1 3 to -0.25 3
\arrow <5pt> [.3,.67] from 0.1 5 to -0.25 5
\arrow <5pt> [.3,.67] from 0.1 7 to -0.25 7
\arrow <5pt> [.3,.67] from 0.1 -1 to -0.25 -1
\arrow <5pt> [.3,.67] from 0.1 -3 to -0.25 -3
\arrow <5pt> [.3,.67] from 0.1 -5 to -0.25 -5
\arrow <5pt> [.3,.67] from 0.1 -7 to -0.25 -7
\arrow <5pt> [.3,.67] from 0.1 -9 to -0.25 -9
\arrow <5pt> [.3,.67] from -0.6 5.4 to -0.4 5.6
\arrow <5pt> [.3,.67] from 0.4 5.6 to 0.6 5.4
\arrow <5pt> [.3,.67] from -0.6 4.6 to -0.4 4.4
\arrow <5pt> [.3,.67] from 0.4 4.4 to 0.6 4.6
\arrow <5pt> [.3,.67] from -0.6 -0.6 to -0.4 -0.4
\arrow <5pt> [.3,.67] from 0.4 -0.4 to 0.6 -0.6
\arrow <5pt> [.3,.67] from -0.6 -1.4 to -0.4 -1.6
\arrow <5pt> [.3,.67] from 0.4 -1.6 to 0.6 -1.4
\arrow <5pt> [.3,.67] from -0.6 -6.6 to -0.4 -6.4
\arrow <5pt> [.3,.67] from 0.4 -6.4 to 0.6 -6.6
\arrow <5pt> [.3,.67] from -0.6 -7.4 to -0.4 -7.6
\arrow <5pt> [.3,.67] from 0.4 -7.6 to 0.6 -7.4
\endpicture
\hfill{}
\caption{Infinite tesselation of the region $R'$.}
\end{figure}

The convex hull of the infinite tesselation of $R'$ uniquely determines the
entire apartment $\cA'$. Let
 \[
u=a_{m+2,m+2}=vd'd^{-1}=v(c')^{-1}c
\]
as labelled on Figure~\ref{tesselation}. Note that the labelling of the edges of
$\cA'$ looks the same when viewed from each of the vertices $u^n\in\cA'$  and
hence $u^n\cA'=\cA'$ for every $n\in\ZZ$. Thus $\cA'$ is $(m+2,m+2)$-periodic
with minimal periodicity greater than or equal to $m$ by construction and its
periodicity lattice will contain the elements $\left(n(m+2),n(m+2)\right)$
for all
$n\in\ZZ$. \end{proof}

The following observation enables us to deduce some rather nice
properties of rigidly periodic apartments and the boundary points they define.

\begin{lem}\label{rigidity}
A rigidly periodic apartment is completely determined by any single sector.
\end{lem}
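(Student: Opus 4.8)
The guiding fact is the one already recorded for the opposite sectors of Figure~\ref{opposite sectors}: an apartment is determined by any pair of opposite sectors based at a common vertex, equivalently by a pair of opposite chambers at infinity. A single sector $\cS$ only supplies the one chamber at infinity $\w=[\cS]\in\Om$, and indeed $\cS$ lies in uncountably many apartments (Remark~\ref{uncountable apartments}); so the content of the lemma is that rigid periodicity pins down the missing opposite chamber. The plan is to extract from $\cS$ a hyperbolic translation $g$ stabilising $\cA$, to locate the two opposite chambers at infinity cut out by the axis of $g$, and thereby to recover $\cA$.

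First I would produce $g$. Recall from \S\ref{labelling apartments} that the labelling $\cS=\{a_{i,j}:i,j\ge0\}$ with $v=a_{0,0}$ is intrinsic to $\cS$. Choosing any periodicity vector $(r,s)\in\cL$, the defining relation for $\cL$ shows that $g:=a_{i+r,j+s}\,a_{i,j}^{-1}$ is independent of $(i,j)$ and satisfies $g\,a_{i,j}=a_{i+r,j+s}$; thus $g\in\G$ acts on $\cA$ as the Euclidean translation by $(r,s)$ and $g\cA=\cA$. The point is that $g$ can be read off from $\cS$: rigidity guarantees a regular vector $(r,s)$, i.e.\ one with $r,s\ne0$ and $r+s\ne0$ (for a doubly periodic $\cA$ the lattice $\cL$ has finite index in $\ZZ^2$, so $\cL\supseteq N\ZZ^2$ and $(N,N)\in\cL$ with $N\ge1$), and after replacing $(r,s)$ by its negative we may take $r>0$; then $(i,j)=(0,0)$ when $s>0$, or $(i,j)=(0,-s)$ when $s<0$, places both $a_{i,j}$ and $a_{i+r,j+s}$ in $\cS$, so $g$ is a word in vertices of $\cS$.

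Next I would recover $\cA$ from $g$. When $r,s>0$ this is elementary: $g^{-n}\cS=\{a_{i,j}:i\ge-nr,\ j\ge-ns\}$ is an increasing chain of subcomplexes of $\cA$ with $\cA=\bigcup_{n\ge0}g^{-n}\cS$, so $\cA$ is visibly determined by $\cS$ and $g$. For a general regular direction I would instead use the ends of the axis: for a vertex $x\in\cS$ the limits $\w_+=\lim_{n\to\infty}g^{n}x$ and $\w_-=\lim_{n\to\infty}g^{-n}x$ exist in the directions $\pm(r,s)$, and here rigidity is essential, for regularity of $(r,s)$ forces these limits to lie in the interiors of opposite sectors of $\cA$, so that $\w_+,\w_-$ are genuine and mutually opposite chambers at infinity. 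The sectors $\cS_v(\w_+)$ and $\cS_v(\w_-)$ are then opposite and, by the guiding fact, span a unique apartment, which must be $\cA$.

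The main obstacle is to make this reconstruction depend on $\cS$ alone, i.e.\ to rule out two distinct rigidly periodic apartments sharing a sector. Here the cleanest route is a common period: if $\cA,\cA'$ are rigidly periodic and share $\cS$, their periodicity lattices are of finite index in $\ZZ^2$ (this is where rigidity, in its equivalent guise as double periodicity, enters), so they contain a common regular vector $(r,s)$ with $r,s>0$; the corresponding $g=a_{r,s}v^{-1}$ is then read from the shared sector and stabilises both apartments, whence $\cA=\bigcup_{n\ge0}g^{-n}\cS=\cA'$. The two imported building facts---that a regular translation direction yields chambers rather than lower-dimensional simplices at infinity, and that an opposite pair of chambers at infinity lies in a unique apartment---also deserve care. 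It is precisely rigidity that makes the axis direction regular: were $(r,s)$ parallel to a wall (the excluded cases $r=0$, $s=0$, $s=-r$), the ends of the axis would be walls at infinity shared by many apartments, and $\cS$ would fail to determine $\cA$.
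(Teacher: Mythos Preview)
Your core strategy matches the paper's: extract a translation from a regular period $(r,s)\in\cL$ and rebuild $\cA$ from $\cS$. Your $\cA=\bigcup_{n\ge0}g^{-n}\cS$ for $r,s>0$ is the paper's Case~2 formula $a_{i,j}=a_{k,l}a_{k+mr,l+ms}^{-1}a_{i+mr,j+ms}$ repackaged, and your axis-ends argument for the mixed-sign case parallels the paper's Case~3, where concrete opposite sectors $\cS',\cS''$ are built as unions of convex hulls of the base chamber $C$ with the axis points $a_{mr,ms}$. The paper treats the doubly periodic case separately (Case~1, via a direct two-parameter formula) rather than reducing it to a positive period as you do, but this is organizational.

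There is, however, a genuine circularity in your last paragraph. To dispose of the uniqueness question you invoke ``rigidity, in its equivalent guise as double periodicity'' so that two rigidly periodic apartments sharing $\cS$ have finite-index periodicity lattices and hence a common positive period. But in the paper the implication rigid $\Rightarrow$ doubly periodic is Lemma~\ref{rigid implies doubly periodic}, whose proof rests on Corollary~\ref{convex hull enough}, itself a corollary of the present lemma. You therefore cannot assume the periodicity lattice has finite index at this stage; for a singly periodic apartment with one regular direction, $\cL\cap\cL'$ could a priori be trivial, and your common-period argument collapses.

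The paper simply does not isolate this uniqueness step: it shows directly that each vertex $a_{i,j}$ of $\cA$ is a word in $\cS$-vertices (using whatever regular period $\cA$ happens to have), so the vertex set of $\cA$ as a subset of $\G$ is recovered. Your first two paragraphs already accomplish this, and that is all the paper claims; the concern you raise in the third paragraph is legitimate, but your proposed resolution is not available to you here.
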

\begin{proof}
We begin by labelling the apartment $\cA$  with respect to the known sector
$\cS$ so that
the set of vertices  $\{ a_{k,l} : k,l\geq 0 \}$ is known. We show that in
fact $a_{i,j}$ is then uniquely determined for all $i,j\in\ZZ$. Without loss of
generality, we consider three separate cases.
\par\noindent{\em Case 1: The doubly periodic case.}

Suppose that $\cA$  is an $\left\{ (r,s),(t,u)\right\}$-periodic apartment with
$(r,s)$ and $(t,u)$ being linearly independent vectors in $\RR^2$. Thus,
given any
$i,j\in\ZZ$ there exist $m,n\in\ZZ$ such that $mr+nt,ms+nu,i+mr+nt$, and
$j+ms+nu$
are all positive. From the properties of double periodicity, we deduce that
\[
a_{i,j}=a_{k,l}a_{k+mr+nt,l+ms+nu}^{-1}a_{i+mr+nt,j+ms+nu}
\]
so that if $k,l\geq 0$ every term on the right hand side of the equation is
a vertex
in
$\cS$ and is therefore known. Since $i$ and $j$ were arbitrary integers,
$a_{i,j}$ is
uniquely determined for all $i,j\in\ZZ$. We deduce that $\cA$ is completely
determined.
\par\noindent{\em Case 2: The $(r,s)$-periodic case with $r,s>0$.}

In this case given any $i,j\in\ZZ$ there exists a positive integer $m$ such that
$i+mr,j+ms>0$. Once again this leads to an equation of the form
\[
a_{i,j}=a_{k,l}a_{k+mr,l+ms}^{-1}a_{i+mr,j+ms}
\]
where every element on the right hand side is a vertex in $\cS$ and therefore
uniquely determined.
\par\noindent{\em Case 3: The $(r,s)$-periodic case with either $r>0$ but
$s<0$ or
$r<0$ but $s>0$.}

We note that the vertices $a_{mr,ms}$ are determined for all $m\in\ZZ$ since
\[
a_{mr,ms}= a_{k+mr,l+ms}a_{k,l}^{-1}a_{0,0}
\]
and suitably large choices of $k,l$ ensures that all the elements on the
right hand
side are vertices in $\cS$.Since  $r\neq 0\neq s\neq -r$ the vertex $a_{mr,ms}$
does not lie on a sector wall emanating from $a_{0,0}$. Suppose that $\cS$
has base
chamber $C$. The convex hull of $C$ and $a_{mr,ms}$ for all $m\in\ZZ$ must
also be
contained in $\cA$ . By taking arbitrarily large positive values for $m$ we know
that a sector $\cS'$ adjacent to $\cS$ is determined. Taking arbitrarily large
negative values for $m$ implies that  the sector $\cS''$ opposite $\cS'$ is
determined. Since the convex hull of $\cS'$ and $\cS''$ is $\cA$  this
means that
$\cA$  is entirely determined. Figure~\ref{case 3} illustrates the proof in
the case
of $r>0$ and $s<0$ where the shaded area represents the convex hull of $C$ and
$a_{mr,ms}$.

\refstepcounter{picture}
\begin{figure}[htbp]
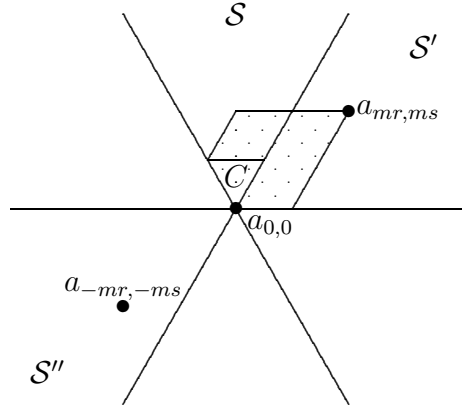
\label{case 3}
{}\hfill
\beginpicture
\setcoordinatesystem units <0.5cm,0.866cm>    
\setplotarea x from -6 to 6, y from -4 to 4         
\put {$\bullet$} at 0 0
\put {$\bullet$} at -3 -1.5
\put {$\bullet$} at 3 1.5
\put {$a_{0,0}$}             [t l]  at    0.3   -0.1
\put {$a_{mr,ms}$}       [l] at   3.2  1.5
\put {$a_{-mr,-ms}$}   [b] at   -3  -1.4
\put {$C$}   at   0  0.5
\put {$\cS$}    at   0  3
\put {$\cS'$}   at    5 2.5
\put {$\cS''$}  at    -5  -2.5
\putrule from -6    0     to  6    0
\putrule from -0.75 0.75  to  0.75 0.75
\putrule from   0    1.5     to  3 1.5
\setlinear
\plot -0.75 0.75   0 1.5 /
\plot  1.5 0   3 1.5 /
\plot -3  3   3 -3 /
\plot -3 -3   3  3 /
\vshade -0.75 0.75 0.75 <z,z,z,z>    0 0 1.5  <z,z,z,z> 1.5 0 1.5
<z,z,z,z>    3 1.5 1.5 /
\endpicture
\hfill{}
\caption{Illustration of the proof of case 3 with $r>0$ and $s<0$.}
\end{figure}

This completes the proof since all other cases are equivalent to one of those
considered.
\end{proof}

\begin{cor}\label{convex hull enough}
Suppose an apartment $\cA=(a_{i,j})_{i,j\in\ZZ}$ is rigidly periodic. If
$\cA$ is
$(r,s)$-periodic for some $r,s\in\ZZ$ with $r,s\neq 0$ and $s\neq -r$ then
$\cA$ is
uniquely determined by $a_{i,j}$, $a_{i+r,j+s}$ and
$a_{i,j}a_{i+r,j+s}^{-1}$ for every
$i,j\in\ZZ$.
\end{cor}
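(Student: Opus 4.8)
The corollary essentially repackages Lemma~\ref{rigidity}, so my plan is to reduce it to that lemma. By Lemma~\ref{rigid implies doubly periodic} (rigid and double periodicity are equivalent) together with the hypothesis that $\cA$ is $(r,s)$-periodic with $r,s\neq 0$ and $s\neq -r$, the apartment $\cA$ falls squarely into one of the cases covered by Lemma~\ref{rigidity}. The content to establish is therefore: the data $a_{i,j}$, $a_{i+r,j+s}$, and $a_{i,j}a_{i+r,j+s}^{-1}$ (for all $i,j$) suffices to reconstruct a single sector, after which Lemma~\ref{rigidity} finishes the job.

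First I would observe that specifying the pair of vertices $a_{i,j}$ and $a_{i+r,j+s}$ together with the group element $a_{i,j}a_{i+r,j+s}^{-1}$ pins down, for each $(i,j)$, the \emph{relative position} of these two vertices in $\D$: knowing the element $a_{i,j}a_{i+r,j+s}^{-1}$ determines the labelled path from $a_{i,j}$ to $a_{i+r,j+s}$, hence the convex hull $I$ joining them in $\cA$ as a labelled subcomplex. The key point is that as $(i,j)$ ranges over a suitable set of indices, these convex hulls cover a full sector $\cS$ of $\cA$: since $(r,s)$ is a fixed nonzero lattice direction with $s\neq -r$, the vertices $a_{i,j}$ and their $(r,s)$-translates fan out across a two-dimensional region rather than collapsing onto a single wall, exactly as in Case~3 of the proof of Lemma~\ref{rigidity}. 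Assembling these overlapping labelled convex hulls reconstructs the labelling of an entire sector.

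Concretely, I would fix attention on the translates $a_{mr,ms}$ for $m\in\ZZ$, which are recoverable from the given data, and on the convex hulls of the base chamber $C$ with each $a_{mr,ms}$. Because $s\neq -r$ and $r,s\neq 0$, these vertices do not lie on a common sector wall, so their convex hulls with $C$ sweep out a genuine two-dimensional cone; letting $m\to+\infty$ determines a sector $\cS'$ adjacent to the original, precisely the construction already carried out in Case~3. Thus from the listed data alone we recover a full sector of $\cA$.

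Having recovered a single sector, I would then invoke Lemma~\ref{rigidity} verbatim: a rigidly periodic apartment is completely determined by any single sector, so $\cA$ is uniquely determined. The main obstacle, and the only step requiring care, is the reconstruction of the sector from the three pieces of data rather than from vertices alone: one must check that the group element $a_{i,j}a_{i+r,j+s}^{-1}$ genuinely encodes the \emph{labelled} edge-path between the two vertices (so that the intermediate vertices, not just the endpoints, are determined) and that the hypothesis $s\neq -r$ is exactly what prevents degeneration to a one-dimensional wall. Once these geometric facts are in place, the corollary follows immediately from Lemma~\ref{rigidity}.
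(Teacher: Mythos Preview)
Your approach is essentially the same as the paper's: reconstruct a sector from the convex-hull data, then invoke Lemma~\ref{rigidity}. The paper's execution is slightly more economical: it fixes a \emph{single} pair $(i,j)$, sets $g=a_{i,j}^{-1}a_{i+r,j+s}$, takes the convex hull $\cC$ of $a_{i,j}$ and $a_{i+r,j+s}$, and observes that the iterates $g^n\cC$ (for $n\in\NN$) together have convex hull equal to a full sector. In other words, the paper reads ``for every $i,j\in\ZZ$'' as ``for any one fixed $(i,j)$'', and generates all the needed translates from the single periodicity element $g$, rather than assembling convex hulls as $(i,j)$ varies. Your version, which lets $(i,j)$ range, proves the slightly weaker reading but by the same mechanism.

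One genuine issue: your appeal to Lemma~\ref{rigid implies doubly periodic} is circular, since that lemma is proved \emph{using} this corollary. Fortunately the appeal is also unnecessary: the hypothesis already says $\cA$ is rigidly periodic, which is exactly the hypothesis of Lemma~\ref{rigidity}, so you can apply that lemma directly without ever mentioning double periodicity. Drop the reference to Lemma~\ref{rigid implies doubly periodic} and your argument goes through.
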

\begin{proof}
Let $g=a_{i,j}^{-1}a_{i+r,j+s}$ and $\cC$ be the convex hull of $a_{i,j}$ and
$a_{i+r,j+s}$ as in Figure~\ref{convex hull in periodic apartment}.

\refstepcounter{picture}
\begin{figure}[htbp]
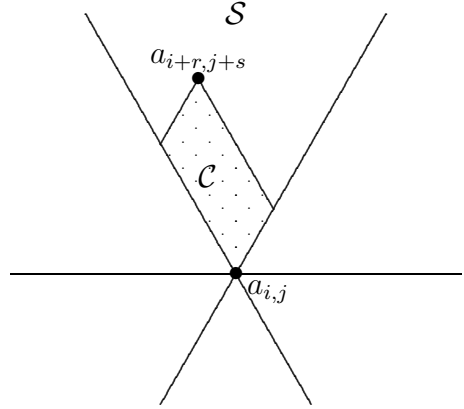
\label{convex hull in periodic apartment}
{}\hfill
\beginpicture
\setcoordinatesystem units <0.5cm,0.866cm>    
\setplotarea x from -7 to 7, y from -2 to 5         
\put {$\bullet$} at 0 0
\put {$\bullet$} at -1 3
\put {$a_{i,j}$}             [t l]  at    0.3   -0.1
\put {$a_{i+r,j+s}$}             [b] at   -1 3.1
\put {$\cC$}                         at   -0.75 1.5
\put {$\cS$}                         at   0 4
\putrule from -6    0     to  6    0
\setlinear
\plot -4  4   2 -2 /
\plot -2 -2   4  4 /
\plot  -2 2   -1 3    0 2   1 1 /
\vshade -2 2 2 <z,z,z,z>    -1 1 3  <z,z,z,z> 0 0 2  <z,z,z,z>    1 1 1 /
\endpicture
\hfill{}
\caption{Convex hull $\cC$ of $a_{i,j}$ and $a_{i+r,j+s}$.}
\end{figure}

Then $g^n\cA=\cA$ for every $n\in\ZZ$ and the convex hull of the iterates
$g^n\cC$, for $n\in\NN$ determine a sector $\cS$ in $\cA$. Thus $\cA$ is
uniquely
determined by Lemma~\ref{rigidity}.
\end{proof}

\begin{cor}\label{cor 1}
A boundary point $\w\in\Om$ can have representative sectors in at most one
rigidly periodic apartment $\cA$ .
\end{cor}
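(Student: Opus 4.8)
The plan is to reduce the statement to Lemma~\ref{rigidity} by exploiting the fact that any two representative sectors of a single boundary point share a common subsector. Suppose $\w\in\Om$ has representative sectors in two rigidly periodic apartments $\cA$ and $\cA'$; say $\cS\subset\cA$ and $\cS'\subset\cA'$ are sectors lying in the equivalence class $\w$. The goal is to show $\cA=\cA'$.

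First I would invoke the definition of the boundary. Since $\cS$ and $\cS'$ both represent $\w$ they are parallel, and by definition parallel sectors contain a common subsector $\cS''$. A subsector is again a sector (a simplicial cone), and by construction $\cS''\subseteq\cA$ and $\cS''\subseteq\cA'$, so $\cS''$ is a single sector lying in both apartments. Next I would apply Lemma~\ref{rigidity} to each apartment relative to $\cS''$: labelling so that the base vertex of $\cS''$ is $a_{0,0}$, the vertices $a_{k,l}$ with $k,l\geq 0$ are precisely those of $\cS''$, and the lemma shows that every vertex $a_{i,j}$ of $\cA$ is determined by the vertices of $\cS''$ together with the rigid periodicity of $\cA$. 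The same labelling and the same reconstruction apply verbatim to $\cA'$, since $\cA'$ contains the identical sector $\cS''$ and is also rigidly periodic. Hence $a_{i,j}$ agrees in $\cA$ and $\cA'$ for every $i,j\in\ZZ$, so $\cA=\cA'$, and $\w$ lies in at most one rigidly periodic apartment.

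The step I expect to require the most care is confirming that the reconstruction of Lemma~\ref{rigidity} genuinely depends only on the shared sector $\cS''$ and not on which ambient apartment one started from, particularly when $\cA$ and $\cA'$ carry different periodicity lattices. The key observation is that the periodicity data driving the reconstruction — the common step element $a_{k,l}^{-1}a_{k+r,l+s}$ along a periodic direction $(r,s)$ — is constant across the positive quadrant and is therefore already visible within $\cS''$ itself. Each formula of Lemma~\ref{rigidity} then expresses $a_{i,j}$ purely in terms of vertices of $\cS''$, so feeding the identical sector data into identical formulas forces the two reconstructions, and hence $\cA$ and $\cA'$, to coincide.
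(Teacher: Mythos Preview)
Your proof is correct and follows essentially the same approach as the paper: both arguments pass to a common subsector $\cS''\subseteq\cA\cap\cA'$ of the two representative sectors and then invoke Lemma~\ref{rigidity} to force $\cA=\cA'$. Your closing paragraph, addressing why the reconstruction from $\cS''$ does not depend on which ambient apartment's periodicity lattice is used, supplies more justification than the paper itself gives, but the underlying strategy is identical.
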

\begin{proof}
Suppose $\cA$  and $\cA'$  are both rigidly periodic apartments which contain
representative sectors of $\w$; say $\cS\subset \cA$ and $\cS'\subset \cA'$ are
two such sectors. Since $\cS$ and $\cS'$ are equivalent sectors, they must
contain a
common subsector $\cS''$. Hence $\cS''\subseteq \cA\cap \cA'$, and therefore by
Lemma~\ref{rigidity} we must have $\cA=\cA'$.
\end{proof}

The following corollary is a generalization of~\cite[Lemma 3.4]{MSZ}.

\begin{cor}\label{cor 2}
Let $\cA$  and $\cA'$  be rigidly periodic apartments which contain
representative
sectors of $\w,\w'\in\Om$ respectively. If $g\w=\w'$ for some $g\in\G$ then
$\cA'=g\cA$. In particular, if $\cA=\cA'$ then $g$ must stabilize $\cA$.
\end{cor}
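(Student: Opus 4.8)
The plan is to deduce this directly from Corollary~\ref{cor 1}, using the fact that the $\G$-action preserves both rigid periodicity and the boundary point carried by a sector. First I would note that since $\cA$ is rigidly periodic, so is $g\cA$: this is exactly the $\G$-invariance of the set of rigidly periodic apartments remarked on immediately after the definition of rigid periodicity.

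Next I would follow a representative sector through the action of $g$. Choose a representative sector $\cS\subset\cA$ of $\w$. Because the action of $\G$ on $\Om$ is defined by left multiplication on the vertices of a sector, and because $g$ acts as a type-rotating automorphism of $\D$ (hence carries sectors to sectors and apartments to apartments), the image $g\cS$ is a representative sector of $g\w=\w'$, and $g\cS\subset g\cA$. Thus $g\cA$ is a rigidly periodic apartment containing a representative sector of $\w'$.

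At this point both $g\cA$ and $\cA'$ are rigidly periodic apartments containing representative sectors of the single boundary point $\w'$. Corollary~\ref{cor 1} asserts that a boundary point can have representative sectors in at most one rigidly periodic apartment, so $g\cA=\cA'$, as required. The final claim is then immediate: taking $\cA=\cA'$ gives $g\cA=\cA$, so $g$ stabilizes $\cA$.

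I expect no serious obstacle. The only step requiring attention is the verification that $g\cS$ is genuinely a representative sector of $\w'$ lying in $g\cA$, but this is merely a matter of unwinding the definition of the boundary action together with the fact that $g$ is a simplicial automorphism, so that it commutes with the formation of convex hulls, sectors, and apartments. Everything else is a direct appeal to the two facts cited above.
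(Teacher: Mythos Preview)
Your proposal is correct and follows essentially the same approach as the paper's proof: choose a representative sector of $\w$ in $\cA$, note that $g\cA$ is rigidly periodic and contains the image sector, which represents $\w'$, and then invoke Corollary~\ref{cor 1} to conclude $\cA'=g\cA$. The paper's argument is just a more compressed version of what you wrote.
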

\begin {proof}
Suppose $\cS_v(\w)\subset \cA$ is a representative of $\w$. The apartment
$g\cA$ is
rigidly periodic and contains $g\cS_v(\w)=\cS_{gv}(g\w)=\cS_{gv}(\w')$, a
representative of $\w'$. By Corollary~\ref{cor 1} we must have $\cA'=g\cA$.
\end{proof}

Notice that it is not necessary to know {\em a priori} that $\cA$  and
$\cA'$ have
the same minimal period; we simply need to know that they are both rigidly
periodic.

If we know that an apartment is periodic and that it is stabilized by a certain
subset of $\G$, the following result enables us to bound the minimal
periodicity of
$\cA$.

\begin{prop}
Let $F\subset\Gamma\setminus\{ e\}$ be a finite set, and suppose $\cA$ is a
periodic apartment such that $g\cA=\cA$ for all $g\in F$. Then $\cA$ can
have minimal period of at most $2\max\{ |g| : g\in F\}$.
\end{prop}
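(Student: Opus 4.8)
The plan is to show that every $g\in F$ acts on the Euclidean plane underlying $\cA$ as a translation recorded by a nonzero vector of the periodicity lattice $\cL$, whose length then controls the minimal period. First I would fix a labelling $\cA=(a_{i,j})_{i,j\in\ZZ}$ with respect to a sector $\cS\subset\cA$ and take $g\in F$. Since $\G$ acts simply transitively, and hence freely, on the vertices of $\D$, the element $g\neq e$ fixes no vertex, so $g|_{\cA}$ is a fixed-point-free isometry of the plane. Because $g$ is type-rotating it preserves the cyclic order of the three vertex types around each chamber, hence is orientation-preserving, and an orientation-preserving plane isometry is a translation or a rotation. A nontrivial rotation of the triangular tessellation has finite order, its centre being a vertex, an edge-midpoint or a face-centre, and as $\G$ is torsion-free this case cannot occur. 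Therefore $g|_{\cA}$ is a nontrivial translation, which in lattice coordinates takes the form $a_{i,j}\mapsto a_{i+r,j+s}$ for a fixed $(r,s)\neq(0,0)$.

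Next I would check that $(r,s)$ lies in $\cL$. Left multiplication by $g$ carries the directed edge from $a_{i,j}$ to $a_{k,l}$, labelled $a_{i,j}^{-1}a_{k,l}$, to the edge labelled $(ga_{i,j})^{-1}(ga_{k,l})=a_{i,j}^{-1}a_{k,l}$, so $g$ preserves the edge-labelling of $\cA$. Since $g$ sends $a_{i,j}$ to $a_{i+r,j+s}$, this is precisely the statement that
\[
a_{i,j}^{-1}a_{k,l}=a_{i+r,j+s}^{-1}a_{k+r,l+s}\qquad\text{for all }i,j,k,l\in\ZZ,
\]
i.e.\ $(r,s)\in\cL\setminus\{(0,0)\}$, so that $g$ itself witnesses the $(r,s)$-periodicity of $\cA$.

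Finally I would read off the period. By the definition of the minimal period $m$ together with the previous step, $m\le|a_{0,0}^{-1}a_{r,s}|=d(a_{0,0},g\,a_{0,0})=:\ell$. The images $a_{0,0},g\,a_{0,0},g^{2}a_{0,0},\dots$ are collinear and equally spaced along the translation axis in $\cA$, so $d(a_{0,0},g^{n}a_{0,0})=n\ell$; combining this with $d(a_{0,0},g^{n}a_{0,0})\le 2|a_{0,0}|+|g^{n}|\le 2|a_{0,0}|+n|g|$ and letting $n\to\infty$ gives $\ell\le|g|$. Hence $m\le|g|\le\max\{|g|:g\in F\}\le 2\max\{|g|:g\in F\}$, as required.

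The main obstacle is the first step: pinning down that $g$ acts as a translation rather than a rotation. This relies on the fact that type-rotating automorphisms are orientation-preserving, ruling out reflections and glide reflections (which reverse the cyclic order of types), together with torsion-freeness of $\G$, ruling out the finite-order rotations of the tessellation. Once $g$ is known to be a translation the remaining steps are routine; indeed the argument yields the sharper bound $m\le\min\{|g|:g\in F\}$, so the factor $2$ in the statement leaves comfortable slack.
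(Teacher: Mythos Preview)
Your argument has a genuine gap in the first step: the claim that a type-rotating automorphism must act orientation-preservingly on a stabilized apartment is false. In the Coxeter complex of type $\tilde{A}_2$, every reflection in a wall is type-\emph{preserving} (the affine Weyl group acts type-preservingly on its Coxeter complex), yet it reverses the Euclidean orientation. Hence a glide reflection along a wall is type-rotating but orientation-reversing. Your ``cyclic order of types'' reasoning conflates two different things: the fact that $t\mapsto t+i$ is an even permutation of $\ZZ/3\ZZ$ does not force the induced map on chambers to preserve the planar orientation, because adjacent chambers (the ``up'' and ``down'' triangles of the tessellation) already carry opposite cyclic orders of types relative to a fixed Euclidean orientation. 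A type-rotating map may perfectly well send ``up'' chambers to ``down'' chambers.

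This is precisely why the paper's proof does not exclude glide reflections. There one writes $T_g(i,j)=\sigma(i,j)+(r,s)$ with $\sigma$ in the point group $S$ at $a_{0,0}$, observes that $T_g$ cannot be a pure reflection (it would fold $\cA$ onto a root), and concludes that $g$ induces either a translation or a glide reflection on $\cA$. In the translation case $|g|\ge m$ as you argue; in the glide-reflection case $g^2$ is a translation, so $|g^2|\ge m$ and then $2|g|\ge|g^2|\ge m$. The factor $2$ in the statement is therefore not comfortable slack but exactly the price of the glide-reflection case, and your sharper bound $m\le|g|$ is unsupported.

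A secondary issue: your appeal to torsion-freeness of $\G$ to rule out rotations is not safe in this generality either. A triangle presentation may contain a triple $(x,x,x)\in\cT$, giving $a_x^3=e$, so $\G$ can have $3$-torsion; the paper does not assume otherwise.
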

\begin{proof}
Given such a periodic apartment $\cA$, label its vertices with respect
to some sector $\cS\in\cA$ and suppose that $\cA$ has minimal period $m$. Denote
by $S$ the group of symmetries generated by reflections in $\cA$ fixing
$a_{0,0}$. We begin by noting that if $g\cA=\cA$ then $ga_{i,j}=a_{k,l}$ where
\[
(k,l)= T_g(i,j) = \s(i,j) + (r,s)
\]
for some $\s\in S$ and $r,s\in\ZZ$. This is a slight generalization of a
statement
made in~\cite{MSZ} and we refer the reader to that paper for a proof since it
generalizes in a very straightforward manner.

Since $T_g$ corresponds to a non-trivial symmetry of $\cA$, $(r,s)$ must be
non-trivial whenever $\s$ is; otherwise the symmetry of $\cA$ would be a
reflection and would collapse $\cA$ to a root. Thus each $g\in F$ which
stabilizes
$\cA$ must either induce a translation or a glide reflection on $\cA$. If
$g$ induces
a translation on $\cA$ then $|g|\geq m$. On the other hand, if $g$ induces a
glide-reflection on $\cA$ then $g^2$ must induce a translation on $\cA$ so that
$|g^2|\geq m$. Since  $2|g|\geq |g^2|$ the result follows.
\end{proof}

\begin{cor}\label{m(F)}
Given a finite set $F\subset\Gamma\setminus\{ e\}$, there exists a positive
integer
$m(F)$ such that no element of $F$ can stabilize a periodic apartment with
minimal period greater than $m(F)$.
\end{cor}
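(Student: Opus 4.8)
The plan is to deduce Corollary~\ref{m(F)} directly from the preceding Proposition by taking a single uniform bound over the finite set $F$. The Proposition already tells us that any periodic apartment stabilized by \emph{all} elements of $F$ has minimal period at most $2\max\{|g| : g\in F\}$, but the Corollary asks for something slightly different and in fact stronger in spirit: a bound $m(F)$ such that \emph{no individual} element of $F$ can stabilize a periodic apartment of minimal period exceeding $m(F)$. So the work is really to reorganize the Proposition's conclusion from a statement about the whole set $F$ acting simultaneously to a statement about each element separately.

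First I would observe that the quantity $|g|$ is a positive integer for every $g\in\G\setminus\{e\}$, so that $\max\{|g| : g\in F\}$ is a well-defined positive integer whenever $F$ is finite and nonempty. I would then apply the Proposition not to the whole set $F$ but to each singleton $\{g\}$ for $g\in F$: each such singleton is a finite subset of $\G\setminus\{e\}$, so if $\cA$ is a periodic apartment stabilized by $g$ (i.e. $g\cA=\cA$), the Proposition with $F$ replaced by $\{g\}$ shows that $\cA$ has minimal period at most $2|g|$. Setting
\[
m(F)=2\max\{|g| : g\in F\}
\]
then gives the desired uniform bound, since $2|g|\leq m(F)$ for every $g\in F$.

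The verification that this $m(F)$ works is then immediate: suppose some $g\in F$ stabilizes a periodic apartment $\cA$. By the singleton application of the Proposition, the minimal period of $\cA$ is at most $2|g|\leq 2\max\{|h| : h\in F\}=m(F)$. Hence no element of $F$ stabilizes a periodic apartment of minimal period greater than $m(F)$, which is exactly the assertion.

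I do not expect any genuine obstacle here; the Corollary is essentially a repackaging of the Proposition. The only point requiring a moment's care is the logical direction: the Proposition is phrased for a set $F$ all of whose elements stabilize $\cA$, whereas the Corollary concerns each element acting on its own, so one must remember to invoke the Proposition with the singleton $\{g\}$ rather than with $F$ itself. One should also note the tacit assumption that $F$ is nonempty, so that the maximum is taken over a nonempty set (if $F=\emptyset$ the statement is vacuous and one may set $m(F)=1$).
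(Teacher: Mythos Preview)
Your proof is correct and is exactly the intended argument: the paper states the corollary without proof, and the natural way to read it is precisely your application of the preceding Proposition to each singleton $\{g\}$ with $g\in F$, followed by the uniform bound $m(F)=2\max\{|g|:g\in F\}$. Your remark about the logical shift from ``all of $F$'' to ``each element of $F$'' is the only point requiring attention, and you handle it correctly.
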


If $F=\{ g\}$ we denote $m(F)$ by $m(g)$.

The following result is due to S.~Mozes (c.f.\ proof of Proposition~2.13
of~\cite{Moz}) and was communicated to us by T.~Steger. We include it only
for the
benefit of the curious since we will not need to apply the result.

\begin{lem}\label{rigid implies doubly periodic}
Every rigidly periodic apartment is doubly periodic.
\end{lem}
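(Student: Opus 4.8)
The plan is to dispose of the only nontrivial alternative in the definition of rigid periodicity. A doubly periodic apartment is doubly periodic by definition, so it suffices to treat the case where $\cA$ is $(r,s)$-periodic with $r,s\neq 0$ and $s\neq -r$, and to show that such an $\cA$ cannot in fact be merely singly periodic. So I would argue by contradiction: suppose the periodicity lattice $\cL$ has rank one, say $\cL=\ZZ\cdot(r,s)$ with $(r,s)$ primitive. Labelling $\cA=(a_{i,j})$ with respect to a sector $\cS$ as in the definitions, I take $g=a_{0,0}^{-1}a_{r,s}\in\G$; as in Corollary~\ref{convex hull enough}, $g$ satisfies $g^n\cA=\cA$ for all $n\in\ZZ$ and acts on $\cA$ as the lattice translation $(i,j)\mapsto(i+r,j+s)$. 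The hypotheses $r\neq 0$, $s\neq 0$, $s\neq -r$ say precisely that $(r,s)$ lies on none of the three wall-directions $\{i=0\}$, $\{j=0\}$, $\{i+j=0\}$ through $a_{0,0}$ (compare Case~3 of Lemma~\ref{rigidity}), so $g$ is a \emph{regular} translation: it shifts each of the three parallel classes of walls of $\cA$ nontrivially.

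Next I would encode $\cA$ transversally to one wall-class. Fix the walls $W_c=\{(i,j):i=c\}$, $c\in\ZZ$. The labelling of $\cA$ is recorded by the bi-infinite sequence of labelled walls $(W_c)_{c\in\ZZ}$ together with the edges joining $W_c$ to $W_{c+1}$, and the admissible adjacencies of two consecutive walls are governed entirely by the triangle presentation $\cT$, which is finite. Thus $(W_c)$ is a path in a subshift of finite type over the finite alphabet of generators of $\G$, and the action of $g$ identifies the labelled wall $W_c$ with $W_{c+r}$ shifted along itself by $s$. Because $r\neq 0$ this is a genuine transverse displacement, so $g$ relates walls lying in distinct members of the class rather than merely shifting a single wall along its own length.

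The crux, and the step I expect to be the main obstacle, is a finiteness/pigeonhole argument transverse to $(r,s)$. Since there are only finitely many local configurations compatible with $\cT$ and $g$ forces the sequence of walls to repeat (up to along-wall shift) with transverse step $r$, two distinct transverse transition states must coincide, and this coincidence should propagate to a genuine translational symmetry of the whole labelling in a direction $(t,u)$ \emph{independent} of $(r,s)$. Regularity of $(r,s)$ is exactly what guarantees that the transverse structure is constrained in all three wall-directions at once, so that the repetition cannot be absorbed into the existing period $(r,s)$; this is the substance of the argument in the proof of Proposition~2.13 of~\cite{Moz}. Once such a $(t,u)\in\cL$ with $(t,u)\notin\ZZ\cdot(r,s)$ is produced, the lattice $\cL$ has rank two, contradicting $\cL\cong\ZZ$. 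Hence $\cA$ is doubly periodic, which together with the trivial doubly periodic case proves the lemma. Making the propagation step precise — turning a single repeated transverse state into a global second period, rather than just a local match — is where the real work lies, and I would model it directly on Mozes' treatment.
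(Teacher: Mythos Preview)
Your high-level strategy---produce a second, independent period by a pigeonhole argument---is exactly the paper's, but your execution has a gap that the paper closes with one clean observation.

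The trouble with your encoding is that the walls $W_c=\{(i,j):i=c\}$ are infinite objects, so ``the labelled wall $W_c$'' is not a letter in a finite alphabet and your subshift is not a priori over a finite state space. You acknowledge this when you defer the ``propagation step'' to Mozes; but the missing idea is already in the paper and does not require any subshift machinery.

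Here is the paper's move. After relabelling so that $r,s>0$, the convex hull $\cC$ of $a_{0,0}$ and $a_{r,s}$ is a \emph{finite} parallelogram, and by Corollary~\ref{convex hull enough} the pair ($(r,s)$-periodicity, labelled region $\cC$) determines all of $\cA$. Now slide $\cC$ along the wall direction $(1,-1)$: for each $m\in\ZZ$ the convex hull $\cC_m$ of $a_{m,-m}$ and $a_{m+r,-m+s}$ has the same shape as $\cC$ and sits in the fixed strip between the parallel walls $\{a_{k,-k}\}$ and $\{a_{r+k,s-k}\}$. Since $\cC$ contains only finitely many chambers and there are only finitely many generators of $\G$, there are only finitely many possible labellings of the $\cC_m$; by pigeonhole some $\cC_n$ with $n\neq 0$ carries the same labelling as $\cC_0=\cC$. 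But then $(r,s)$-periodicity together with $\cC_n$ also determines $\cA$, with the entire labelling shifted by $(n,-n)$; hence $\cA$ is $(n,-n)$-periodic. Since $s\neq -r$, the vector $(n,-n)$ is linearly independent of $(r,s)$, so $\cA$ is doubly periodic.

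Thus the finite object to pigeonhole is not a wall or a vague ``transverse transition state'' but the parallelogram $\cC$, and Corollary~\ref{convex hull enough} is precisely what turns a single local match into a global second period---the very step you flagged as the obstacle.
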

\begin{proof}
We need to show that every apartment $\cA$ which is $(r,s)$-periodic with
$r,s\neq 0$ and $s\neq-r$ is in fact doubly periodic. We begin by noting
that, with a
suitable relabelling of $\cA$ we may assume $r,s >0$.

By Corollary~\ref{convex hull enough}, the apartment $\cA$ and its labelling is
completely determined by its periodicity and by the convex hull, $\cC$, of
$a_{0,0}$ and
$a_{r,s}$. Consider the strip of the apartment between the wall $\cW$
containing the vertices
$a_{m,-m}$, and the parallel wall $\cW'$ containing the vertices
$a_{r+m,s-m}$. This
strip is depicted in Figure~\ref{strip}, where the shaded area represents
$\cC$. We
note that $\cC$ contains only a finite number of chambers.

\refstepcounter{picture}
\begin{figure}[htbp]
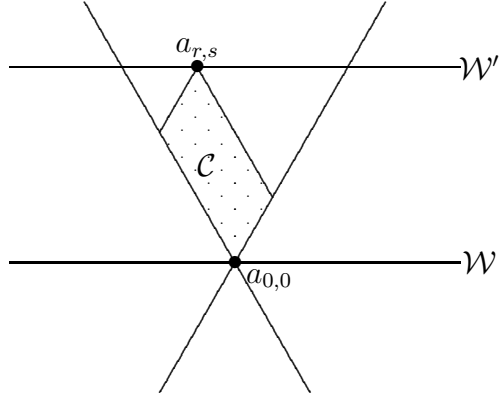
\label{strip}
{}\hfill
\beginpicture
\setcoordinatesystem units <0.5cm,0.866cm>    
\setplotarea x from -7 to 7, y from -2 to 5         
\put {$\bullet$} at 0 0
\put {$\bullet$} at -1 3
\put {$a_{0,0}$}             [t l]  at    0.3   -0.1
\put {$a_{r,s}$}             [b] at   -1 3.1
\put {$\cC$}                         at   -0.75 1.5
\put {$\cW$}                        at    6.5  0
\put {$\cW'$}                       at    6.5  3
\putrule from -6    0     to  6    0
\putrule from -6    3     to  6    3
\setlinear
\plot -4  4   2 -2 /
\plot -2 -2   4  4 /
\plot  -2 2   -1 3    0 2   1 1 /
\vshade -2 2 2 <z,z,z,z>    -1 1 3  <z,z,z,z> 0 0 2  <z,z,z,z>    1 1 1 /
\endpicture
\hfill{}
\caption{Strip between walls $\cW$ and $\cW'$ showing convex hull $\cC$ of
$a_{0,0}$ and $a_{r,s}$.}
\end{figure}
The convex hull of the vertices $a_{m,-m}$ and $a_{m+r,-m+s}$ will have the same
shape as $\cC$ for every $m\in\ZZ$. We consider the labelling of these convex
hulls.  There are only a finite number of possible labellings for a chamber
and hence there are only a finite number of ways these convex hulls can be
labelled. However, the strip in question is of infinite length so there
must be at
least two such convex hulls which share the same labelling. Without loss of
generality we may assume that one of these is $\cC$. Suppose that the other
is the
convex hull $\cC'$ of $a_{n,-n}$ and $a_{n+r,-n+s}$ for some
$n\in\ZZ$.

The $(r,s)$-periodicity and $\cC'$  completely determine $\cA$, as was the case
for $\cC$. Moreover the labelling of $\cA$ induced by this construction is
identical to that produced by $\cC$, but shifted by $(n,-n)$. Hence $\cA$
must be
$(n,-n)$-periodic as well as rigidly periodic. Thus $\cA$ is doubly periodic.
\end{proof}

Lemma~\ref{rigid implies doubly periodic} therefore establishes the equivalence
of rigid periodicity and double periodicity.

\subsection{Periodic Walls, Sectors and Roots}\label{periodic W,S and R}

We note that the notions of periodic apartments and minimal periodicity can be
generalized so that they apply to walls, sectors and roots in $\D$ by imposing
various conditions on the integers $i,j,k,l,i+r,j+s,k+r,l+s$ in the equation
\[
a_{i,j}^{-1}a_{k,l} = a_{i+r,j+s}^{-1}a_{k+r,l+s}  .
\]
The set of pairs $(r,s)\in\ZZ^2$ satisfying such an equation is then no longer
necessarily a subgroup of $\ZZ^2$.

Without loss of generality, we may assume that the vertex $a_{0,0}$ is on
the wall,
or is the base vertex of the sector, or is on the boundary of the root in
question.
We generalize the notions of rigid periodicity and single periodicity to the
context of sectors and roots.

It is not in general true that a periodic root extends to a periodic
apartment or
that such an apartment is unique if it exists. However, the arguments used
in the
proof of Lemma~\ref{rigidity} show that a rigidly periodic sector determines a
unique rigidly periodic apartment. We complete the analysis of periodic sectors
with the following result.

\begin{lem}\label{complete description of periodic sectors}
A periodic sector which is not rigidly periodic determines a unique
periodic root.
\end{lem}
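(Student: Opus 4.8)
The plan is to prove existence and uniqueness separately, using the guiding principle that single periodicity in a wall direction is just strong enough to reconstruct a half-plane from the given sector, but never strong enough to cross its bounding wall. Label the sector $\cS$ with respect to itself, so its vertices are $\{a_{i,j}:i,j\geq 0\}$ with base vertex $a_{0,0}$. First I would pin down the periodicity: two linearly independent periodicity vectors would render $\cS$ doubly periodic, and a periodicity $(r,s)$ with $r,s\neq 0$ and $s\neq -r$ would render it rigidly periodic, both of which are excluded by hypothesis. Hence all periodicity vectors of $\cS$ lie in a single wall direction. After interchanging the two sector walls if necessary (which swaps the directions $(1,0)$ and $(0,1)$ and fixes $(1,-1)$), the periodicity direction is therefore either $(1,0)$, lying along a sector wall, or the transverse wall direction $(1,-1)$.

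For existence, let $\cW$ be the wall through $a_{0,0}$ in the periodicity direction and let $\cR$ be the closed half-plane bounded by $\cW$ that contains $\cS$. I would reconstruct $\cR$ from $\cS$ exactly as in the proof of Lemma~\ref{rigidity}. Since the periodicity direction is parallel to $\cW$, translation along it sweeps out each line parallel to $\cW$; on every such line meeting $\cS$ the segment already known extends, via the periodicity relations, to the entire line. Writing each new vertex as a product of vertices already known to lie in $\cS$, precisely as in Corollary~\ref{convex hull enough}, shows that every vertex of $\cR$ is determined, that each of its chambers is a genuine chamber of $\D$, and that the reconstructed labelling again carries the given periodicity. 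Hence $\cR$ is a periodic root containing $\cS$.

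For uniqueness, the crucial point is that the periodicity direction fixes the coordinate measuring displacement across $\cW$ (namely $j$ when the direction is $(1,0)$, and $i+j$ when it is $(1,-1)$); consequently the periodicity relations never reach a vertex on the far side of $\cW$, and the region of $\D$ determined by $\cS$ is exactly $\cR$. Any periodic root determined by $\cS$ therefore lies within $\cR$, and being a half-plane that contains the cone $\cS$, and hence reaches the wall $\cW$, it must coincide with $\cR$. It is consistent with this that a periodic root is automatically periodic \emph{along} its bounding wall, since a translation preserves a half-plane only when it is parallel to the boundary. Note that larger periodic roots containing $\cR$ may still contain $\cS$ but are not determined by it, which is why the hypothesis that the root be determined by $\cS$ cannot be dropped.

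I expect the main obstacle to be the bookkeeping in the existence step: verifying that the vertices produced by the periodicity relations genuinely assemble into a root of $\D$, rather than merely a formal labelling. As in Lemma~\ref{rigidity} this reduces to the observation that every reconstructed chamber is a translate of a chamber already present in $\cS$, together with the convexity of roots; but the transverse case $(1,-1)$ requires slightly more care, since there $\cS$ meets the bounding wall $\cW$ only at the single vertex $a_{0,0}$.
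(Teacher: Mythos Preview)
Your argument is correct and follows essentially the same route as the paper: identify that a non-rigid periodicity lies in one of the three wall directions, then use the translational symmetry to extend $\cS$ to the half-plane bounded by the wall $\cW$ through $a_{0,0}$ in that direction. The paper's presentation is more compact—it sets $g=a_{r,0}a_{0,0}^{-1}$ (respectively $a_{0,s}a_{0,0}^{-1}$, $a_{r,-r}a_{0,0}^{-1}$) and takes the convex hull of the translates $g^n\cS$ in any common apartment—whereas you unpack the same construction at the level of vertex labels \`a la Lemma~\ref{rigidity}; but the underlying mechanism is identical, and indeed your appeal to Corollary~\ref{convex hull enough} is itself a convex-hull argument. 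Your separate treatment of uniqueness and your remark that in the transverse case $\cS$ meets $\cW$ only at $a_{0,0}$ (so one genuinely needs the convex hull rather than a naive union of translates) make explicit a point the paper leaves implicit.
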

\begin{proof}
Suppose that $\cS$ is a periodic sector whose vertices are labelled with respect
to its base vertex, $a_{0,0}$ in such a way that $a_{i,j}\in\cS$ for all
$i,j\in\NN$.
Suppose also that $\cS$ is not rigidly periodic.

If $\cS$ is $(r,0)$-periodic or $(0,s)$-periodic for some $r$ or $s\in\ZZ$ let
$g=a_{r,0}a_{0,0}^{-1}$ or $g=a_{0,s}a_{0,0}^{-1}$ respectively. If $\cS$ is
$(r,-r)$-periodic for some $r\in\NN$, let $g=a_{r,-r}a_{0,0}^{-1}$.

The sectors $g^n\cS$ for $n\in\ZZ$ then lie in a common apartment since the
labelling of the directed edges in $\cS$ by generators of $\G$ have a
symmetry in
the required direction. There may be several apartments containing the sectors
$g^n\cS$ for $n\in\ZZ$. However the convex hull of the sectors $g^n\cS$ for
$n\in\ZZ$ in any apartment containing them will determine a unique periodic root
$\cR$. In particular, the wall bounding $\cR$ and its translates in $\cR$
are periodic.
\end{proof}

\section{Periodic Limit Points and Their Properties}

\subsection{Periodic Limit Points and Their Behaviour under the Action of $\G$}
We begin by using the notion of periodicity in $\D$ to define a useful subset of
$\Om$.
\begin{defns}
Call a boundary point $\w\in\Om$ a {\bf periodic limit point} if it has a
periodic
representative sector $\cS(\w)$, say. We refer to such a sector $\cS(\w)$ as a
{\bf periodic representative} of $\w$. Note that we do not assume that every
periodic limit point has a periodic representative $\cS(\w)$ with $e\in\cS(\w)$.
We denote the set of periodic limit points by {\boldmath{$\Pi$}}.
\end{defns}

A most intriguing property of periodic limit points, proved forthwith, is
that they
are the only boundary points which can be stable under the action of an element of
$\G$.

\begin{prop}\label{only boundary points are stable}
Suppose $\w\in\Om$ satifies $g\w=\w$ for some $g\in\G\setminus\{ e\}$. Then we
must have~$\w\in\Pi$.
\end{prop}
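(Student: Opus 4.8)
The plan is to produce a periodic representative sector for $\w$ directly, using the hypothesis $g\w=\w$ together with the labelling machinery of \S\ref{labelling apartments}. I would start from the representative sector $\cS_e(\w)$ of $\w$ based at the identity vertex $e$, labelled as $(a_{i,j})_{i,j\geq 0}$ with $a_{0,0}=e$, the vertices being regarded as elements of $\G$. Because $\G$ acts by left multiplication on vertices and $g\w=\w$, the translate $g\cS_e(\w)$ is precisely the representative sector $\cS_g(\w)$ of $\w$ based at the vertex $g$; its vertices are $\{ga_{i,j} : i,j\geq 0\}$, and left multiplication by $g$ carries the oriented edge-labelling of one sector to that of the other.

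The key geometric input is that $\cS_e(\w)$ and $\cS_g(\w)$, being representatives of the same boundary point, are parallel and hence contain a common subsector $\cT$. Writing $u$ for the base vertex of $\cT$, the fact that $u$ lies in both sectors forces $u=a_{p,q}=ga_{i_0,j_0}$ for some $p,q,i_0,j_0\geq 0$. I would then use that $g$ preserves the oriented edge-labelling, so that the automorphism $g$ carries the coordinate-$(i,j)$ vertex of the subsector based at $a_{i_0,j_0}$ onto the coordinate-$(i,j)$ vertex of the subsector based at $u=a_{p,q}$. Reading off coordinates yields
\[
g\,a_{k,l} = a_{k+r,\,l+s}\qquad\text{for all } k\geq i_0,\ l\geq j_0,
\]
where $(r,s)=(p-i_0,\,q-j_0)$.

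From here the conclusion is immediate. For any two adjacent vertices $a_{k,l},a_{k',l'}$ whose coordinates are at least $(i_0,j_0)$, the relation above gives $a_{k+r,l+s}^{-1}a_{k'+r,l'+s}=(ga_{k,l})^{-1}(ga_{k',l'})=a_{k,l}^{-1}a_{k',l'}$, so $(r,s)$ is a periodicity direction (in the sense of \S\ref{periodic W,S and R}) for the subsector $\cS'$ of $\cS_e(\w)$ based at $a_{i_0,j_0}$. Moreover $(r,s)\neq(0,0)$, for otherwise $g=a_{i_0,j_0}a_{i_0,j_0}^{-1}=e$, contrary to hypothesis. Thus $\cS'$ is a periodic representative sector of $\w$, whence $\w\in\Pi$.

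I expect the main obstacle to be the careful bookkeeping in the middle step: one must check that the base vertex of the common subsector is simultaneously of the form $a_{p,q}$ and $ga_{i_0,j_0}$, and that $g$ genuinely matches coordinate $(i,j)$ to coordinate $(i,j)$. The latter rests on the observation that $g$ preserves the oriented edge-labelling and therefore sends the distinguished wall $\cW_+$ of a sector to that of its image (it cannot interchange the two bounding walls, since their panel orientations are preserved); this is exactly what legitimises the passage from ``$g$ maps one subsector onto another'' to the explicit translation relation $g\,a_{k,l}=a_{k+r,l+s}$. Once that relation is in hand, the periodicity of $\cS'$ is a one-line computation, and no assumption on the minimal period or on rigid periodicity is needed.
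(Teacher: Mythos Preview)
Your proof is correct and follows essentially the same route as the paper's: both arguments take a representative sector $\cS$ of $\w$, observe that $g\cS$ is a parallel representative, extract a common subsector, and deduce a translational symmetry in the edge-labelling. The only difference is stylistic---you track coordinates explicitly (and correctly justify the coordinate-matching via preservation of the oriented edge-labels), whereas the paper phrases the same step geometrically by choosing $v$ so that $\cS_v$ and $g\cS_v$ both lie in the original sector and hence in a common apartment.
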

\begin{proof}
Suppose $\w\in\Om$ satisfies $g\w=\w$ for some $g\in\G$. Then $g^n\w=\w$ for
all $n\in\ZZ$. Let $\cS=\cS(\w)$ be any representative sector of $\w$. Thus
$g^n\cS$ is a representative of $\w$ for every $n\in\ZZ$. Consider first the
representatives $\cS$ and $g\cS$ of $\w$. Note that all the vertices in
$g\cS$ are
of the form $gv$ where $v\in\G$ is a vertex in $\cS$.

Since $\cS$ and $g\cS$ are parallel they must contain a common subsector.
Suppose $\cS_{gv}=\cS_{gv}(\w)\subseteq \cS\cap g\cS$ is such a common
subsector. Since $\cS_{gv}(\w)\subseteq g\cS$, we must have
$g^{-1}\cS_{gv}(\w)\subseteq \cS$, which is to say $\cS_{v}(\w)\subseteq \cS$.
Thus the sectors $\cS_{v}=\cS_{v}(\w)$ and $g\cS_v=\cS_{gv}$ are both contained
in $\cS$ and hence are contained in a common apartment. Since $\cS_{v}$ and
$g\cS_v$ are parallel they must intersect in a sector which is a translate
of each
of them. Thus the labelling of $\cS_v$ must have a $g$-translational symmetry.
The sector $\cS_v$ is therefore a periodic representative of $\w$ and hence
$\w\in\Pi$, thus establishing the claim.
\end{proof}

\begin{defns}
If $\w\in\Om$ has a periodic representative $\cS(\w)$ which is in fact rigidly
periodic, Lemma~\ref{rigidity} and Corollary~\ref{cor 1} enable us to
associate to
$\w$ a unique rigidly periodic apartment $\cA(\w)$. In this case we say that
$\w$ is a {\bf rigidly periodic limit point} and we denote by
{\boldmath{$\La$}} the
subset of $\Om$ consisting of all rigidly periodic limit points.
\end{defns}

\begin{remark}
Given a particular $g\in\G$ there are at most a countable number of rigidly
periodic apartments containing $g$. Since $\G$ is countable this means that
there
are at most a countable number of distinct rigidly periodic apartments in
$\D$ and
therefore $\La$ is a countable subset of $\Om$. Note that $\Om$ is
uncountable by
an argument analogous to that of Remark~{uncountable apartments}. There must
therefore exist boundary points $\w\in\Om$ which are not rigidly periodic limit
points.
\end{remark}

\subsection{Topological Considerations}

Despite the fact that $\Om\setminus\La\neq\emptyset$, we prove that $\La$, and
hence $\Pi$, is a dense subset of $\Om$.

\begin{prop}\label{periodic limit points everywhere}
Every open set $U_{v}^u(\w)\subseteq\Om$ contains rigidly periodic limit points
whose periodic representatives have arbitrarily large minimal period. Moreover
rigidly periodic limit points can be found whose periodic representatives are
$(r,s)$-periodic with
$r,s>0$.
\end{prop}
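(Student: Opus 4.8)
The plan is to reduce to the base vertex $e$ and then graft the construction of Theorem~\ref{existence} onto the given initial segment. First I would exploit the $\G$-invariance of the set $\La$ of rigidly periodic limit points. Since $\G$ acts simply transitively on the vertices, choose $g\in\G$ with $gv=e$; then $gU_v^u(\w)=U_{e}^{gu}(g\w)$, and because $g\cA$ is rigidly periodic with the same periodicity lattice as $\cA$ for every apartment $\cA$, a rigidly periodic limit point of the prescribed type in $U_{e}^{gu}(g\w)$ pulls back under $g^{-1}$ to one of the same type, and same minimal period, in $U_v^u(\w)$. Hence it suffices to treat the case $v=e$, and I write $I^u(\w)=I_e^u(\w)$, whose base chamber I call $C$.

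Next I would regard $I^u(\w)$ as a \emph{fixed} finite subcomplex and build a rigidly periodic apartment around it. Extend $I^u(\w)$ to a sector $\cS$ based at $e$ with base chamber $C$, choosing the vertices beyond $I^u(\w)$ freely, and extend $\cS$ to a non-periodic apartment $\cA=(a_{i,j})$; this is possible because only countably many apartments through $e$ are rigidly periodic, while there are uncountably many (cf.\ Remark~\ref{uncountable apartments}). Fix any $m$ larger than any prescribed bound and large enough that $a_{m,m}$ lies beyond $I^u(\w)$, so that $I^u(\w)\subseteq R$, where $R$ is the convex hull of $C$ and $a_{m,m}$. The edge $a$ on $C$ opposite $e$ is forced by $I^u(\w)$, but the vertex $a_{m,m}$ and the edge $a'$ opposite it lie outside the fixed region, so the freedom in the continuation of $\cS$ lets me arrange $a\neq a'$ and that $R$ is not contained in a rigidly periodic apartment of minimal period less than $m$ — exactly the genericity needed to launch Theorem~\ref{existence}.

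Now I would run the construction of Theorem~\ref{existence} verbatim: it produces, by tesselating an extension $R'\supseteq R$ of the diamond, a rigidly periodic apartment $\cA'$ which is $(m+2,m+2)$-periodic with minimal period at least $m$. Since $R$ is left intact inside $\cA'$ with its original labelling, the vertex $e$ survives in $\cA'$ and its sector $\cS_e(\w')$ in the positive direction contains $R$, hence contains $I^u(\w)$ as a subcomplex with base chamber $C$. The boundary point $\w'$ determined by $\cS_e(\w')$ therefore satisfies $I^u(\w)\subseteq\cS_e(\w')$, i.e.\ $\w'\in U_e^u(\w)$, and $\w'\in\La$ since $\cA'$ is rigidly periodic (Lemma~\ref{rigidity} and Corollary~\ref{cor 1}). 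As $\cA'$ is $(m+2,m+2)$-periodic we have $r=s=m+2>0$, giving the ``moreover'' assertion, and letting $m\to\infty$ yields arbitrarily large minimal period; pulling back by $g^{-1}$ finishes the proof.

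The one genuinely delicate point, which I expect to be the main obstacle, is verifying that the genericity conditions of Theorem~\ref{existence} can be secured \emph{without disturbing the fixed finite region} $I^u(\w)$. This is precisely why I would extend $I^u(\w)$ to a freely chosen sector rather than use the given representative $\cS_e(\w)$, and why I push the choice of $a_{m,m}$ and $a'$ out to a large radius $m$; once this is arranged, everything else is a transcription of the construction already established.
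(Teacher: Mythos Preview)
Your proposal is correct and follows essentially the same approach as the paper: translate to the base vertex (the paper writes $v^{-1}I_v^u(\w)$, you write $gU_v^u(\w)$ with $gv=e$, which is the same thing since $\G$ acts simply transitively), feed the translated initial segment into the construction of Theorem~\ref{existence} to produce an $(m+2,m+2)$-periodic apartment containing it, and translate back. The paper's proof is a three-line sketch that simply invokes ``the proof of Theorem~\ref{existence}''; your version spells out the genericity step (choosing $m$ large enough and a non-periodic extension so that $a\ne a'$ and $R$ avoids small-period apartments) that the paper leaves implicit, but the underlying argument is identical.
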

\begin{proof}
Given any boundary point $\w\in\Om$, any vertex  $x\in\cS_v(\w)$, and any
$m\in\NN$, we can use the proof of Theorem~\ref{existence} to construct a
rigidly
periodic apartment $\cA$ containing $v^{-1}I_{v}^u(\w)$ and whose minimal
periodicity is greater than $m$. The apartment $\cA'=v\cA$ is then a rigidly
periodic apartment and the sector $\cS_v\subset\cA'$ containing
$I_{v}^u(\w)$ defines a rigidly periodic limit point $\w'$ satisfying the
required
conditions.
\end{proof}

\begin{cor}
$\La$ and $\Pi$ are dense $\G$-invariant subsets of $\Om$.
\end{cor}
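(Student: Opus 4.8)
The plan is to establish the two assertions separately: density follows almost immediately from Proposition~\ref{periodic limit points everywhere}, while $\G$-invariance follows from the label-preserving nature of the $\G$-action together with the observations recorded above that the classes of periodic and of rigidly periodic apartments are each $\G$-invariant.

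For density, I would first recall that the sets $U_v^u(\w)$ constitute a basis of open and closed sets for the topology on $\Om$, as noted in \S\ref{boundary and topology}. Proposition~\ref{periodic limit points everywhere} asserts that every such basic open set contains a rigidly periodic limit point, i.e.\ a point of $\La$. Hence $\La$ meets every nonempty open subset of $\Om$, which is precisely the statement that $\La$ is dense. Since a rigidly periodic representative sector is in particular periodic, we have $\La\subseteq\Pi$, and therefore $\Pi$ is dense as well. This part requires no work beyond invoking the preceding proposition.

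For $\G$-invariance, the key observation is that left multiplication by any $g\in\G$ preserves the edge labels introduced in \S\ref{labelling apartments}: the edge from a vertex $u$ to $ua_x$ carries the label $a_x$, and its image under $g$ runs from $gu$ to $(gu)a_x$, which again carries the label $a_x$. Consequently, if $\cS=\cS_v(\w)$ is a periodic representative of a point $\w\in\Pi$, then $g\cS=\cS_{gv}(g\w)$ is a representative of $g\w$ whose labelling inherits the translational symmetry of $\cS$ (concretely, $(ga_{i,j})^{-1}(ga_{k,l})=a_{i,j}^{-1}a_{k,l}$, so the same periodicity relations hold); thus $g\cS$ is periodic and $g\w\in\Pi$. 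Replacing \emph{periodic} by \emph{rigidly periodic} throughout, and using the already-noted fact that $g\cA$ is rigidly periodic whenever $\cA$ is, yields the identical conclusion $g\w\in\La$ whenever $\w\in\La$. I do not anticipate any genuine obstacle here: the corollary is essentially bookkeeping that packages Proposition~\ref{periodic limit points everywhere} together with the $\G$-equivariance of periodicity already established in the discussion of periodic apartments.
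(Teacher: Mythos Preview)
Your proposal is correct and follows essentially the same approach as the paper: density from Proposition~\ref{periodic limit points everywhere} together with $\La\subseteq\Pi$, and $\G$-invariance from the observation that if $\cS$ is a (rigidly) periodic representative of $\w$ then $g\cS$ is a (rigidly) periodic representative of $g\w$. Your version simply spells out in more detail the label-preservation computation $(ga_{i,j})^{-1}(ga_{k,l})=a_{i,j}^{-1}a_{k,l}$ that the paper leaves implicit.
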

\begin{proof}
The density follows immediately from Proposition~\ref{periodic limit points
everywhere}. To see that $\La$ and $\Pi$ are both $\G$-invariant, simply
note that
if $\w\in\La$ has a periodic representative $\cS\subset\cA$ then $g\cS\subset
g\cA$ is a periodic representative of $g\w$ of the same type.
\end{proof}

\subsection{Measure-Theoretic Considerations}

In Lemma~\ref{Pi has measure zero} we show that $\nu_v(\Pi)=0$ for every
vertex $v\in\D$. Before we launch into the proof of this result we establish a
necessary technical result.\footnote{The proof of this result is not complete. It is corrected in:
P. Cutting and G. Robertson, Type III actions on boundaries of $\tilde A_n$ buildings, J. Operator Theory, 49 (2003), 25-44.}

\begin{lem}\label{measure of apartments with a periodic wall}
Let $\cW$ be any wall in $\D$. Let $\Si_{\cW}\subseteq\Om$ be the set of
boundary points which have representative sectors in any apartment containing
$\cW$. Then
\[
\nu_v\left(\Si_{\cW}\right) = 0
\]
for all vertices $v\in\D$.
\end{lem}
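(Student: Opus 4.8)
The plan is to show that the set $\Si_{\cW}$ of boundary points with a representative sector in some apartment containing the fixed wall $\cW$ is contained in a measurable set whose $\nu_v$-measure can be estimated and shown to vanish. Because the measures $\nu_u$ and $\nu_v$ are mutually absolutely continuous for all vertices, it suffices to establish the claim for a single conveniently chosen $v$; I would take $v$ to be a vertex lying on $\cW$ so that the geometry is simplest. The wall $\cW$ divides each containing apartment into two roots, and a representative sector of $\w\in\Si_{\cW}$ lying in such an apartment is parallel to a sector whose base vertex we may slide along $\cW$; the key geometric observation is that a boundary point in $\Si_{\cW}$ is forced to have its sector ``aligned'' with $\cW$ in the sense that, for large $u$ along the direction of $\cW$, the initial segment $I_v^u(\w)$ must follow the wall rather than spread into the bulk of an apartment.

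The main computation is a counting argument using the cardinalities $N_{m,n}$ and the fact that $\nu_v$ assigns mass $1/N_{m,n}$ to each of the sets $\Om_v^u$ with $u\in V_v^{m,n}$. First I would fix the direction along $\cW$ and observe that the vertices $u$ at combinatorial distance $n$ from $v$ which lie on $\cW$ (equivalently, for which the segment $I_v^u$ stays on the wall) form a set of cardinality growing only like a constant times $q^{n}$ or slower, whereas $N_{m,n}$ grows like $q^{2(m+n)}$. Concretely, the boundary points in $\Si_{\cW}$ whose sectors run along $\cW$ are covered by the sets $\Om_v^u$ for $u$ ranging over the on-wall vertices, so that
\[
\nu_v(\Si_{\cW}) \le \sum_{u \text{ on } \cW,\; d(v,u)=n} \nu_v(\Om_v^u)
= \frac{(\text{number of such } u)}{N_{0,n}}.
\]
Since the number of vertices of $\cW$ at distance $n$ is bounded independently of $n$ (a wall is a one-dimensional object, so there are only $O(1)$ such vertices), while $N_{0,n}=(q^2+q+1)q^{2(n-1)}\to\infty$, letting $n\to\infty$ forces the measure to be zero.

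The hard part, and the step requiring the most care, is the geometric reduction that legitimizes the covering above: I must argue that membership in $\Si_{\cW}$ genuinely constrains the sector $\cS_v(\w)$ to lie in the bounded neighbourhood of $\cW$, so that only the on-wall vertices $u$ contribute. This uses the convexity properties recalled in the excerpt, namely that the convex hull of a vertex and a chamber in a common apartment lies in every apartment containing both, together with the structure of the $W$-valued distance and the way a sector parallel to $\cW$ must share an unbounded subsector with a sector based on $\cW$. I expect that one has to treat separately the two possible parallel classes of sector boundaries relative to $\cW$, but in each case the estimate reduces to the same comparison of a subexponential count against the exponential growth of $N_{m,n}$, and the conclusion $\nu_v(\Si_{\cW})=0$ follows by taking $n$ arbitrarily large. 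Finally, transferring from the chosen on-wall vertex $v$ to an arbitrary vertex is immediate from the mutual absolute continuity of the family $\{\nu_v\}$.
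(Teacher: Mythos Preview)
Your reduction to a base vertex $v\in\cW$ and the covering estimate using $N_{0,n}\to\infty$ are exactly how the paper handles the easy case, namely those $\w\in\Si_{\cW}$ for which the sector $\cS_v(\w)$ has one of its two bounding walls lying along $\cW$. For such $\w$ one genuinely has $\w\in\Om_v^u$ with $u$ a vertex of $\cW$ at arbitrarily large distance, and the count goes through as you describe.

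The gap is in your ``geometric reduction''. It is not true that every $\w\in\Si_{\cW}$ has $\cS_v(\w)$ aligned with $\cW$ in the sense you need. If $\cA$ is an apartment containing both $\cW$ and a representative of $\w$, then $\cS_v(\w)\subset\cA$, but among the six sectors of $\cA$ based at $v$ only four have a sector wall along $\cW$; the remaining two are \emph{transverse} to $\cW$, meeting it only at $v$. For such $\w$ the vertices $u$ deep in $\cS_v(\w)$ move away from $\cW$, so your covering by $\Om_v^u$ with $u$ on the wall simply does not capture them, and the inequality you wrote down is false for this part of $\Si_{\cW}$. Your closing remark that ``in each case the estimate reduces to the same comparison of a subexponential count against the exponential growth of $N_{m,n}$'' is therefore unjustified: for the transverse sectors there is no obvious subexponential family of on-wall vertices to compare against.

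The paper treats this transverse case separately (its Case~2) by a different device: in any apartment $\cA\supset\cW$ one reflects in the wall of $\cA$ through $v$ adjacent to both the transverse sector and $\cW$, obtaining a bijection $r_\Om$ from the transverse boundary points onto points of the aligned (Case~1) type. One then argues that $r_\Om$ is measure-preserving because the reflection permutes the sets $\Om_v^u$ within each $V_v^{m,n}$. You should be aware, however, that the paper itself flags this argument as incomplete in a footnote (the map $r_\Om$ depends on the apartment $\cA$, which is not unique, and its global measurability and measure-preservation need more care); a corrected argument appears in later work of Cutting and Robertson. So the transverse case really is the crux, and it needs an idea beyond the covering estimate you propose.
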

\begin{proof}
Since the measures $\nu_u$ and $\nu_v$ are mutually absolutely continuous for
all vertices $u,v\in\D$, it is sufficient to show that
$\nu_v\left(\Si_{\cW}\right) =
0$ for some vertex $v\in\D$.

Fix a vertex $v=a_{0,0}\in\cW$ and label $\cW$ with respect to this vertex as
described in~\S\ref{periodic W,S and R}. Given any apartment $\cA\supset\cW$,
a labelling of $\cA$ can be found which is compatible with this labelling
of $\cW$.
We consider the two distinct cases that arise.

\par\noindent{\em Case 1: The boundary points $\w\in\Si_{\cW}$ which have
representative sectors $\cS_v(\w)$ with one sector wall contained in $\cW$.}

All such boundary points are either contained in $\Om_v^{a_{i,i}}$ or
$\Om_v^{a_{-i,-i}}$ for arbitrarily large $i\in\NN$. Hence the set of such
boundary
points has trivial measure.

\par\noindent{\em Case 2: The boundary points $\w\in\Si_{\cW}$ which have
representative sectors $\cS_v(\w)$ neither of whose sector walls is contained in
$\cW$.}

Geometrically we are in the situation depicted in Figure~\ref{second case}

\refstepcounter{picture}
\begin{figure}[htbp]
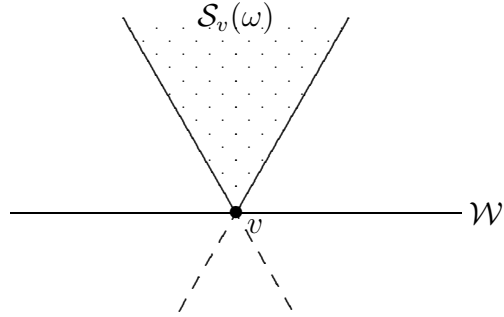
\label{second case}
{}\hfill
\beginpicture
\setcoordinatesystem units <0.5cm,0.866cm>    
\setplotarea x from -7 to 7, y from -2 to 4         
\put {$\bullet$} at 0 0
\put {$v$}             [t l]  at    0.3   -0.1
\put {$\cS_v(\w)$}      at    0 3
\put {$\cW$}         [l]    at    6.2  0
\putrule from -6    0     to  6    0
\setlinear
\plot -3  3   0 0  3  3 /
\setdashes
\plot -1.5 -1.5   0 0   1.5 -1.5 /
\vshade -3 3 3 <z,z,z,z> 0 0 3 <z,z,z,z> 3 3 3 /
\endpicture
\hfill{}
\caption{Geometric interpretation of second case.}
\end{figure}

The convex hull of $\cS=\cS_v(\w)$ and $\cW$ in any apartment containing them
both determines a root $\cR$ which is unique in the sense that it is
independent of
the particular apartment chosen. In $\cR$, denote by $\cS_1=\cS_v(\w_1)$ the
unique sector based at $v$ such that $a_{i,i}\in\cS_1$ for all $i\in\NN$. The
boundary point $\w_1$ determined by $\cS_1$ satisfies the conditions for the
first case considered and hence the set of all boundary points obtained in this
manner belong to a set of measure zero.

There is a bijective map $r_{\Om} : \w \mapsto\w_1$ which is implemented in any
apartment $\cA$ containing $\cR$ via a reflection in the wall separating
$\cS$ from
$\cS_1$ (see Figure~\ref{reflection}).

\refstepcounter{picture}
\begin{figure}[htbp]
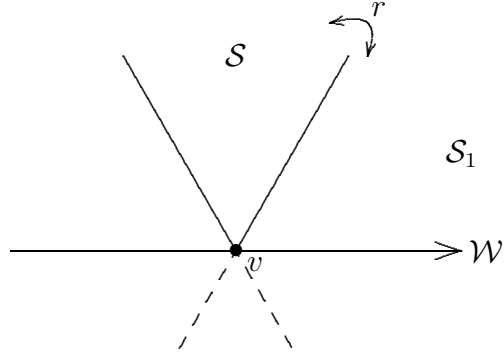
\label{reflection}
{}\hfill
\beginpicture
\setcoordinatesystem units <0.5cm,0.866cm>    
\setplotarea x from -7 to 7, y from -2 to 4         
\put {$\bullet$} at 0 0
\put {$v$}             [t l]  at    0.3   -0.1
\put {$\cS$}                 at    0 3
\put {$\cS_1$}             at    6 1.5
\put {$\cW$}         [l]    at    6.2  0
\put {$r$}              [bl]  at    3.6 3.6
\putrule from -6    0     to  6    0
\setlinear
\plot -3  3  0 0  3  3 /
\arrow <10pt> [.3,.67] from 5.5 0 to 6 0
\arrow <5pt> [.3,.67] from 2.6 3.51  to 2.5 3.5
\arrow <5pt> [.3,.67] from 3.545 3.1 to 3.5 3
\setquadratic
\plot 2.5 3.5  3.5 3.5   3.5 3 /
\setdashes
\setlinear
\plot -1.5 -1.5   0 0  1.5 -1.5 /
\endpicture
\hfill{}
\caption{Implementation of correspondence in $\cA$.}
\end{figure}

We note that
\[
\nu_v\left(\Om_v^u\right) = \nu_v\left(\Om_v^{r(u)}\right)
\]
for all vertices $u\in\cA$ since $r$ is a distance-preserving transformation of
$\cA$. Hence $r_{\Om}$ is a measure-preserving map.

Since the set of all boundary points covered by the first case has trivial
measure,
the same must therefore be true of the set of boundary points covered by the
second case.
\end{proof}

We note that there are only a countable number of periodic walls in $\D$.
This is
because $\G$  is countable so that we have a countable number of choices for the
base vertex $a_{0,0}\in\cW$ and a countable number of choices for the elements
$g\in\G$ such that $g\cW=\cW$. We can therefore deduce the following
corollary of
Lemma~\ref{measure of apartments with a periodic wall}.

\begin{cor}\label{Sigma has measure zero}
Let $\Si\subseteq\Om$ be the set of boundary points which have representative
sectors in an apartment containing a periodic wall. Then
\[
\nu_v\left(\Si\right) = 0
\]
for all vertices $v\in\D$.
\end{cor}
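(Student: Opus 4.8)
The plan is to reduce Corollary~\ref{Sigma has measure zero} to Lemma~\ref{measure of apartments with a periodic wall} by a standard countable-union argument, exploiting the countability of periodic walls in $\D$. The key observation, stated just before the corollary, is that there are only countably many periodic walls: each periodic wall $\cW$ can be described by a choice of base vertex $a_{0,0}\in\cW$ (countably many, since the vertices of $\D$ are indexed by the countable group $\G$) together with a choice of an element $g\in\G$ with $g\cW=\cW$ (again countably many). Enumerating these periodic walls as $\cW_1,\cW_2,\dots$, I would first note that the set $\Si$ of boundary points having a representative sector in an apartment containing some periodic wall decomposes as
\[
\Si = \bigcup_{k\in\NN}\Si_{\cW_k},
\]
where $\Si_{\cW_k}$ is exactly the set defined in Lemma~\ref{measure of apartments with a periodic wall} for the wall $\cW_k$.

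Having established this decomposition, the proof is immediate from countable subadditivity of the measure $\nu_v$. For each vertex $v\in\D$ and each $k\in\NN$, Lemma~\ref{measure of apartments with a periodic wall} gives $\nu_v(\Si_{\cW_k})=0$. Since $\nu_v$ is a Borel probability measure, and each $\Si_{\cW_k}$ is $\nu_v$-measurable (being a countable union of the basic open-and-closed sets $U_v^u(\w)$ described in \S\ref{boundary and topology}), subadditivity yields
\[
\nu_v(\Si) \leq \sum_{k\in\NN}\nu_v(\Si_{\cW_k}) = 0,
\]
so $\nu_v(\Si)=0$ for every vertex $v\in\D$.

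I do not expect any genuine obstacle here; the only point requiring a moment's care is the justification that $\Si$ really is the countable union $\bigcup_k\Si_{\cW_k}$ rather than something larger. This is simply a matter of unwinding the definitions: a boundary point $\w$ lies in $\Si$ precisely when it has a representative sector inside \emph{some} apartment $\cA$ that contains \emph{some} periodic wall $\cW$, and that wall, being periodic, must be one of the $\cW_k$; hence $\w\in\Si_{\cW_k}$ for that $k$. Conversely each $\Si_{\cW_k}\subseteq\Si$ by definition. Thus the verification that the enumeration of periodic walls is genuinely countable, already carried out in the remark preceding the corollary, is the substantive input, and the corollary itself follows at once.
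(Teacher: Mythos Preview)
Your proposal is correct and matches the paper's approach exactly: the paper does not even write out a proof for this corollary, simply noting before the statement that there are only countably many periodic walls in $\D$ and declaring the result a corollary of Lemma~\ref{measure of apartments with a periodic wall}. Your argument spells out precisely the countable-union and subadditivity reasoning the paper leaves implicit. One minor quibble: the parenthetical claim that each $\Si_{\cW_k}$ is a countable union of basic sets $U_v^u(\w)$ is not obviously correct as stated and is not needed---measurability of $\Si_{\cW_k}$ is already implicit in Lemma~\ref{measure of apartments with a periodic wall}, so you can simply drop that justification.
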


We are now in a position to prove the following useful result.

\begin{lem}\label{Pi has measure zero}
Given any vertex $v\in\D$, $\nu_v(\Pi)=0$.
\end{lem}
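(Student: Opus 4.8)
The goal is to show $\nu_v(\Pi)=0$ for every vertex $v$, where $\Pi$ is the set of periodic limit points. By mutual absolute continuity of the measures $\{\nu_v\}$, it suffices to prove this for a single convenient vertex, say $v=e=a_{0,0}$. The plan is to decompose $\Pi$ into pieces that we have already shown to be null, reducing everything to the results established above, namely Corollary~\ref{Sigma has measure zero} and the countability of $\La$.

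First I would split $\Pi$ according to the two structural cases already introduced for periodic apartments. If $\w\in\Pi$ has a periodic representative sector $\cS(\w)$ which happens to be \emph{rigidly} periodic, then $\w\in\La$. Since $\La$ is a countable set (by the remark following the definition of rigidly periodic limit points, using that $\G$ is countable and hence there are only countably many rigidly periodic apartments) and the measures $\nu_v$ are non-atomic, we get $\nu_v(\La)=0$. The non-atomicity follows from the description of the measure in~\S\ref{measure space omega}: $\nu_v(\Om_v^u)=1/N_{m,n}$ for $u\in V_v^{m,n}$, and these measures tend to $0$ as $m+n\to\infty$ while each point lies in arbitrarily small such sets, so every singleton has measure zero. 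Thus the rigidly periodic part of $\Pi$ is null.

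Second, I would treat the points $\w\in\Pi$ whose periodic representative is \emph{not} rigidly periodic. Here the key is Lemma~\ref{complete description of periodic sectors}: a periodic sector which is not rigidly periodic determines a unique periodic root $\cR$, and the wall bounding $\cR$ together with its translates are periodic. Consequently such a $\w$ has a representative sector lying in an apartment that contains a periodic wall, so $\w\in\Si$ in the notation of Corollary~\ref{Sigma has measure zero}. That corollary gives $\nu_v(\Si)=0$. Hence the non-rigidly-periodic part of $\Pi$ is contained in a null set as well. Combining the two cases, $\Pi=(\Pi\cap\La)\cup(\Pi\setminus\La)\subseteq\La\cup\Si$, and both $\La$ and $\Si$ are $\nu_v$-null, so $\nu_v(\Pi)=0$.

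The step I expect to carry the real weight is the second one, the reduction of the non-rigidly-periodic case to Corollary~\ref{Sigma has measure zero}. One must check carefully that a non-rigidly-periodic periodic \emph{limit point} $\w$ genuinely inherits a periodic representative sector in the sense required to invoke Lemma~\ref{complete description of periodic sectors}, and that the resulting periodic root really does sit inside an apartment containing a periodic wall so that $\w\in\Si$. The first case is comparatively routine once non-atomicity is noted, and the overall logic is simply the set inclusion $\Pi\subseteq\La\cup\Si$ together with the two null-set results.
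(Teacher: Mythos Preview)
Your proposal is correct and follows essentially the same route as the paper: split $\Pi$ into $\La$ and $\Pi\setminus\La$, dispose of $\La$ by countability and non-atomicity, and show $\Pi\setminus\La\subseteq\Si$ via Lemma~\ref{complete description of periodic sectors} so that Corollary~\ref{Sigma has measure zero} finishes the job. Your closing worries are unfounded: membership in $\Pi\setminus\La$ guarantees by definition a periodic but non-rigidly-periodic representative sector, and the periodic root produced by the lemma has a periodic boundary wall, so any apartment containing that root contains a periodic wall, giving $\w\in\Si$.
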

\begin{proof}
Since $\La$ is countable and points have measure zero, $\nu_v(\La)=0$ for
all vertices $v\in\D$.

Now consider the set $\Pi\setminus\La$. This consists of periodic limit points
whose periodic representatives are not rigidly periodic. By
Lemma~\ref{complete description of periodic sectors} any
$\w\in\Pi\setminus\La$ has a periodic representative $\cS(\w)$ which determines
a periodic root $\cR$ which contains periodic walls. Hence every element
$\w\in\Pi\setminus\La$ has a representative in an apartment containing a
periodic wall. Thus $\Pi\setminus\La\subseteq\Si$ and therefore
$\nu_v\left(\Pi\setminus\La\right)=0$ for all vertices $v\in\D$
by Corollary~\ref{Sigma has measure zero}.

We deduce that $\nu_v(\Pi)=0$ as required.
\end{proof}

Proposition~\ref{only boundary points are stable} and Lemma~\ref{Pi has measure
zero} have some immediate consequences.

\begin{prop}\label{measure-theoretic freeness}
The action of $\G$ on $\Om$ is measure-theoretically free, i.e.
\[
\nu_v\left(\{ \w\in\Om : g\w= \w\}\right) = 0
\]
for all vertices $v\in\D$ and elements $g\in\G\setminus\{ e\}$.
\end{prop}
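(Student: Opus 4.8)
The plan is to derive this as an immediate consequence of the two preceding results, since the substantive work has already been carried out. Fix any vertex $v\in\D$ and any element $g\in\G\setminus\{e\}$, and consider the fixed-point set $\{\w\in\Om : g\w=\w\}$.

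First I would invoke Proposition~\ref{only boundary points are stable}, which asserts precisely that any boundary point $\w$ satisfying $g\w=\w$ for some nontrivial $g\in\G$ must be a periodic limit point. This gives the set inclusion
\[
\{\w\in\Om : g\w=\w\}\subseteq\Pi .
\]
Next I would apply Lemma~\ref{Pi has measure zero}, which states that $\nu_v(\Pi)=0$ for every vertex $v\in\D$. By monotonicity of the measure $\nu_v$, the displayed inclusion then yields
\[
\nu_v\left(\{\w\in\Om : g\w=\w\}\right)\leq\nu_v(\Pi)=0 ,
\]
and hence the fixed-point set has trivial $\nu_v$-measure. Since $v$ and $g\neq e$ were arbitrary, this establishes measure-theoretic freeness.

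The one point worth flagging is that there is essentially no obstacle remaining at this stage: all the difficulty has been absorbed into the earlier arguments, namely the structural dichotomy of Proposition~\ref{only boundary points are stable} (stable points are periodic) and the measure estimate of Lemma~\ref{Pi has measure zero} (periodic limit points form a null set). The latter in turn rests on Lemma~\ref{complete description of periodic sectors}, which routes the non-rigidly-periodic case through a periodic root and hence into $\Si$, together with Corollary~\ref{Sigma has measure zero}. So the only care needed here is to confirm that Proposition~\ref{only boundary points are stable} applies for \emph{every} nontrivial $g$ (it does, as its hypothesis is exactly $g\in\G\setminus\{e\}$) and that the measure bound is uniform in $v$ (it is, by Lemma~\ref{Pi has measure zero}). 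The proof is therefore a direct combination of the two cited results.
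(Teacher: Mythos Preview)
Your proposal is correct and matches the paper's proof essentially line for line: the paper likewise invokes Proposition~\ref{only boundary points are stable} to obtain $\{\w\in\Om : g\w=\w\}\subseteq\Pi$ and then applies Lemma~\ref{Pi has measure zero} to conclude.
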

\begin{proof}
As a result of Proposition~\ref{only boundary points are stable},
\[
\{ \w\in\Om : g\w= \w\} \subseteq \Pi
\]
for any $g\in\G\setminus\{ e\}$. Hence
\[
\nu_v\left(\{ \w\in\Om : g\w= \w\}\right) \leq \nu_v(\Pi)
\]
and the result follows from Lemma~\ref{Pi has measure zero}.
\end{proof}

\begin{prop}\label{ergodicity of G-action}
The action of $\G$ on $\Om$ is ergodic.
\end{prop}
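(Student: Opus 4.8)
The plan is to prove ergodicity by showing that any $\G$-invariant measurable function on $\Om$ must be constant almost everywhere. The natural strategy for boundary actions of this type is to exploit the fine structure of the basis sets $U_v^u(\w)$ together with the homogeneity of the measure $\nu$ and the transitivity properties of $\G$. Specifically, I would take a $\G$-invariant Borel set $E\subseteq\Om$ with $\nu(E)>0$ and aim to show $\nu(E)=1$. The key tool is a martingale/density-point argument adapted to the totally disconnected setting: since the sets $\Om_v^u$ (equivalently the $U_v^u(\w)$) generate the Borel $\sigma$-algebra and have explicitly computable measures via the cardinalities $N_{m,n}$, one can analyze the conditional expectation of $\mathbf 1_E$ onto finer and finer partitions.

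**The main steps**, in order, would be as follows. First, I would invoke a Lebesgue-type density theorem: for $\nu$-almost every $\w\in E$, the ratio $\nu(E\cap\Om_e^u)/\nu(\Om_e^u)$ tends to $1$ as $u$ runs out to infinity along the sector $\cS_e(\w)$. This is legitimate because the sets $\Om_e^u$ form a decreasing neighborhood basis at $\w$ with controlled measure ratios. Second, I would use the $\G$-quasi-invariance relation $\nu_{gv}=g\nu_v$ and the simple transitivity of $\G$ on vertices to translate a density point to the base vertex $e$: given a density point $\w$ with a long initial segment $I^u(\w)$ almost filled by $E$, choose $g\in\G$ carrying the far vertex $u$ back near $e$, so that $gE=E$ forces $E$ to nearly fill a standard neighborhood of a boundary point near $e$. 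Third, I would combine this with the boundedness of the Radon--Nikodym derivatives $d\nu_{gv}/d\nu_v$ — which follows from the mutual absolute continuity in~\cite[Lemma~2.5]{CMS} and the explicit measure formula — to conclude that $\nu(E)$ must be bounded below by a constant independent of the approximation, and then push this to $\nu(E)=1$.

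**The hard part** will be controlling the distortion of the measure under the $\G$-action precisely enough to close the density argument: one must show that the Radon--Nikodym cocycle $g\mapsto d(g^{-1}\nu)/d\nu$ is suitably tame (for instance, that it depends only on bounded combinatorial data near the base chamber) so that transporting a density point via $g$ does not destroy the near-full density. This is where the explicit values of $N_{m,n}$ and the geometry of the building enter decisively; the analogous step in the tree case~\cite{RR} is the model to follow. An alternative, possibly cleaner route is to reduce ergodicity to a \emph{double ergodicity} or boundary-theoretic statement by identifying $(\Om,\nu)$ with a Poisson-type boundary and quoting that the tail/invariant $\sigma$-algebra is trivial; but since the paper develops everything combinatorially, I would expect the intended proof to be the direct density argument sketched above, with the measure-theoretic freeness already established in Proposition~\ref{measure-theoretic freeness} ensuring there are no fixed points to obstruct the transitivity manoeuvre.
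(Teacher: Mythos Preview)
Your approach is essentially the paper's: a Lebesgue differentiation/density argument on the basis sets $\Om_e^{v_n}$ with $v_n\in V_e^{n,n}$, combined with translation by $\G$ and control of the Radon--Nikodym derivative via \cite[Lemma~2.5]{CMS}, modelled on the tree case. The two execution points you left vague are handled in the paper as follows: the ``hard part'' is resolved not by mere boundedness of the cocycle but by the exact factorization $v_n=g_nz_n$ with $g_n\in V_e^{n-1,n-1}$ and $z_n\in V_e^{1,1}$, so that $\frac{d\nu_{g_n}}{d\nu}$ is \emph{constant} on $\Om_e^{v_n}$ and cancels perfectly, yielding $f\equiv\alpha$ on a single $\Om_e^{z}$ (after passing to a subsequence where $z_n=z$, possible since $V_e^{1,1}$ is finite); the final ``push to all of $\Om$'' then requires an additional transitivity input, namely \cite[Lemma~3.1]{RS}, which guarantees that every $\Om_e^C$ can be mapped by some $g\in\G$ into $\Om_e^z$.
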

\begin{proof}
Since the measures $\nu_v$ are mutually absolutely continuous, it is enough to
prove this for the measure $\nu=\nu_e$. We must therefore show that any
$\G$-invariant function $f\in L^\infty(\Om,\nu)$ is constant a.e.

By the Lebesgue differentiation theorem (see~\cite[Chapter~8,\S36]{Zaa}
and~\cite[Theorem~8.8]{Rudin}), we have for almost all $\w_0\in\Om$
\begin{equation}\label{integral equation}
\lim_{n\rightarrow\infty} \frac{1}{\nu(\Om_e^{v_n})}
\int_{\Om_e^{v_n}} \left| f(\w)-f(\w_0)\right| d\nu(\w) =0
\end{equation}
where $v_n\in\cS_e(\w)\cap V_e^{n,n}$, so that $\Om_e^{v_n}$ is a contracting
sequence of neighbourhoods of $\w_0$.

Choose such a point $\w_0$ and write $f(\w_0)=\alpha$. For each $n\in\NN$, write
$v_n=g_nz_n$  where $g_n\in V_e^{n-1,n-1}$ and $z_n\in V_e^{1,1}$ (see
Figure~\ref{vn and gn}).

\refstepcounter{picture}
\begin{figure}[htbp]
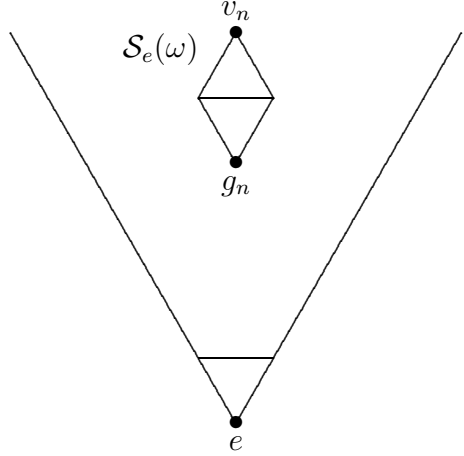
\label{vn and gn}
{}\hfill
\beginpicture
\setcoordinatesystem units <0.5cm,0.866cm>    
\setplotarea x from -6 to 6, y from -1 to 6         
\put {$\bullet$} at 0 0
\put {$\bullet$} at 0 4
\put {$\bullet$} at 0 6
\put {$e$}                 [t]  at    0  -0.2
\put {$g_n$}             [t] at    0  3.8
\put {$v_n$}             [b] at    0  6.2
\put {$\cS_e(\w)$} [b]  at   -2  5.5
\putrule from -1    1     to  1    1
\putrule from -1    5     to  1    5
\setlinear
\plot -6  6   0 0   6 6 /
\plot    0 4   -1 5   0 6   1 5   0 4 /
\endpicture
\hfill{}
\caption{Relative positions of $v_n$ and $g_n$.}
\end{figure}
Note that $\Om_{g_n}^{v_n}=\Om_{e}^{v_n}$. Now
\begin{eqnarray}
\frac{1}{\nu(\Om_e^{z_n})}\int_{\Om_e^{z_n}} \left| f(\w)-\alpha\right| d\nu(\w)
&=&
\frac{1}{\nu(\Om_e^{z_n})}
\int_{g_n\Om_e^{z_n}} \left| f(g_n^{-1}\w)-\alpha\right| d\nu(g_n^{-1}\w)
\nonumber \\
&=&
\frac{1}{\nu(\Om_e^{z_n})}
\int_{\Om_e^{v_n}} \left| f(\w)-\alpha\right| d\nu_{g_n}(\w)
\label{integral with alpha}
\end{eqnarray}
since $g_n\Om_e^{z_n} = \Om_{g_n}^{g_n z_n} = \Om_{g_n}^{v_n} = \Om_e^{v_n}$,
and $f$ is $\G$-invariant.

Furthermore, it folows from~\cite[Lemma~2.5]{CMS} that, for sufficiently large
$n\in\NN$, the Radon Nikodym derivative $\frac{d\nu_{g_n}}{d\nu}$ is constant
almost everywhere on $\Om_e^{v_n}$. Moreover,
\[
\frac{d\nu_{g_n}}{d\nu}(\w) =
\frac{\nu_{g_n}\left(\Om_e^{v_n}\right)}{\nu\left(\Om_e^{v_n}\right)} =
\frac{\nu\left(g_n^{-1}\Om_e^{v_n}\right)}{\nu\left(\Om_e^{v_n}\right)} =
\frac{\nu\left(\Om_e^{z_n}\right)}{\nu\left(\Om_e^{v_n}\right)} .
\]
The expression labelled~\eqref{integral with alpha} therefore becomes
\begin{equation}\label{altered integral with alpha}
\frac{1}{\nu(\Om_e^{v_n})}
\int_{\Om_e^{v_n}} \left| f(\w)-\alpha\right| d\nu(\w)
\end{equation}

Therefore, by~\eqref{integral equation} and~\eqref{integral with alpha},
\[
\frac{1}{\nu(\Om_e^{z_n})}\int_{\Om_e^{z_n}} \left| f(\w)-\alpha\right| d\nu(\w)
\rightarrow 0 \text{ as } n\rightarrow\infty .
\]
Now $z_n$ lies in the finite set $V_e^{1,1}$ for all $n\in\NN$, so we can
choose a
subsequence $n_k$ such that $z_{n_k}=z$ for all $k$. Then
\[
\int_{\Om_e^{z}} \left| f(\w)-\alpha\right| d\nu(\w) =0.
\]
Therefore $f(\w)=\alpha$ for almost all $\w\in\Om_e^z$.

It follows from~\cite[Lemma~3.1]{RS}  that for each possible base chamber $C=\{
e,a^{-1},b\}$ there exists an element $g\in\G$ such that
$g\Om_e^C\subseteq\Om_e^z$. Hence $f(\w)=\alpha$ a.e.($\nu$) on
$g\Om_e^C$. By the $\G$-invariance of
$f$, $f(\w)=\alpha$ a.e.($\nu_g$) on $\Om_e^C$. Since the measures
$\nu,\nu_g$ are
mutually absolutely continuous, $f(\w)=\alpha$ a.e.($\nu$) on each set $\Om_e^C$
and hence on all of $\Om$.
\end{proof}

\begin{remark}
The above proof is based on that for the case of the free group which is
contained
in~\cite[Proposition~3.9]{PS}.
\end{remark}

\begin{cor}\label{factor}
The von Neumann algebra $L^\infty(\Om)\rtimes\G$ is a factor.
\end{cor}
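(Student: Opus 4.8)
The plan is to apply the standard group–measure space criterion: an essentially free, ergodic, quasi-invariant action produces a crossed product factor, and all the required ingredients are now in hand. Write $M=L^\infty(\Om)\rtimes\G$ and identify $A=L^\infty(\Om)$ with its canonical image in $M$, with unitaries $\{u_g:g\in\G\}$ implementing the action, so that $u_g f u_g^{-1}=\alpha_g(f)$ where $\alpha_g(f)(\w)=f(g^{-1}\w)$.

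The first and central step is to show that $A$ is maximal abelian in $M$, and this is exactly where measure-theoretic freeness enters. Taking any $x\in A'\cap M$ and expanding $x=\sum_g a_g u_g$ with $a_g\in A$, the relation $xf=fx$ for all $f\in A$ forces $a_g\bigl(\alpha_g(f)-f\bigr)=0$ for every $f$ and each $g$. Choosing $f$ to separate boundary points shows that the support of $a_g$ lies, up to a $\nu$-null set, in the fixed-point set $\{\w\in\Om:g\w=\w\}$. By Proposition~\ref{measure-theoretic freeness} this set is null whenever $g\neq e$, so $a_g=0$ for all $g\neq e$ and hence $x\in A$. Therefore $A'\cap M=A$.

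Given this, the conclusion follows quickly from ergodicity. Since the centre $Z(M)$ commutes with $A$ and $A$ is maximal abelian, $Z(M)\subseteq A'\cap M=A$, so any central element is some $f\in L^\infty(\Om)$. Requiring $f$ to commute with each $u_g$ yields $\alpha_g(f)=f$ for all $g\in\G$, that is, $f$ is $\G$-invariant; Proposition~\ref{ergodicity of G-action} then forces $f$ to be constant almost everywhere. Hence $Z(M)=\CC 1$ and $M$ is a factor.

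The only real obstacle is the maximal-abelian step, where the freeness of the action must be used carefully; once $A'\cap M=A$ is established, ergodicity disposes of the centre immediately. In practice one could equally cite the standard fact that a free ergodic quasi-invariant action yields a factor and omit the Fourier-coefficient computation entirely.
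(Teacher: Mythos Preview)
Your proposal is correct and follows the same approach as the paper: both deduce the result from Proposition~\ref{measure-theoretic freeness} and Proposition~\ref{ergodicity of G-action} via the standard group--measure space criterion. The paper simply cites this criterion as~\cite[Proposition~4.1.15]{Sunder}, whereas you unpack the maximal-abelian and centre computations --- but your closing remark already notes that the citation alone suffices, which is exactly what the paper does.
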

\begin{proof}
This follows directly from Proposition~\ref{measure-theoretic freeness},
Proposition~\ref{ergodicity of G-action} and standard results in the theory
of von
Neumann algebras (see~\cite[Proposition~4.1.15]{Sunder} for example).
\end{proof}

\begin{prop}\label{amenability of G-action}
The action of $\G$ on $\Om$ is amenable.
\end{prop}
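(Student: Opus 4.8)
The plan is to establish amenability in the sense of Zimmer by producing an asymptotically invariant sequence of measurable maps from $\Om$ into the probability measures on $\G$, adapting the argument of Adams for the boundary of a tree. Recall that the action $\G\curvearrowright(\Om,\nu)$ is amenable as soon as there exist weak-$*$ measurable maps $\mu_n:\Om\rightarrow\mathrm{Prob}(\G)$, where $\mathrm{Prob}(\G)$ carries the $\ell^1$-norm $\|\cdot\|_1$, satisfying
\[
\left\| \mu_n(g\w) - g\cdot\mu_n(\w) \right\|_1 \longrightarrow 0 \qquad (n\rightarrow\infty)
\]
for every $g\in\G$ and $\nu$-almost every $\w\in\Om$; here $g\cdot\mu$ denotes the pushforward of $\mu$ under left translation by $g$, so that $g\cdot\mu$ is the uniform measure on $gS$ whenever $\mu$ is uniform on a finite set $S$.

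First I would fix the base vertex $e$ and, for each $\w\in\Om$, consider the representative sector $\cS_e(\w)$ with its labelling as in \S\ref{labelling apartments}. For each $n\in\NN$ let
\[
B_n(\w) = \{ a_{i,j}\in\cS_e(\w) : 0\leq i,j\leq n \}\subset\G
\]
be the vertex set of the initial parallelogram of $\cS_e(\w)$, which we regard as a finite subset of $\G$ since $\G$ acts simply transitively on the vertices of $\D$. Define $\mu_n(\w)$ to be the normalized counting measure on $B_n(\w)$, a probability measure supported on $(n+1)^2$ points. Since the assignment $\w\mapsto a_{i,j}$ is constant (equal to $v$) on each basic set $\Om_e^{v}$ with $v\in V_e^{i,j}$, the map $\w\mapsto\mu_n(\w)$ is measurable.

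The remaining task is the asymptotic invariance estimate. Fix $g\in\G$ and put $\w'=g\w$; then $g\cdot\mu_n(\w)$ is the normalized counting measure on $gB_n(\w)$, the initial parallelogram of the parallel sector $g\cS_e(\w)=\cS_{ge}(\w')$, while $\mu_n(\w')$ is supported on the initial parallelogram of $\cS_e(\w')$. Since these two sectors are parallel representatives of $\w'$, they share a common subsector, so after passing to a common apartment the two parallelograms become lattice squares of side $n$ whose corners differ by a lattice vector whose norm is bounded in terms of $d(e,ge)$, uniformly in $\w$. The symmetric difference of two such squares is a boundary strip of cardinality $O\big(n\,d(e,ge)\big)$, whence
\[
\left\| \mu_n(g\w) - g\cdot\mu_n(\w) \right\|_1
= \frac{\left| B_n(\w')\,\triangle\, gB_n(\w)\right|}{(n+1)^2}
\leq \frac{C(g)}{n}\longrightarrow 0 ,
\]
with $C(g)$ depending only on $g$. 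The main obstacle is precisely this F\o lner-type estimate: one must verify, using the building axioms, that two parallel sectors can be compared inside a single apartment and that the offset between their initial segments is bounded uniformly in $\w$ by a function of $d(e,ge)$. Granting this, the displayed bound gives the required asymptotic invariance and hence the amenability of the action.
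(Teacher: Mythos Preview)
Your approach is essentially the same as the paper's. Both arguments produce an asymptotically invariant sequence by taking normalized counting measures on initial pieces of the sector $\cS_e(\w)$; the paper uses the triangular region $\{g\in\cS_e(\w):|g|\le i-1\}$ of cardinality $\tfrac{i(i+1)}{2}$ and phrases the criterion in the $\ell^2$-form of Anantharaman-Delaroche (functions $f_i$ with $\sum_g|f_i(g,\w)|^2=1$), whereas you use the parallelogram $\{a_{i,j}:0\le i,j\le n\}$ of cardinality $(n+1)^2$ and the $\ell^1$-form with probability measures. These are cosmetic variations of the same F{\o}lner-type argument. The technical point you flag---that two parallel sectors $\cS_e(\w')$ and $\cS_{ge}(\w')$ agree outside a region whose size is bounded in terms of $d(e,ge)$ uniformly in $\w$---is exactly the content the paper also outsources, citing \cite[Proposition~4.2.1]{RS} rather than verifying it directly.
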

\begin{proof}
It is sufficient, by~\cite[Th\'eor\`eme~3.3b)]{Delaroche}, to show that
there exists
a sequence $\{ f_i\}_{i\in\NN}$ of real-valued functions $L^\infty(\G\times\Om)$
such that
\begin{enumerate}
\item $\displaystyle\sum_{g\in\G}\left|f_i(g,\w)\right|^2=1$ for all
$\w\in\Om$ and
$i\in\NN$
\item
$\displaystyle\lim_i\sum_{g\in\G}\overline{f_i(g,\w)}f_i(h^{-1}g,h^{-1}\w)=1$
ultraweakly in $L^\infty(\Om)$ for each $h\in\G$.
\end{enumerate}

For each $\w\in\Om$, let $f_i(\cdot,\w)$ be the characteristic function of
\[
\{g\in S_e(\w) : |g|\leq i-1\},
\]
normalized so that the first condition holds. Thus
\[
f_i(g,\w)=\begin{cases} \left(\frac{i(i+1)}{2}\right)^{-\frac{1}{2}} &
\text{ if $g\in
S_e(\w)$ and
$|g|\leq i-1$},
\\ 0 & \text{ otherwise} \end{cases}
\]
and, for each $\w\in\Om$, there are exactly $\left(\frac{i(i+1)}{2}\right)$
elements
$g\in\G$ for which
$f_i(g,\w)\neq 0$. It was proved in~\cite[Proposition~4.2.1]{RS} that this
sequence
of functions satisfies a stronger version of these conditions, since we actually
have uniform convergence in the second condition.
\end{proof}

\begin{cor}\label{hyperfinite}
The von Neumann algebra $L^\infty(\Om)\rtimes\G$ is a hyperfinite factor.
\end{cor}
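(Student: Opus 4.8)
The plan is to deduce this immediately from the two preceding results, with no new analytic work required. First I would invoke Corollary~\ref{factor}, which guarantees that $L^\infty(\Om)\rtimes\G$ is a factor; it therefore remains only to establish hyperfiniteness. The essential input is Proposition~\ref{amenability of G-action}, which asserts that the action of $\G$ on $(\Om,\nu)$ is amenable.

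The central step is to apply the standard structural theorem that the von Neumann crossed product of an injective von Neumann algebra by an amenable group action is again injective. Since $L^\infty(\Om)$ is abelian, and hence injective, and the $\G$-action is amenable by Proposition~\ref{amenability of G-action}, it follows that $L^\infty(\Om)\rtimes\G$ is injective. I would attribute this implication to the work of Anantharaman-Delaroche on amenable actions (in the measured setting it is also a consequence of the Connes, Feldman and Weiss analysis of amenable measured equivalence relations). I would then invoke Connes' theorem that, for a factor, injectivity is equivalent to hyperfiniteness, i.e.\ to approximate finite-dimensionality. Combining this equivalence with the factoriality already obtained in Corollary~\ref{factor}, I conclude that $L^\infty(\Om)\rtimes\G$ is a hyperfinite factor.

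There is essentially no obstacle remaining, since both factoriality and amenability of the action have been established beforehand. The only points that require care are the precise citation linking amenability of the action to injectivity of the crossed product, and the subsequent appeal to Connes' equivalence of injectivity and hyperfiniteness, which is what upgrades the injectivity conclusion to the desired approximately-finite-dimensional form.
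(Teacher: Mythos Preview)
Your proposal is correct and matches the paper's approach exactly: the paper states the corollary without proof, immediately after Proposition~\ref{amenability of G-action} and following Corollary~\ref{factor}, so the intended argument is precisely the one you outline---factoriality plus amenability of the action yielding injectivity (hence hyperfiniteness by Connes) of the crossed product.
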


\section{Some Algebras $L^\infty(\Om)\rtimes G$}

We refer to~\cite[Definition~4.1.2]{Sunder} for the definition of the crossed
product von~Neumann algebra $L^\infty(\Om,\nu)\rtimes G$ associated with the
action of a group $G$ on a measure space $(\Om,\nu)$.

In this section we investigate some von~Neumann algebras which arise in this
manner where $(\Om,\nu)$ is the measure space described in \S\ref{measure space
omega}, paying particular attention to $L^\infty(\Om)\rtimes\G$.  We
begin by recalling some classical definitions.

\begin{defn}
Given a group $\G$ acting on a measure space $\Om$, we define the {\bf full
group}, $[\G]$, of $\G$ by
\[
[\G]=\left\{ T\in\aut(\Om) : T \w\in \G\w \text{ for almost every
}\w\in\Om\right\}.
\]
The set $[\G]_0$ of measure preserving maps in $[\G]$ is then given by
\[
[\G]_0 =\left\{ T\in [\G] : T{\scriptstyle\circ}\nu =\nu\right\}
\]
\end{defn}

\begin{defn}
Let $G$ be a countable group of automorphisms of the measure space $(\Om,\nu)$.
Following W.~Krieger, define the {\bf ratio set} $r(G)$ to be the subset of
$[0,\infty)$ such that if $\lambda\geq 0$ then $\lambda\in r(G)$ if and
only if for
every $\epsilon>0$ and Borel set $\cE$ with $\nu(\cE)>0$, there exists a
$g\in G$
and a Borel set $\cF$ such that $\nu(\cF)>0$, $\cF\cup g\cF\subseteq\cE$ and
\[
\left| \frac{d\nu{\scriptstyle\circ}g}{d\nu}(\w) -\lambda \right| <\epsilon
\]
for all $\w\in\cF$.
\end{defn}

\begin{remark}\label{full group determines ratio set}
The ratio set $r(G)$ depends only on the quasi-equivalence class of the measure
$\nu$, see~\cite[\S I-3, Lemma 14]{HO}. It also depends only on the full
group in the
sense that
\[
[H]=[G] \Rightarrow r(H)=r(G).
\]
\end{remark}

Let $\D$ be an arbitrary triangle building of order $q$ with base vertex $e$ and
write $\nu=\nu_e$.

\begin{prop}\label{ratio set of G}
Let $G$ be a countable subgroup of $\traut\subseteq\aut(\Om)$ . Suppose there
exist an element $g\in G$ such that $d(ge,e)=1$ and a subgroup $K$ of $[G]_0$
whose action on $\Om$ is ergodic. Then
\[
r(G)=\left\{ q^{2n}: n\in\ZZ\}\cup\{ 0\right\}.
\]
\end{prop}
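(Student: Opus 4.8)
The plan is to compute the Radon--Nikodym cocycle of the action explicitly, bound the values it can take, and then use the ergodic measure--preserving subgroup $K$ to show that every admissible value is actually attained. Throughout I write $D(T,\w)=\frac{d\nu\circ T}{d\nu}(\w)$ for the cocycle of a nonsingular transformation $T$, so that $D(ST,\w)=D(S,T\w)\,D(T,\w)$ and, crucially, $D(k,\w)=1$ a.e.\ for every $k\in K\subseteq[G]_0$. Since $\nu\circ g=\nu_{g^{-1}e}$, for $g\in\traut$ one has $D(g,\cdot)=\frac{d\nu_{g^{-1}e}}{d\nu_e}$, which by \cite[Lemma~2.5]{CMS} I would evaluate, for almost every $\w$, as the limit of the ratios $\nu_{g^{-1}e}(\Om_e^{w_k})/\nu_e(\Om_e^{w_k})$ along vertices $w_k$ running out to $\w$ inside $\cS_e(\w)$. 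Note that since $K\subseteq[G]$ acts ergodically, so does $G$.

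\emph{Upper bound.} Taking the $w_k$ deep in the sector, so that both pairs of sector coordinates (from $e$ and from $g^{-1}e$) are strictly positive, the measures $\nu_e(\Om_e^{w_k})=1/N_{m_k,n_k}$ and $\nu_{g^{-1}e}(\Om_e^{w_k})=1/N_{m'_k,n'_k}$ are both governed by the interior formula $N_{m,n}=(q^2+q+1)(q^2+q)q^{2(m+n-2)}$, so their ratio is an integral power of $q^2$ and stabilises. The only $\w$ for which such deep interior vertices fail to exist are those whose representative sector eventually runs along a wall, and these form a $\nu$--null set by Lemma~\ref{measure of apartments with a periodic wall} (equivalently Corollary~\ref{Sigma has measure zero}). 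Hence $D(g,\cdot)\in\{q^{2n}:n\in\ZZ\}$ a.e.\ for every $g\in G$; since the defining condition for $\lambda\in r(G)$ forces $\lambda$ to be an a.e.\ value (to within $\epsilon$) of some such cocycle, and $\{q^{2n}:n\in\ZZ\}\cup\{0\}$ is closed in $[0,\infty)$, I conclude $r(G)\subseteq\{q^{2n}:n\in\ZZ\}\cup\{0\}$.

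\emph{Lower bound.} Put $u=g^{-1}e$, so $d(u,e)=1$. For $\w\in\Om_e^u$ the deep vertices $w_k$ lie one lattice step closer to $u$ than to $e$, whence $N$ measured from $u$ is smaller by a factor $q^2$ and $D(g,\w)=q^2$; as $\nu(\Om_e^u)=1/N_{1,0}>0$, the set $B=\{\w:D(g,\w)=q^2\}$ has positive measure. Now fix $\cE$ with $\nu(\cE)>0$. Because $K$ is ergodic and measure preserving, I would choose $k_2\in K$ with $\nu(\cE\cap k_2^{-1}B)>0$ and then $k_1\in K$ with $\nu\bigl(\cE\cap k_1 g k_2(\cE\cap k_2^{-1}B)\bigr)>0$; setting $\cF=(\cE\cap k_2^{-1}B)\cap(k_1gk_2)^{-1}\cE$ gives a positive--measure set with $\cF\cup(k_1gk_2)\cF\subseteq\cE$ on which $D(k_1gk_2,\w)=D(g,k_2\w)=q^2$ exactly. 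Since $T=k_1gk_2\in[G]$ and $r([G])=r(G)$ by Remark~\ref{full group determines ratio set}, this proves $q^2\in r(G)$.

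\emph{Conclusion.} Because $G$ acts ergodically, $r(G)\cap(0,\infty)$ is a closed multiplicative subgroup of $\RR_{>0}$ by Krieger's theorem (see~\cite{HO}); containing $q^2$ and contained in $\{q^{2n}:n\in\ZZ\}$ by the two bounds above, it must equal $\{q^{2n}:n\in\ZZ\}$. As this is not $\{1\}$, the action is of type $\tqs$, so $0\in r(G)$, yielding $r(G)=\{q^{2n}:n\in\ZZ\}\cup\{0\}$. I expect the localisation step of the lower bound to be the main obstacle: the single transformation $g$ produces the ratio $q^2$ only on the fixed set $B$, and it is the ergodicity together with the measure--invariance of $K$ that let one transport an arbitrary target $\cE$ onto $B$ and back while leaving the cocycle untouched. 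The one delicate point in the upper bound is discarding the null set of sector directions running along a wall, which is precisely what Lemma~\ref{measure of apartments with a periodic wall} supplies.
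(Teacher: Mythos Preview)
Your proof is correct and follows essentially the same route as the paper: bound the Radon--Nikodym values in $\{q^{2n}\}$, use ergodicity so that $r(G)\setminus\{0\}$ is a group, and then realise $q^2$ via the element $g$ with $d(ge,e)=1$ together with the ergodic measure-preserving subgroup $K$. The only cosmetic differences are that the paper obtains the upper bound by a direct citation of \cite[Lemma~2.5]{CMS} rather than your explicit $N_{m,n}$ computation, and it absorbs $K$ into $G$ at the outset (using $[G]=[\langle G,K\rangle]$) so that the transporting element $k_2g^{-1}k_1$ already lies in $G$, whereas you invoke $r([G])=r(G)$ at the end instead.
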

\begin{proof}
By Remark~\ref{full group determines ratio set}, it is
sufficient to prove the statement for some group $H$ such that $[H]=[G]$. In
particular, since $[G]= [\left< G, K\right> ]$ for any subgroup $K$ of
$[G]_0$, we may
assume without loss of generality that $K\leq G$.

By~\cite[Lemma~2.5]{CMS}, for each $g\in G$, $\w\in\Om$ we have
\[
\frac{d\nu{\scriptstyle\circ}g}{d\nu}(\w)\in\left\{ q^{2n}: n\in\ZZ\}\cup\{
0\right\}.
\]
Since $G$ acts ergodically on $\Om$, $r(G)\setminus\{ 0\}$ is a group. It is
therefore enough to show that $q^2\in r(G)$.  Write $x=ge$ and note that
$\nu_x=\nu{\scriptstyle\circ}g^{-1}$. By ~\cite[Lemmas~2.2 and~2.5]{CMS} we
have
\begin{equation}\label{q2 derivative}
\frac{d\nu_x}{d\nu}(\w) =q^2, \text{ for all } \w\in\Om_e^x.
\end{equation}
Let $\cE\subseteq\Om$ be a Borel set with $\nu(\cE)>0$. By the ergodicity
of $K$,
there exist $k_1,k_2\in K$ such that the set
\[
\cF=\{\w\in \cE: k_1\w\in\Om_e^x \text{ and } k_2g^{-1}k_1\w\in \cE\}
\]
has positive measure.

Finally, let $t=k_2g^{-1}k_1\in G$. By construction, $\cF\cup t\cF\subseteq
\cE$.
Moreover, since $K$ is measure-preserving,
\[
\frac{d\nu{\scriptstyle\circ}t}{d\nu}(\w)=
\frac{d\nu{\scriptstyle\circ}g^{-1}}{d\nu}(k_1\w)=
\frac{d\nu_x}{d\nu}(k_1\w) =q^2 \text{ for all } \w\in \cF
\]
by~\eqref{q2 derivative}, since $k_1\in\Om_e^x$. This proves $q^2\in r(G)$, as
required.
\end{proof}

By definition (e.g.~\cite[\S I-3]{HO}), the conclusion of
Proposition~\ref{ratio set of
G} says that the action of $G$ is of type~$\tqs$.

\begin{cor}\label{factor if free and ergodic}
If, in addition to the hypotheses for Proposition~\ref{ratio set of G}, the
action of
$G$ is free, then $L^\infty(\Om)\rtimes G$ is a factor of type~$\tqs$.
\end{cor}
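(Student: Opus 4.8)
The plan is to reduce the statement to two standard facts about crossed products, both of which become available once freeness is added to the hypotheses of Proposition~\ref{ratio set of G}. The first is that a free, ergodic, nonsingular action of a countable group on a standard probability space produces a crossed product which is a factor; the second is Krieger's theorem, to the effect that for such an action the isomorphism type of that factor is read off from the ratio set $r(G)$. Since Proposition~\ref{ratio set of G} has already computed $r(G)$, the corollary should follow immediately once these two ingredients are assembled.

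First I would verify ergodicity, which is not listed explicitly among the hypotheses but is supplied by them. The hypotheses furnish a subgroup $K\leq[G]_0$ acting ergodically on $\Om$, and since $K\subseteq[G]$ every $k\in K$ moves almost every point within its $G$-orbit; consequently any $G$-invariant Borel set is, modulo null sets, $K$-invariant, so ergodicity of $K$ forces ergodicity of $G$. This is exactly the step already used inside the proof of Proposition~\ref{ratio set of G}. With the action now known to be both free and ergodic, the factor property follows from the same standard result invoked for Corollary~\ref{factor}, namely \cite[Proposition~4.1.15]{Sunder}.

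To identify the type I would invoke the classification recalled in \cite[\S I-3]{HO}: for a free ergodic nonsingular action the ratio set is a complete invariant distinguishing types~II and~III and, within type~III, the subtype. Proposition~\ref{ratio set of G} gives
\[
r(G)=\{q^{2n}:n\in\ZZ\}\cup\{0\}=\{0\}\cup\bigl\{(1/q^2)^n:n\in\ZZ\bigr\},
\]
which is by definition the ratio set of a type~$\tqs$ action, so the factor is of type~$\tqs$. I do not expect a serious obstacle here: the genuine work was done in Proposition~\ref{ratio set of G}, and the corollary is essentially bookkeeping. The one point deserving care is the precise role of freeness: beyond guaranteeing that the crossed product is a factor, freeness is exactly what allows the ratio set $r(G)$ of the action to be identified with the Krieger invariant of the factor, so that the type of the action computed earlier really is the type of $L^\infty(\Om)\rtimes G$.
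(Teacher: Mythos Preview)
Your proposal is correct and follows essentially the same route as the paper. The paper's proof is a single sentence: having computed the ratio set in Proposition~\ref{ratio set of G}, the conclusion is immediate from \cite[Corollaire~3.3.4]{Connes73}. You have simply unpacked this, separating out the factor property (via \cite{Sunder}) from the type identification (via \cite{HO} and Krieger rather than Connes), and making explicit the ergodicity of $G$ that the paper leaves implicit; the underlying argument is the same.
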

\begin{proof}
Having determined the ratio set, this is immediate
from~\cite[Corollaire~3.3.4]{Connes73}.
\end{proof}

Thus, if we can find a countable subgroup $K\leq [G]_0$ whose action on $\Om$ is
ergodic we will have shown that $L^\infty(\Om)\rtimes G$ is a factor of
type~$\tqs$. To this end, we prove the following sufficiency condition for
ergodicity.

\begin{lem}\label{transitive K is ergodic}
Let $K$ be group which acts on $\Om$. If $K$ acts transitively on the
collection of
sets $\left\{ \Om_e^x :  x\in V_e^{m,n} \right\}$ for every pair
$(m,n)\in(\NN\times\NN)$, then $K$ acts ergodically on $\Om$.
\end{lem}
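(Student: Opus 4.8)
The plan is to show that any $K$-invariant set $\cE$ of positive measure must have full measure, by exploiting the assumed transitivity to first upgrade $K$-invariance to a statement about the measures $\nu_v(\cE)$ being "spread evenly" across the cylinder sets $\Om_e^x$, and then invoking a Lebesgue-density argument of the same flavour as the one used in Proposition~\ref{ergodicity of G-action}.

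First I would fix a $K$-invariant Borel set $\cE\subseteq\Om$ with $\nu(\cE)>0$ and aim to prove $\nu(\cE)=1$. The key observation is that, for a fixed pair $(m,n)$, the sets $\{\Om_e^x : x\in V_e^{m,n}\}$ partition $\Om$ into $N_{m,n}$ pieces, each of equal $\nu$-measure $1/N_{m,n}$ by the defining property of $\nu$ (see~\S\ref{measure space omega}). Since $K$ permutes these sets transitively and $\cE$ is $K$-invariant (mod null sets), the quantities $\nu(\cE\cap\Om_e^x)$ must all be equal as $x$ ranges over $V_e^{m,n}$. Here I must be a little careful: transitivity lets me send $\Om_e^x$ to $\Om_e^{x'}$ by some $k\in K$, and $K$-invariance of $\cE$ together with measure-preservation ($K\leq[\G]_0$, so each $k$ preserves $\nu$) gives $\nu(\cE\cap\Om_e^{x'})=\nu(k^{-1}(\cE\cap\Om_e^{x'}))=\nu(\cE\cap\Om_e^x)$. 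Consequently $\nu(\cE\cap\Om_e^x)=\nu(\cE)/N_{m,n}=\nu(\cE)\,\nu(\Om_e^x)$ for every $x\in V_e^{m,n}$ and every $(m,n)$.

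The conclusion is then a density argument. The identity $\nu(\cE\cap\Om_e^x)=\nu(\cE)\,\nu(\Om_e^x)$ says precisely that the conditional average of the indicator $\chi_\cE$ over every basic cylinder $\Om_e^x$ equals the constant $\nu(\cE)$. Since the sets $\Om_e^{v_n}$ with $v_n\in V_e^{n,n}$ form a contracting neighbourhood basis at $\nu$-almost every $\w_0$, the Lebesgue differentiation theorem (as cited in the proof of Proposition~\ref{ergodicity of G-action}) forces
\[
\chi_\cE(\w_0)=\lim_{n\to\infty}\frac{1}{\nu(\Om_e^{v_n})}\int_{\Om_e^{v_n}}\chi_\cE\,d\nu=\nu(\cE)
\]
for almost every $\w_0$. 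As $\chi_\cE$ takes only the values $0$ and $1$, the constant $\nu(\cE)$ must be $0$ or $1$; since we assumed $\nu(\cE)>0$ we conclude $\nu(\cE)=1$, establishing ergodicity. Because the $\nu_v$ are mutually absolutely continuous, it suffices to argue for $\nu=\nu_e$.

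I expect the main obstacle to be the bookkeeping in the equidistribution step, namely verifying rigorously that $K$-invariance modulo null sets, combined with measure-preservation, genuinely yields equality of $\nu(\cE\cap\Om_e^x)$ across the whole orbit rather than just absolute continuity of the transported measures. One must confirm that each $k\in K$ acts as a genuine measure-preserving transformation carrying $\Om_e^x$ onto $\Om_e^{x'}$ (not merely quasi-preserving), which is exactly the content of $K\leq[\G]_0$, and that the exceptional null sets accumulated over the countably many pairs $(m,n)$ and finitely many $x$ can be discarded without affecting the final almost-everywhere statement.
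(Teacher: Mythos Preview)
Your proof is correct and shares the paper's core step: using transitivity together with measure-preservation to obtain $\nu(\cE\cap\Om_e^x)=\nu(\cE)\,\nu(\Om_e^x)$ for every basic set $\Om_e^x$. The paper concludes more directly than you do: instead of Lebesgue differentiation, it observes that the sets $\Om_e^u$ generate the Borel $\sigma$-algebra, so the identity $\mu=c\nu$ (where $\mu(X)=\nu(X\cap\cE)$ and $c=\nu(\cE)$) extends from generators to all Borel sets, and evaluating at $X=\Om\setminus\cE$ gives $\nu(\Om\setminus\cE)=c^{-1}\nu((\Om\setminus\cE)\cap\cE)=0$. Your density argument is perfectly valid but brings in a heavier tool than necessary; the paper's route is the more elementary of the two. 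Your flag about needing $K$ to be measure-preserving is apt---the lemma statement omits it, but the paper's own computation $\nu(gX\cap X_0)=\nu(X\cap g^{-1}X_0)$ uses it implicitly just as you do.
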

\begin{proof}
Suppose now that $X_0\subseteq\Om$ is a Borel set which is invariant under $K$
and such that $\nu(X_0)>0$. We show $\nu(\Om\setminus X_0)=0$, thus establishing
the ergodicity of the action.

Define a new measure $\mu$ by $\mu(X)=\nu(X\cap X_0)$ for each Borel set
$X\subseteq\Om$. Now, for each $g\in K$,
\begin{eqnarray*}
\mu(gX)&=&\nu(gX\cap X_0) = \nu(X\cap g^{-1}X_0) \\
&\leq & \nu(X\cap X_0) + \nu(X\cap (g^{-1}X_0\setminus X_0)) \\
&=& \nu(X\cap X_0) \\
&=& \mu(X),
\end{eqnarray*}
and therefore $\mu$ is $K$-invariant.

For each $u,v\in V_e^{m,n}$ there exists a $g\in K$ such that $g\Om_e^u=\Om_e^v$
by transitivity. Thus $\mu(\Om_e^u) = \mu(\Om_e^v)$. Since
$\Om$ is the union of $N_{m,n}$ disjoint sets $\Om_e^u$, $u\in V_e^{m,n}$
each of
which has equal measure we deduce that
\[
\mu(\Om_e^u)=\frac{c}{N_{m,n}}, \text{ for each } u\in V_e^{m,n},
\]
where $c=\mu(X_0)=\nu(X_0)>0$. Thus $\mu(\Om_e^u)=c\nu(\Om_e^u)$ for every
vertex $u\in \D$.

Since the sets $\Om_e^u$ generate the Borel $\s$-algebra, we deduce that
$\mu(X)=c\nu(X)$ for each Borel set $X$. Therefore
\begin{eqnarray*}
\nu(\Om\setminus X_0) &=& c^{-1}\mu(\Om\setminus X_0) \\
&=& c^{-1}\nu((\Om\setminus X_0) \cap X_0) = 0 ,
\end{eqnarray*}
thus proving ergodicity.
\end{proof}

The following result shows that we can assume the group $K\leq\aut(\Om)$ is
countable without any loss of generality.

\begin{lem}
Assume that $K\leq\aut(\Om)$ acts transitively on the collection of sets
\[
\left\{ \Om_e^x :  x\in V_e^{m,n} \right\}
\]
for every pair $(m,n)\in\NN\times\NN$. Then there is a countable subgroup $K_0$
of $K$ which also acts transitively on the collection of sets $\left\{
\Om_e^x :  x\in
V_e^{m,n}\right\}$ for every pair $(m,n)\in\NN\times\NN$.
\end{lem}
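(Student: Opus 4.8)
The plan is to exploit the fact that each set $V_e^{m,n}$ is finite, with $|V_e^{m,n}|=N_{m,n}$, while the index set $\NN\times\NN$ is itself countable. First I would fix, for every pair $(m,n)\in\NN\times\NN$ and every ordered pair of vertices $(u,v)\in V_e^{m,n}\times V_e^{m,n}$, a single element $g_{u,v}^{m,n}\in K$ satisfying $g_{u,v}^{m,n}\Om_e^u=\Om_e^v$. Such an element exists by the assumed transitivity of $K$ on the finite collection $\{\Om_e^x:x\in V_e^{m,n}\}$, and since only finitely many selections are needed for each $(m,n)$, the entire selection involves at most a countable family of choices.

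Next I would set
\[
G_0=\left\{ g_{u,v}^{m,n} : (m,n)\in\NN\times\NN,\ u,v\in V_e^{m,n}\right\}
\]
and let $K_0=\left< G_0\right>$ be the subgroup of $K$ generated by $G_0$. The set $G_0$ is a countable union, indexed by $\NN\times\NN$, of finite sets, hence is countable. Since every element of the subgroup generated by a countable set is a finite word in those generators and their inverses, and there are only countably many such words, $K_0$ is countable.

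Finally I would verify that $K_0$ retains the transitivity property. Given any $(m,n)\in\NN\times\NN$ and any $u,v\in V_e^{m,n}$, the generator $g_{u,v}^{m,n}\in K_0$ already satisfies $g_{u,v}^{m,n}\Om_e^u=\Om_e^v$, so $K_0$ acts transitively on $\{\Om_e^x:x\in V_e^{m,n}\}$ for every pair $(m,n)$, as required.

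I expect no serious obstacle here: the only point deserving any care is the countability of $K_0$, which reduces to the standard fact that the subgroup generated by a countable subset of any group is countable. Everything else is routine bookkeeping resting on the finiteness of each $V_e^{m,n}$ together with the countability of $\NN\times\NN$.
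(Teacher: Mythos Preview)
Your proposal is correct and follows essentially the same argument as the paper: choose one element of $K$ witnessing transitivity for each pair of vertices in each finite set $V_e^{m,n}$, observe that this yields a countable generating set, and take $K_0$ to be the subgroup it generates. Your write-up is in fact slightly more explicit than the paper's, since you spell out both why $K_0$ is countable and why it inherits the transitivity property.
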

\begin{proof}
For each pair $v,w\in V_e^{m,n}$, there exists an element $k\in K$ such that
$k\Om_e^u=\Om_e^v$. Choose one such element $k\in K$ and label it $k_{v,w}$.
Since $V_e^{m,n}$ is finite, there are a finite number of elements
$k_{v,w}\in K$ for
each $V_e^{m,n}$. There are countably many sets $V_e^{m,n}$, so the set
$\{ k_{v,w} : v,w\in V_e^{m,n} , m,n\geq 0\}$  is countable. Hence the group
\[
K_0=\left< k_{v,w} : v,w\in V_e^{m,n} , m,n\geq 0 \right>\leq K
\]
is countable and satisfies the required condition.
\end{proof}

\subsection{The Algebra $L^\infty(\Om)\rtimes\G$}

We aim to construct a subgroup $K\leq[\G]_0$ which acts ergodically on $\Om$
and use Proposition~\ref{ratio set of G} and Lemma~\ref{transitive K is
ergodic} to
prove that $L^\infty(\Om)\rtimes\G$ is a factor of type~$\tqs$. We begin with a
technical remark.

\begin{lem}\label{opposite chambers}
Given a fixed vertex $x\in\D_\cT$ and a fixed chamber $C$ with $x\in C$,
there are
precisely $q^3$ chambers $D$ with the property that $x\in D$ and for every
$\w\in\Om_x^D$, $S_x(\w)$ is opposite $S_x(\wa)$ for every $\wa\in\Om_x^C$.
\end{lem}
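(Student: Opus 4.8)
The plan is to reduce the \emph{a priori} global opposition condition to a purely local statement in the residue of $x$, and then to finish by a short incidence count in a projective plane. First I would recall that the base chamber of a representative sector $\cS_x(\w)$ is precisely the germ of that sector at $x$: it is the unique chamber of the residue (star) of $x$ lying in $\cS_x(\w)$. Two sectors based at the common vertex $x$ are opposite exactly when their germs at $x$ are opposite, i.e.\ when their base chambers are opposite chambers of the residue of $x$ (see~\cite{Brown,Ronan}). Since the residue of $x$ is a thick spherical building of type~$A_2$, hence the flag complex of a projective plane of order~$q$, this turns the lemma into a counting problem for opposite chambers in that plane.

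Next I would observe that the two quantifiers in the statement collapse. Every $\w\in\Om_x^D$ has the same base chamber, namely $D$, and every $\wa\in\Om_x^C$ has base chamber $C$; by the previous paragraph the relation ``$\cS_x(\w)$ is opposite $\cS_x(\wa)$'' depends only on the unordered pair of base chambers $\{C,D\}$. Hence the condition that $\cS_x(\w)$ be opposite $\cS_x(\wa)$ for every $\w\in\Om_x^D$ and every $\wa\in\Om_x^C$ holds if and only if $D$ is opposite $C$ in the residue of $x$. The problem is therefore to count the chambers $D\ni x$ whose germ is opposite that of $C$.

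It remains to carry out the count in the projective plane $(P,L)$ of order $q$ identified with the residue of $x$. Writing a chamber as an incident flag $(p,\ell)$ with $p\in P$, $\ell\in L$, the opposition involution of type~$A_2$ shows that $(p,\ell)$ is opposite the fixed flag $C=(p_0,\ell_0)$ precisely when $p$ does not lie on $\ell_0$ and $p_0$ does not lie on $\ell$. There are $(q^2+q+1)-(q+1)=q^2$ points $p$ not incident to $\ell_0$; for each such $p$ there are $q+1$ lines through $p$, of which exactly one also passes through $p_0$ (as $p\neq p_0$), leaving $q$ admissible choices of $\ell$. This yields $q^2\cdot q=q^3$ chambers $D$, as claimed.

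The main obstacle, and the step deserving the most care, is the reduction in the first paragraph: one must be sure that the opposition used in the statement is the local notion determined by germs at $x$, rather than global opposition of the boundary points $\w,\wa$ in $\D^\infty$. This is exactly what makes the universally quantified condition satisfiable, since global opposition would fail for most pairs with prescribed base chambers; so I would make the germ-level characterization explicit and justify that opposite germs correspond bijectively to opposite chambers of the spherical residue, after which the counting is routine.
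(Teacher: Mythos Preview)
Your proof is correct and follows essentially the same approach as the paper: both reduce the question to counting chambers opposite $C$ in the projective plane identified with the residue of $x$, and both obtain $q^2\cdot q=q^3$. You are more explicit than the paper about the reduction step (that opposition of sectors based at $x$ is equivalent to opposition of their base chambers), and your direct characterization of opposite flags as those with $p\notin\ell_0$ and $p_0\notin\ell$ is slightly cleaner than the paper's parametrization via an incidence hexagon, but the substance is identical.
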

\begin{proof}
Since the neighbours of any vertex can be identified with the
projective plane of order $q$ introduced in~\S\ref{intro to triangle groups} we
may use properties of projective planes to prove this result.

The chambers incident on $x$ correspond to point line pairs $\{ p,l\}$.
Suppose $C$
corresponds to $\{ p_1,l_1\}$. Then the chambers $D$ will correspond to
point line
pairs $\{p_2,l_2\}$ for which there exists an incidence diagram of the type
shown in
Figure~\ref{x on opposite chambers}.
\begin{figure}[htbp]
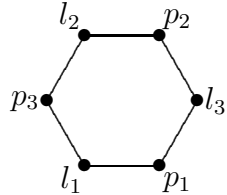
\label{x on opposite chambers}
{}\hfill
\beginpicture
\setcoordinatesystem units <0.5cm,0.866cm>    
\setplotarea x from -4 to 4, y from 4 to 8         
\put {$\bullet$} at -1 5
\put {$\bullet$} at   1 5
\put {$\bullet$} at   -2 6
\put {$\bullet$} at   2 6
\put {$\bullet$} at    -1 7
\put {$\bullet$} at   1 7
\put {$l_1$}  [t r] at   -1.05  5
\put {$p_1$}  [t l] at   1.1  4.9
\put {$l_2$}  [b r] at   -1.1  7.1
\put {$p_2$}  [b l] at   1.1  7.1
\put {$p_3$}  [r] at   -2.2  6
\put {$l_3$}   [l] at   2.2  6
\putrule from -1    5     to  1    5
\putrule from -1    7     to  1    7
\setlinear
\plot  -1 5   -2 6  -1 7  /
\plot    1 7  2 6  1 5 /
\endpicture
\hfill{}
\caption{Incidence diagram for point line pairs $\{ p_2,l_2\}$ corresponding to
chambers $D$}
\end{figure}
for some point $p_3$ and some line $l_3$.

Having fixed the pair $\{ l_1,p_1\}$ we can choose any point $p_2\not\in
l_1$, so
there are $(q^2+q+1)-(q+1)=q^2$  choices for $p_2$. Having chosen $p_2$,
the line
$l_3$ is then uniquely determined.

The number of possible choices for the line $l_2$ is then determined by the
number
of possible choices for the point $p_3$. The only restrictions on $p_3$ are that
$p_3\in l_1$ but $p_3\neq p_1$. So there are $(q+1)-1=q$ choices for $p_3$, and
hence for $l_2$.

Hence there are $q^2q=q^3$ possible pairs $\{ p_2,l_2\}$ satisfying the
necessary
conditions.
\end{proof}

We can now construct the ergodic subgroup of $[\G]_0$ provided $q\geq 3$.

\begin{prop}\label{groups K for Gamma}
If $q\geq 3$ there is a countable ergodic group $K\leq\aut(\Om)$ such that
$K\leq[\G]_0$.
\end{prop}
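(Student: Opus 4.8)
By Lemma~\ref{transitive K is ergodic} it is enough to produce a group $K\leq[\G]_0$ that acts transitively on $\{\Om_e^x:x\in V_e^{m,n}\}$ for every $(m,n)\in\NN\times\NN$; ergodicity then follows at once, and countability is arranged either by generating $K$ from the (countably many) exchanges described below, or by appealing to the preceding lemma. Since the sets $\Om_e^x$, $x\in V_e^{m,n}$, all have $\nu$-measure $1/N_{m,n}$, any transformation of $\Om$ permuting them is automatically $\nu$-preserving; the two genuine requirements are that our transformations lie in the full group $[\G]$ --- i.e.\ that they are implemented $\nu$-a.e.\ by elements of $\G$ --- and that together they act transitively at every level.

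The basic building block is a measure-preserving bijection $\Om_e^C\to\Om_e^D$ between two cones based at the same vertex $e$, realised as a ``change of base chamber keeping the tail fixed''. As in the free-group model underlying~\cite{RR}, such a map is built from elements of $\G$ on a countable Borel partition (the cancellations forcing a recursion), and it lies in $[\G]_0$ precisely when each local element $g\in\G$ has trivial module, $\frac{d\nu\circ g}{d\nu}\equiv 1$ on its piece; by~\cite[Lemma~2.5]{CMS} this integer-valued module is exactly what must be made to vanish. Lemma~\ref{opposite chambers} is the tool that makes this possible: when $D$ is one of the $q^3$ chambers \emph{opposite} to $C$, every sector of $\Om_e^D$ lies in a common apartment with every sector of $\Om_e^C$, and redirecting a sector from the $C$-direction to the $D$-direction becomes an operation within an apartment along which the horospherical height, and hence the module, is preserved. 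This is why opposition, rather than mere adjacency, is the relation one must route through.

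To make the generated group transitive on the base chambers I would connect any two chambers $C,C'$ through $e$ by a common opposite chamber $D$ and compose the exchanges $\Om_e^C\leftrightarrow\Om_e^D\leftrightarrow\Om_e^{C'}$. A common opposite exists by pigeonhole: each chamber has $q^3$ opposites among the $(q^2+q+1)(q+1)=q^3+2q^2+2q+1$ chambers through $e$, so $C$ and $C'$ share an opposite as soon as $2q^3>q^3+2q^2+2q+1$, that is $q^3-2q^2-2q-1>0$. This inequality holds precisely when $q\geq 3$ (it fails at $q=2$), which is exactly the hypothesis of the proposition. Performing the same construction at the deeper vertices $a_{i,j}$ inside each sector --- the local configuration at every vertex being identical --- then upgrades chamber-transitivity to transitivity on $\{\Om_e^x:x\in V_e^{m,n}\}$ for all $(m,n)$, so that Lemma~\ref{transitive K is ergodic} applies.

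The hard part is the simultaneous control of measure-preservation and transitivity. Measure-preservation forces the exchanges to be routed only through opposite configurations, so that the module vanishes, while transitivity demands that opposition alone suffice to connect every pair of chambers at every level; the tension between these is resolved by the counting inequality $q^3>2q^2+2q+1$, which is why $q\geq 3$ is essential and $q=2$ must be excluded. The remaining work --- checking that the ``change of base chamber'' maps patch into honest measure-preserving bijections of $\Om$ (handling the recursive cancellation set) and that the exchanges at successive levels are mutually compatible --- is routine once this opposition/pigeonhole framework is in place.
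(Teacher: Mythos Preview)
You have assembled the right ingredients --- Lemma~\ref{transitive K is ergodic}, Lemma~\ref{opposite chambers}, and the pigeonhole inequality $2q^3>(q+1)(q^2+q+1)$ for $q\geq 3$ --- but you deploy the pigeonhole in a way that differs from the paper and leaves a genuine gap. Your basic building block is an exchange $\Om_e^C\leftrightarrow\Om_e^D$ between two \emph{opposite} chambers at the \emph{same} vertex $e$, and you use the inequality to guarantee that any two chambers $C,C'$ at $e$ share a common opposite $D$. The problem is that you never say which element of $\G$ implements the swap $\Om_e^C\to\Om_e^D$. Your appeal to ``an operation within an apartment along which the horospherical height is preserved'' describes a simplicial symmetry of an apartment, not left multiplication by an element of $\G$; there is no reason such a reflection or rotation should lie in $[\G]$ at all. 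The analogy with~\cite{RR} is misleading here: in the tree case the swap is realised by explicit free-group elements, and it is exactly this explicitness that is missing from your sketch. Moreover, even granting chamber-transitivity at every vertex, it is not clear how the local swaps combine to give transitivity on $\{\Om_e^x:x\in V_e^{m,n}\}$, since those sets are indexed by \emph{distinct} vertices, not by chambers at a single vertex.

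The paper's use of the same inequality is different and produces an explicit $\G$-element at each step. Given $x,y\in V_e^{m,n}$ and chambers $C\ni x$, $D\ni y$ with $\Om_x^C=\Om_e^C$, $\Om_y^D=\Om_e^D$, one passes one level \emph{deeper} to vertices $x_1,y_1\in V_e^{m+1,n+1}$. Lemma~\ref{opposite chambers} is applied there: there are $q^3$ chambers $C_1$ through $x_1$ opposite the incoming chamber, and likewise $q^3$ chambers $D_1$ through $y_1$. The pigeonhole is now applied not to find a common opposite, but to the \emph{translates} $x_1^{-1}C_1$ and $y_1^{-1}D_1$, each of which lies among the $\alpha=(q+1)(q^2+q+1)$ chambers at $e$; since $2q^3>\alpha$ one can choose $C_1,D_1$ with $x_1^{-1}C_1=y_1^{-1}D_1$. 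Then $g=y_1x_1^{-1}\in\G$ sends $\Om_{x_1}^{C_1}=\Om_e^{C_1}$ bijectively onto $\Om_{y_1}^{D_1}=\Om_e^{D_1}$, and this is measure-preserving because both sets lie at the same level and the Radon--Nikodym derivative is locally constant. This defines $k$ on a proportion $1/\alpha$ of $\Om_x^C$; one recurses on the remainder, so that after $n$ steps only a proportion $((\alpha-1)/\alpha)^n\to 0$ is unaccounted for. The recursion you allude to is thus present, but it serves to \emph{exhaust} $\Om_x^C$, not to handle cancellations in a single swap.
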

\begin{proof}
Let $x,y\in V_e^{m,n}$. We construct a measure preserving automorphism $k_{x,y}$
of $\Om$ such that
\begin{enumerate}
\item $k_{x,y}$ is almost everywhere a bijection from $\Om^x$ onto $\Om^y$,
\item $k_{x,y}$ is the identity on $\Om\setminus(\Om^x\cup\Om^y)$.
\end{enumerate}
It then follows from Lemma~\ref{transitive K is ergodic} that the group
\[
K=\left< k_{x,y} : \{x,y\}\subseteq V_e^{m,n}, m,n\in\NN \right>
\]
acts ergodically on $\Om$ and the construction will show explicitly that
$K\leq [\G]_0$.

Recall from \S\ref{measure space omega} that the Borel $\s$-algebra is
generated by sets of the form $\Om_v^u$ and that such a set is the disjoint
union
of open sets of the from $\Om_v^C$ for some chamber $C$ with $v\in C$. It is
therefore enough to show that, for every $x,y\in V_e^{m,n}$ and chambers
$C$ and $D$ satisfying $x\in C$, $y\in D$, $\Om_x^C=\Om_e^C$ and
$\Om_y^D=\Om_e^D$,  there exists an almost everywhere bijection
\[
k:\Om_x^{C} \longrightarrow \Om_y^{D}
\]
which is measure preserving and is pointwise approximable by the action of $\G$
almost everywhere on $\Om_x^C$.

As depicted in Figure~\ref{x,C,x1 and C1} the set $\Om_{x}^{C}$ is a
disjoint union
of sets of the form $\Om_{x_1}^{C_1}$ where $x_1\in V_e^{m+1,n+1}$ and
$\Om_{x_1}^{C_1}=\Om_e^{C_1}$.
\refstepcounter{picture}
\begin{figure}[htbp]
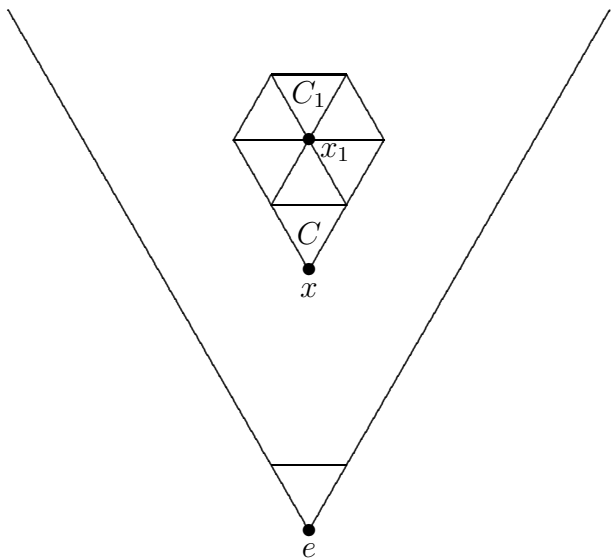
\label{x,C,x1 and C1}
{}\hfill
\beginpicture
\setcoordinatesystem units <0.5cm,0.866cm>    
\setplotarea x from -8 to 8, y from -1 to 8         
\put {$\bullet$} at 0 0
\put {$\bullet$} at 0 4
\put {$\bullet$} at 0 6
\put {$e$}           [t]  at    0  -0.2
\put {$x$}            [t] at    0  3.8
\put {$x_1$}       [t l] at   0.3  5.95
\put {$C$}           [b]  at    0  4.4
\put {$C_1$}       [b]  at    0  6.5
\putrule from -1    1     to  1    1
\putrule from -1    5     to  1    5
\putrule from -2    6     to  2    6
\putrule from -1    7     to  1    7
\setlinear
\plot -8  8   0 0   8 8 /
\plot    0 4   -1 5   0 6   1 5   0 4 /
\plot  -1 5   -2 6  -1 7  0 6  1 7  2 6  1 5 /
\endpicture
\hfill{}
\caption{Relative positions of $x$, $x_1$, $C$ and $C_1$.}
\end{figure}
Similarly the set $\Om_{y}^{D}$ is a disjoint union of sets of the form
$\Om_{y_1}^{D_1}$ where $y_1\in V_e^{m+1,n+1}$ and
$\Om_{y_1}^{D_1}=\Om_e^{D_1}$.

Fix two such vertices $x_1,y_1$. By Lemma~\ref{opposite chambers} there are
$q^3$ possible choices for each of the chambers $C_1$ and $D_1$.

Now $x_1^{-1}C_1$ and $y_1^{-1}D_1$ are each one of the $\alpha=(q+1)(q^2+q+1)$
chambers based at $e$. Therefore, since $2q^3 > \alpha$ for $q\geq 3$, we can
choose chambers $C_1$ and $D_1$ such that  $x_1^{-1}C_1= y_1^{-1}D_1$,
i.e.\ such
that $y_1x_1^{-1}C_1= D_1$. Define $k$ from $\Om_{x_1}^{C_1}$ onto
$\Om_{y_1}^{D_1}$ by
\[
k\w= y_1x_1^{-1}\w \text{ for }\w\in \Om_{x_1}^{C_1}.
\]
Thus the action of $k$ is measure preserving and pointwise approximable by $\G$
on $\Om_{x_1}^{C_1}$. Therefore $k$  remains undefined on a proportion
$\frac{\alpha-1}{\alpha}$ of $\Om_{x}^{C}$.

Now repeat the process on each of the pairs of sets
$\Om_{x_1}^{C_1},\Om_{y_1}^{D_1}$ for which $k$ has not been defined. As before,
$k$ can be defined everywhere except on a proportion
$\frac{\alpha-1}{\alpha}$ of
each such set, and $k$ can therefore be defined everywhere except on a
proportion
$\left( \frac{\alpha-1}{\alpha}\right)^2$ of the original set $\Om_{x}^{C}$.

Continuing in this manner we find that at the $n$th step, $k$ has been defined
everywhere except on a proportion $\left( \frac{\alpha-1}{\alpha}\right)^n$ of
$\Om_{x}^{C}$. Since
\[
\left( \frac{\alpha-1}{\alpha}\right)^n \rightarrow 0 \text{ as } n\rightarrow\infty,
\]
$k$ is defined almost everywhere on $\Om_{x}^{C}$ and satisfies the required
properties.
\end{proof}

The above considerations lead us to the following conclusion.

\begin{thm}\label{main theorem for Gamma}
The von Neumann algebra $L^\infty(\Om)\rtimes\G$  is a hyperfinite factor.

Moreover, if $q\geq 3$ the action of $\G$ on $\Om$ is of type~$\tqs$ and so
$L^\infty(\Om)\rtimes\G$ is the hyperfinite factor of type~$\tqs$.
\end{thm}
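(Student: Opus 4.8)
The plan is to assemble the machinery built up in the preceding sections; the theorem is really a packaging result, so the proof amounts to a short verification. The first assertion, that $L^\infty(\Om)\rtimes\G$ is a hyperfinite factor, is nothing more than a restatement of Corollary~\ref{hyperfinite}, so no further work is needed there.

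For the type classification under the hypothesis $q\geq 3$, I would check that $G=\G$ together with a suitable generator and the group $K$ from Proposition~\ref{groups K for Gamma} meet all the hypotheses of Proposition~\ref{ratio set of G}. First, $\G$ acts on $\D$ by type-rotating automorphisms and is countable, so $\G\leq\traut$. Second, any generator $a_x$ satisfies $d(a_xe,e)=d(a_x,e)=1$, since generators label single edges; this provides the required element $g\in\G$ with $d(ge,e)=1$. Third, and this is where the restriction $q\geq 3$ is used, Proposition~\ref{groups K for Gamma} furnishes a countable subgroup $K\leq[\G]_0$ whose action on $\Om$ is ergodic. With these three ingredients in place, Proposition~\ref{ratio set of G} yields the ratio set $r(\G)=\{q^{2n}:n\in\ZZ\}\cup\{0\}$, which is precisely the statement that the action of $\G$ on $\Om$ is of type~$\tqs$.

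To pass from the ratio-set computation to the conclusion about $L^\infty(\Om)\rtimes\G$, I would invoke Corollary~\ref{factor if free and ergodic}. Its one additional hypothesis, freeness of the action, is exactly what Proposition~\ref{measure-theoretic freeness} provides (measure-theoretic freeness being what is required in the von Neumann algebraic setting). All hypotheses of Corollary~\ref{factor if free and ergodic} are thus satisfied, so $L^\infty(\Om)\rtimes\G$ is a factor of type~$\tqs$; being also hyperfinite by the first part, it is \emph{the} hyperfinite factor of type~$\tqs$ by the uniqueness of the injective factor of a given type.

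I do not expect a genuine obstacle inside this proof: the entire analytic difficulty has been discharged in the earlier results. The single substantive step is the existence of an ergodic $K\leq[\G]_0$, and that was settled in Proposition~\ref{groups K for Gamma}, where the hypothesis $q\geq 3$ is indispensable because the counting inequality $2q^3>(q+1)(q^2+q+1)$ used there fails when $q=2$. Everything else is a matter of citing the correct result and confirming that $\G$ is type-rotating with a generator at distance $1$ from $e$.
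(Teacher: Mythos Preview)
Your proposal is correct and follows the same route as the paper's proof, which simply cites Corollaries~\ref{factor} and~\ref{hyperfinite} for the first assertion and Propositions~\ref{transitive K is ergodic} and~\ref{groups K for Gamma} for the type classification. In fact you are more explicit than the paper: you spell out the intermediate appeals to Proposition~\ref{ratio set of G} and Corollary~\ref{factor if free and ergodic} (together with the freeness from Proposition~\ref{measure-theoretic freeness}) that the paper's terse proof leaves implicit.
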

\begin{proof}
The von Neumann algebra $L^\infty(\Om)\rtimes\G$ is a hyperfinite factor by
Corollary~{factor} and Corollary~{hyperfinite}.

If $q\geq 3$, then Propositions~\ref{transitive K is ergodic}
and~\ref{groups K for
Gamma} prove that the factor is of type~$\tqs$.
\end{proof}

\begin{remark}
We believe that the result is also true when $q=2$, but the proof of
Proposition~\ref{groups K for Gamma} in that case appears to be harder.
\end{remark}

\subsection{Algebras From Classical Groups}\label{section 5}

We now restrict our attention to the triangle buildings associated to certain
linear groups. Henceforth, let $\cG=\pgl(3,\FF)$ for $\FF$ a local field with a
discrete valuation and let $\cK=\pgl(3,\cO)$ where $\cO$ is the valuation
ring of
$\FF$. Let $\D$ be the triangle building associated to
$\cG$~\cite[Chapter~9]{Ronan} and $\Om$ its boundary. Then $\cK$ satisfies the
conditions of Lemma~\ref{transitive K is ergodic} by the remark
following~\cite[Proposition~4.2]{CMS}.

\pagebreak[2]
\begin{prop}\label{free curly G}
$\cG$ acts freely on $\Om$.
\end{prop}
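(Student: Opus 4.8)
The plan is to exploit the classical dictionary between the boundary of the Bruhat--Tits building of $\cG=\pgl(3,\FF)$ and the variety of complete flags in $\FF^3$, and then to show that for each nontrivial $g$ the boundary points fixed by $g$ form a proper subvariety, which is $\nu$-null. Concretely, $\Om$ is the set of chambers of the spherical building at infinity, and for type $\tilde A_2$ these are exactly the complete flags $V_1\subset V_2\subset\FF^3$ with $\dim V_i=i$ (see~\cite[Chapter~9]{Ronan}). Under this identification the action of $\cG$ on $\Om$ is the natural linear action of $\pgl(3,\FF)$ on flags. Writing $\Om_g=\{\w\in\Om:g\w=\w\}$, the assertion of freeness is that $\nu(\Om_g)=0$ for every $g\in\cG\setminus\{e\}$.

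Fix such a $g$ and choose a lift $\tilde g\in\gl(3,\FF)$; since $g\neq e$ in $\pgl$, the matrix $\tilde g$ is not scalar. A flag $(V_1,V_2)$ lies in $\Om_g$ precisely when $\tilde g V_1=V_1$ and $\tilde g V_2=V_2$, and these are Zariski-closed conditions, so $\Om_g$ is a closed subvariety of the flag variety $\cF$, which is irreducible of dimension $3$. The key algebraic observation is that $\Om_g$ cannot be all of $\cF$: if it were, then every line $V_1\subset\FF^3$ (each of which occurs as the first term of some flag) would be $\tilde g$-invariant, and a linear operator fixing every line is scalar, forcing $g=e$. Hence $\Om_g$ is a \emph{proper} closed subvariety of $\cF$, of dimension at most $2$.

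It remains to see that a proper subvariety of $\cF$ is $\nu$-null. Here $\nu=\nu_e$ is invariant under $\cK=\pgl(3,\cO)=\operatorname{Stab}_{\cG}(e)$, because $\nu_e$ is defined purely from the combinatorial data relative to the vertex $e$, which $\cK$ preserves. By the Iwasawa decomposition $\cG=\cK B$, with $B$ a Borel subgroup, $\cK$ acts transitively on $\cF=\cG/B$, so $\nu$ is \emph{the} $\cK$-invariant probability measure on $\cF$, namely the smooth invariant class. On the open dense big Bruhat cell this class is mutually absolutely continuous with Haar measure on an affine space, on which any proper subvariety (the zero set of a nonzero polynomial) is null; covering $\cF$ by such cells gives $\nu(\Om_g)=0$.

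The main obstacle is exactly this last measure-theoretic step: verifying that the combinatorially defined measure $\nu$ coincides with the smooth $\cK$-invariant measure on the flag variety, and hence annihilates every proper subvariety. The algebraic reduction to ``$\tilde g$ non-scalar $\Rightarrow$ proper fixed locus'' is formal, and the $\cK$-invariance of $\nu$ together with transitivity of $\cK$ on $\cF$ pins down $\nu$ uniquely; the only point requiring care is the absolute-continuity statement that identifies null sets with Haar-null sets in the Bruhat charts, which is what converts ``lower-dimensional'' into ``measure zero''.
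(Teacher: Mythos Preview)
Your argument is correct and follows the same overall strategy as the paper: identify $\Om$ with the homogeneous space $\cG/\cP$ (equivalently, the flag variety), observe that the fixed set of a nontrivial $g$ is cut out by nontrivial polynomial conditions, and conclude it has $\nu$-measure zero. The difference lies in how the measure-zero step is executed. You stay on the flag variety, invoke $\cK$-invariance and Iwasawa to pin down $\nu$ as the smooth invariant measure, and then argue via Bruhat cells that proper subvarieties are null. The paper instead pulls everything back to $\cG$ using the general fact (Bourbaki, \cite[VII,~\S2,~Th\'eor\`eme~1]{B78}) that for a closed subgroup $\cP\leq\cG$ the quasi-invariant measure on $\cG/\cP$ satisfies $\mu_{\cG/\cP}(Y)=0$ iff $\mu_{\cG}(\pi^{-1}(Y))=0$; the condition $h^{-1}gh\in\cP$ is then visibly polynomial in the matrix entries of a lift of $h$ in $\gl(3,\FF)$, and one appeals directly to the fact that the zero set of a nonzero polynomial has Haar measure zero in $\FF^9$ together with the explicit form of Haar measure on $\gl(3,\FF)$.

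The paper's route is slightly more economical because it sidesteps any chart-by-chart analysis on $\cG/\cP$ and any discussion of how $\nu$ sits relative to Haar measure on the big cell; the pullback-to-$\cG$ trick converts the problem into a single polynomial-vanishing statement on affine space. Your route has the advantage of making the geometry of the fixed locus on the flag variety transparent, but the step you yourself flag as the ``main obstacle''---identifying $\nu$-null sets with Haar-null sets in Bruhat coordinates---is precisely what the paper's pullback manoeuvre dispatches in one line.
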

\begin{proof}
Denote by $\cP\leq\cG$ the group of upper triangular matrices in $\cG$.
By~\cite[Proposition~VI.9F]{Brown}, $\Om$ is isomorphic to $\cG/\cP$ as a
topological $\cG$-space. Moreover, $\nu$ corresponds to the unique
quasi-invariant measure on $\cG/\cP$.

Note that if $\cF$ is a closed subgroup of $\cG$ then the quasi-invariant
measure
$\mu_{\cG/\cF}$ on $\cG/\cF$ has the property that $\mu_{\cG/\cF}(Y)=0$ if and
only if $\mu_{\cG}(\pi^{-1}(Y))=0$ where $\mu_{\cG}$ denotes left Haar
measure on
$\cG$ and $\pi$ denotes  the quotient map $\cG\rightarrow \cG/\cF$
(see~\cite[VII,~\S2,~Th\'eor\`eme~1]{B78}).

Let $g\in\cG\setminus\{ 0\}$. We must show that
$\nu\left(\{\w\in\Om : g\w=\w\}\right) = 0$. That is to say we must show
\[
\mu_{\cG/\cP}\{ h\cP : h^{-1}gh\in\cP\} =0,
\]
or, equivalently, that
\[
\mu_{\cG}\{ h\in\cG : h^{-1}gh\in\cP\} =0.
\]
The condition $h^{-1}gh\in\cP$ means that any representative of $h$ in
$\gl(3,\FF)$
lies in the zero set of some nonzero polynomial
$\psi\in\FF[X_1,\ldots,X_q]$. The
zero set of $\psi$ in $\FF^q$ has measure zero, relative to the usual Haar
measure~\cite[VII,~Lemme~9]{B78}. The assertion follows from the explicit
expression for the Haar measure in
$\gl(3,\FF)$~\cite[VII,~\S3~$n^o$1,~Exemple~1]{B78}.
\end{proof}

\begin{thm}\label{PGLFq factor}
\begin{enumerate}

\item Let $q$ be a prime power and let $\Om$ be the boundary of the
building associated to $\pgl\left(3,\FF_q\left( (X)\right)\right)$. Then
$L^\infty(\Om)\rtimes\pgl\left(3,\FF_q(X)\right)$ is a factor of type~$\tqs$.

\item Let $p$ be a prime and $\Om$ the boundary of the building associated to
$\pgl(3,\QQ_p)$.  Let $\cR$ denote either $\ZZ\left[\frac{1}{p}\right]$ or
$\QQ$.
Then $L^\infty(\Om)\rtimes\pgl\left(3,\cR\right)$
is a factor of type~$\tps$.
\end{enumerate}
\end{thm}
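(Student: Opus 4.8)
The plan is to deduce both statements from Corollary~\ref{factor if free and ergodic}, applied with $G=\pgl(3,\FF_q(X))$ in case~(1) and $G=\pgl(3,\cR)$ in case~(2). In each case $G$ is a countable subgroup of $\cG=\pgl(3,\FF)$ (with $\FF=\FF_q((X))$ or $\FF=\QQ_p$), acting on $\D$ by type-rotating automorphisms, so $G\leq\traut$. Freeness of the $G$-action is immediate from Proposition~\ref{free curly G}: since $\cG$ acts freely on $\Om$ and $G\leq\cG$, every $g\in G\setminus\{e\}$ fixes only a $\nu$-null set. The required element $g$ with $d(ge,e)=1$ is the class of $\mathrm{diag}(\pi,1,1)$, where $\pi$ is the uniformizer $X$ (resp.\ $p$); this lies in $G$ because $X\in\FF_q(X)$ (resp.\ $p\in\cR$), it is type-rotating since $\det$ has valuation $1$, and it carries the base vertex $[\cO^3]$ to an adjacent one.

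It remains to exhibit a countable ergodic subgroup $K\leq[G]_0$, which is where the work lies. Let $\cO$ be the valuation ring ($\FF_q[[X]]$ resp.\ $\ZZ_p$), let $\cK=\pgl(3,\cO)$, and let $A$ be the coordinate/integer subring $\FF_q[X]$ (resp.\ $\ZZ$), so that $A$ is contained in both $\cR$ (resp.\ $\FF_q(X)$) and $\cO$. I take $K$ to be the image of $\mathrm{SL}(3,A)$ in $\cG$. Since $A\subseteq\cR$ we have $K\leq G$, and since $A\subseteq\cO$ we have $K\leq\cK$; as $\cK$ fixes the base vertex it preserves $\nu$, whence $K\leq[G]_0$. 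The point of using $\mathrm{SL}(3,A)$ rather than $G\cap\cK$ directly is uniformity: the units of $\cR$ can be too sparse (for $\cR=\ZZ\left[\frac1p\right]$ they are only $\{\pm p^k\}$) to guarantee density of $G\cap\cK$ in $\cK$ by naive truncation, whereas passing to $\mathrm{SL}$ sidesteps this.

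The key claim is that $K$ acts ergodically. By the Iwasawa decomposition $\mathrm{SL}(3,\FF)=\mathrm{SL}(3,\cO)\,B$, with $B$ the upper-triangular flag-stabilizing subgroup, the compact group $\mathrm{SL}(3,\cO)$ already acts transitively on the space of complete flags in $\FF^3$, which is exactly $\Om\cong\cG/\cP$; hence its image $\bar H$ in $\cG$ acts transitively, and therefore ergodically, on $(\Om,\nu)$. By strong approximation the reduction maps $\mathrm{SL}(3,A)\to\mathrm{SL}(3,\cO/\pi^n\cO)$ are surjective, so $\mathrm{SL}(3,A)$ is dense in $\mathrm{SL}(3,\cO)$ and $K$ is dense in $\bar H$. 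Finally, because $\bar H$ is profinite and acts continuously on the compact space $\Om$ preserving $\nu$, its representation on $L^2(\Om,\nu)$ is strongly continuous; the fixed vectors of $K$ and of $\bar H$ therefore coincide, so every $K$-invariant Borel set is $\bar H$-invariant and hence $\nu$-null or $\nu$-conull. Thus $K$ is ergodic.

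With all hypotheses of Proposition~\ref{ratio set of G} verified and the action free, Corollary~\ref{factor if free and ergodic} shows that $L^\infty(\Om)\rtimes G$ is a factor whose type is governed by the order of $\D$, namely the cardinality of the residue field: $q$ in case~(1), giving type~$\tqs$, and $p$ in case~(2), giving type~$\tps$. I expect the main obstacle to be precisely the ergodicity of the countable group $K$ --- the interplay of strong approximation (density of $\mathrm{SL}(3,A)$ in $\mathrm{SL}(3,\cO)$, which is what rescues the $\cR=\ZZ\left[\frac1p\right]$ case) with the transitivity of $\mathrm{SL}(3,\cO)$ on the flag variety. Note that, unlike Proposition~\ref{groups K for Gamma}, this route imposes no lower bound on $q$, consistent with the absence of a $q\geq3$ hypothesis here.
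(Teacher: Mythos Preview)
Your argument is correct and follows the same scheme as the paper: apply Corollary~\ref{factor if free and ergodic}, with freeness supplied by Proposition~\ref{free curly G}, the element represented by $\mathrm{diag}(\pi,1,1)$ giving $d(ge,e)=1$, and an arithmetic subgroup of $\cK=\pgl(3,\cO)$ serving as the ergodic measure-preserving $K\leq[G]_0$. The only differences are in the implementation of this last step. The paper takes $K=\pgl(3,A)$ with $A=\FF_q[X]$ or $\ZZ$, asserts that $K$ is dense in $K'=\pgl(3,\cO)$, and uses continuity of the $K'$-action to deduce that $K$ is transitive on each finite set $V_e^{m,n}$, whence ergodicity by Lemma~\ref{transitive K is ergodic}. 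You instead take $K$ to be the image of $\mathrm{SL}(3,A)$ (which in the $p$-adic case is literally the same group, since $-I$ has determinant $-1$ in odd dimension), establish density in $\mathrm{SL}(3,\cO)$ via surjectivity onto each $\mathrm{SL}(3,\cO/\pi^n\cO)$, and conclude ergodicity directly from Iwasawa transitivity of $\mathrm{SL}(3,\cO)$ on the flag variety together with strong continuity of the unitary representation on $L^2(\Om)$. Your route buys a cleaner density statement (strong approximation for $\mathrm{SL}$ avoids the cokernel $\cO^*/(\cO^*)^3$ that complicates the $\pgl$ statement) and bypasses Lemma~\ref{transitive K is ergodic}; the paper's route stays within the combinatorial framework already set up. The two are essentially equivalent.
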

\begin{proof}
(1) We apply Proposition~\ref{ratio set of G} with
$G=\pgl\left(3,\FF_q(X)\right)$
and $K=\pgl\left(3,\FF_q[X]\right)$. The action of $G$ is free by
Proposition~\ref{free curly G}. We must check that the action of $K$ is ergodic.
Let $K'=\pgl\left(3,\FF_q\left[ [X]\right]\right)$. Then $K'$ acts
transitively on
each set $V_e^{m,n}$ by~\cite[remark following Proposition~4.2]{CMS}. Also
$K$ is
dense in $K'$ and the action of $K'$ on $\Om$ is continuous. It follows from the
definition of the topology on $\Om$ that $K$ also acts transitively on each
$V_e^{m,n}$. Thus Lemma~\ref{transitive K is ergodic} ensures that $K$ acts
ergodically on $\Om$. Since the action of $G$ is free and ergodic the crossed
product $L^\infty(\Om)\rtimes G$ is therefore a factor.

In order to verify the remaining hypothesis of Proposition~\ref{ratio set
of G} ,
suppose that the base vertex $e$ of $\D$ is represented by the lattice class of
$<e_1,e_2,e_3>$. Then the lattice class of $<Xe_1,e_2,e_3>$ represents a
neighbouring vertex, so the element $g\in G$ represented by the matrix
$\left(\begin{array}{ccc}X&0&0\\0&1&0\\0&0&1\end{array}\right)$ satisfies
$d(ge,e)=1$.

The result now follows from Proposition~\ref{ratio set of G}.

(2)  The  proof is analogous to that given for the first statement but with
$K=\pgl(3,\ZZ)$ and $G=\pgl\left(3,\cR\right)$.
\end{proof}

\begin{remarks}
\begin{enumerate}
\item  We note that since the action of $\pgl(3,\ZZ)$ in
Theorem~\ref{PGLFq factor} is measure-preserving, it follows that
$L^\infty(\Om)\rtimes\pgl\left(3,\ZZ\right)$ is a factor of
type~${\text{\rm{II}}}_{1}$.

\item The real analogue of Theorem~\ref{PGLFq factor} is known. Take
$G=\pgl(n,\RR)$ with $n>1$ and $\Om= G/P$ where $P$ is the group of upper
triangular matrices in $G$. It then follows from a result of D.~Sullivan and
R.~Zimmer (see~\cite{Spa}) that
$L^\infty(\Om)\rtimes\pgl\left(n,\QQ\right)$ is a
factor of type~${\text{\rm{III}}}_{1}$.

\item The building associated to $\pgl\left(2,\FF_q\left(
(X)\right)\right)$ is a
homogeneous tree of degree $q+1$. In this case the corresponding factor is of
type~${\text{\rm{III}}}_{1/q}$. The building associated to
$\pgl\left(n,\FF_q\left(
(X)\right)\right)$ is a simplicial complex of rank $n-1$ and degree $q+1$.
Preliminary investigations indicate  that a similar construction will yield a
factor of type~${\text{\rm{III}}}_{1/q}$ if $n$ is odd and~$\tqs$ if $n$ is
even.

\end{enumerate}
\end{remarks}

\end{document}